\title{Analysis and Control of Input-Affine Dynamical Systems \\ using Infinite-Dimensional Robust Counterparts}
\renewcommand\footnotemark{}
\author{Jared Miller$^{1, 2}$, Mario Sznaier$^2$
\thanks{$^1$J. Miller is with the Automatic Control Laboratory (IfA), Department of Information Technology and Electrical Engineering (D-ITET), ETH Z\"{u}rich, Phisykstrasse 3, 8092, Z\"{u}rich, Switzerland (e-mail: jarmiller@control.ee.ethz.ch).}
\thanks{$^2$J. Miller and M. Sznaier are with the Robust Systems Lab,  ECE Department, Northeastern University, Boston, MA 02115. (e-mails: msznaier@coe.neu.edu).}
\thanks{ J. Miller and M. Sznaier were partially supported by National Science Foundation (NSF) grants  CNS--1646121, CMMI--1638234, ECCS--1808381 and CNS--2038493,  and Air Force Office of Scientific Research (AFOSR) grant FA9550-19-1-0005.  J. Miller was partially supported by the Swiss National Science Foundation Grant 200021\_178890 (NCCR: Automation), and by the Chateaubriand Fellowship of the Office for Science \& Technology of the Embassy of France in the United States.
}
}
\definecolor{dkgreen}{rgb}{0,0.4,0}
\definecolor{dkred}{rgb}{0.7,0,0}
\newcommand{\R}{\mathbb{R}} 
\newcommand{\K}{\mathbb{K}} 
\newcommand{\N}{\mathbb{N}}
\newcommand{\Dc}{\mathcal{D}}
\newcommand{\Lie}{\mathcal{L}} 
\newcommand{\Z}{\mathbb{Z}}
\newcommand{\M}{\mathbb{M}}
\newcommand{\0}{\mathbf{0}}
\newcommand{\1}{\mathbf{1}}
\newcommand{\psd}{\mathbb{S}}
\newcommand{\scL}{\mathscr{L}}
\newcommand{\bbm}{\mathbf{m}}
\DeclarePairedDelimiter{\abs}{\lvert}{\rvert}
\DeclarePairedDelimiter{\norm}{\lVert}{\rVert}
\DeclarePairedDelimiter{\floor}{\lfloor}{\rfloor}
\DeclarePairedDelimiter{\Tr}{\textrm{Tr}(}{)}
\DeclarePairedDelimiter{\rank}{\textrm{rank}(}{)}
\DeclarePairedDelimiter{\diag}{\textrm{diag}(}{)}
\DeclarePairedDelimiterX{\inp}[2]{\langle}{\rangle}{#1, #2}
\DeclarePairedDelimiter{\supp}{\textrm{supp}(}{)}
\DeclarePairedDelimiter{\Mp}{\mathcal{M}_+(}{)}
\newtheorem{thm}{Theorem}[section]
\newtheorem{lem}[thm]{Lemma}
\newtheorem{prop}[thm]{Proposition}
\newtheorem{proposition}[thm]{Proposition}
\newtheorem{cor}{Corollary}
\newtheorem{defn}{Definition}[section]
\newtheorem{rmk}{Remark} 
\newcommand{\bpf}{\begin{proof}}
\newcommand{\epf}{\end{proof}}
\begin{document}

\maketitle
\begin{abstract}
Input-affine dynamical systems often arise in control and modeling scenarios, such as the data-driven case when state-derivative observations are recorded under bounded noise.
Common tasks in system analysis and control include optimal control, peak estimation, reachable set estimation, and maximum control invariant set estimation. Existing work poses these types of problems as infinite-dimensional linear programs in auxiliary functions with sum-of-squares tightenings. The bottleneck in most of these programs is the Lie derivative nonnegativity constraint posed over the time-state-control set. 
Decomposition techniques to improve tractability by eliminating the control variables include  vertex decompositions (switching), or  facial decompositions in the case where the polytopic set is a scaled box. This work extends the box-facial decomposition technique to allow for a robust-counterpart decomposition of semidefinite representable sets (e.g. polytopes, ellipsoids, and projections of spectahedra). These robust counterparts are proven to be equivalent to the original Lie constraint under mild compactness and regularity constraints.
Efficacy is demonstrated under peak/distance/reachable set data-driven analysis problems and Region of Attraction maximizing control. 
\end{abstract}

\section{Introduction}

This paper will focus on analysis of an input-affine and continuous-time dynamical system under an input process $w(t)$ with
\begin{equation}
\label{eq:disturbance_affine}
    \dot{x}(t) = f(t, x(t), w(t)) = f_0(t,x) + \textstyle\sum_{\ell=1}^L w_\ell(t) f_\ell(t,x),
\end{equation}
in which the state $x \in X \subset \R^n$ and the input  $w \in W \subset \R^L$ are assumed to lie in compact sets.
The time horizon $t \in [0, T]$ is finite for convergence purposes. It is further required that the set $W$ is an $L$-dimensional compact \ac{SDR} set (a projection of a spectahedron) with non-empty interior \cite{helton2010semidefinite}. 

\Iac{SDR} set could arise from a sequence of observations of $\dot{x}(t)$ as corrupted by bounded noise. An example of such \iac{SDR} set $W$ is the $L$-dimensional polytope described by $m$ constraints (up to $m$ faces), which may be expressed as
\begin{align}
\label{eq:w_set}
    W &= \{w \mid A w \leq b\} & A \in \R^{m \times L}, \ b \in \R^m.
\end{align}

Letting $x_0 \in X_0$ be an initial condition and $w(t)$ be a control with $w(t) \in W \ \forall t \in [0, T]$ and $T$ finite, the state obtained by following dynamics in \eqref{eq:disturbance_affine} is
\begin{equation}
    x(t) = x(t \mid x_0, w(\cdot)).
\end{equation}

The process $w(t)$ is well-defined for each value of $t \in [0, T]$, but is not required to be continuous.

The problem instances that will be addressed in this paper are peak estimation, distance estimation, reachable set estimation, and \ac{ROA} maximization. Each problem instance may be cast as an infinite-dimensional \ac{LP} and approximated through the moment-\ac{SOS} hierarchy. They each have a Lie derivative nonnegativity constraint that usually induces the largest PSD matrix by numerical solvers. Such a constraint may be split using infinite-dimensional robust counterparts \cite{ben2009robust} into smaller \ac{PSD} matrix constraints using convex duality \cite{boyd2004convex} and a theorem of alternatives  \cite{ben2015deriving}.
Decomposition of \ac{SDR} sets $W$ move beyond the previously considered box cases in \cite{majumdar2014convex} \cite{korda2015controller} and polytope case in \cite{miller2021facial_short}.

Peak estimation finds an initial condition $x_0$ and input $w$ that maximizes the instantaneous value of a state function $p(x(t))$ along a trajectory \cite{cho2002linear}. Distance estimation is a variation of peak estimation that finds the distance of closest approach between points along trajectories $x(t \mid x_0, w)$ and an unsafe set $X_u$ \cite{miller2021distance}.
Reachable set estimation identifies the set of points $X_T$ such that there exists a pair $x_0, w$ where $x(T \mid x_0, w) \in X_T$ \cite{aubin2011viability}. Peak and reachable set estimation under input-affine and \ac{SDR} constraints may arise from the data-driven setting where state-derivative observations $\mathcal{D} = \{(t_k, x_k, {y}_k)\}_{k=1}^{N_s}$ are available subject to an $L_\infty,$ $L_2$, or semidefinite-bounded noise process $\eta$ ($y(t_k) \doteq \dot{x}(t_k)+\eta_k$).  As an example,  $L_\infty$-bounded noise could arise from propagating finite-difference errors from  when estimate $\dot{x}$.


Basin/\ac{ROA} estimation problems based on \ac{BRS} approaches have similar principles as forward Reachable set estimation problems. \cite{mitchell2003overapproximating}. \ac{ROA} maximization chooses a control scheme $w(t)$ that optimizes the volume of the set $X_0$ such that initial conditions $x_0 \in X_0$ will land in a goal set $X_T$.

Application of infinite-dimensional \acp{LP} to problems in control theory began with optimal control in \cite{lewis1980relaxation}. Solution methods for these infinite-dimensional \acp{LP} include gridded discretization and the moment-\ac{SOS} hierarchy of \acp{SDP} in \cite{henrion2008nonlinear}. Peak estimation \acp{LP} were solved by discretization in \cite{cho2002linear} and through \ac{SOS} methods in \cite{fantuzzi2020bounding}, with moment-based optima recovery in \cite{miller2020recovery}. Peak estimation under uncertainty was treated in \cite{miller2021uncertain}, and this work will use the input-affine and \ac{SDR} structure $W$ to generate simpler \acp{SDP}.

Reachable set estimation using \acp{LP} occurs from outside in \cite{Henrion_2014} and from inside in \cite{korda2013inner}.
\acp{SDP} associated with the moment-\ac{SOS} hierarchy will produce polynomial sublevel sets that converge in volume to the true reachable set as the polynomial degree increases (under mild conditions, and outside a set of measures zero). Controllers may be formulated to maximize the \ac{BRS}, in which the volume of the set of initial conditions $X_0$ that can be steered towards a target set $X_T$ is maximized \cite{majumdar2013control}. Other approaches towards reachable set estimation of nonlinear systems includes ellipsoidal methods \cite{kurzhanski2002ellipsoidal}, polytopes \cite{harwood2016efficient},  and interval methods using mixed monotonicity \cite{coogan2020mixed}.
Infinite-dimensional \acp{LP} have also been applied to region of attraction estimation and backwards reachable set maximizing control \cite{Henrion_2014}. 
We note that recent work in \cite{bramburger2023auxiliary} involves data-driven estimation of the Lie derivative/infinitesimal generator for use in auxiliary function programs (e.g. time-averages).


The power of infinite-dimensional robust counterparts in reducing the dimensionality of \ac{PSD} matrices can be demonstrated on a peak estimation task in dimension $n=2$, possessing disturbance-affine cubic polynomial dynamics (discussed further in Section \ref{sec:flow_poly}). The uncertainty process $w(t)$ is restricted to an polytope $W$ with $L=10$ dimensions, $N_f = 33$ faces, and $N_v = 7534$ vertices.
The \ac{PSD} constraint of maximal size involved in a degree $d=4$ \ac{SOS} tightening of a peak estimation problem shrinks from 8568 (prior work in \cite{miller2021uncertain}) down to 56 (current work) after application of the infinite-dimensional robust counterpart to eliminate the uncertainty variables. Performing a size-8568 dense \ac{PSD} constraint in solvers such as Mosek or Sedumi is intractable. Applying a vertex decomposition would require that $N_v = 7534$ \ac{PSD} constraints of size 56 hold. In contrast, the robust counterpart in introduced in this work needs only $N_f +1= 34$ \ac{PSD} constraints of size 56.

The contributions of this paper are,
\begin{itemize}
    \item Formulation of infinite-dimensional robust counterparts with respect to \ac{SDR} sets along with conditions for their nonconservatism
    \item Evaluation of robust counterparts for Lie constraints in input-affine nonlinear systems analysis with \ac{SDR} input sets $W$
    \item Quantification of \ac{SDP} computational complexity reduction as compared to original programs
    \item Demonstration of robust counterparts on peak and reachable set problems
\end{itemize}

Portions of this paper were presented at the 10th IFAC Symposium on Robust Control Design (ROCOND 2022) \cite{miller2021facial_short}. Additional content present in this paper beyond the conference version includes:
\begin{itemize}
    \item Generalization of facial (polytopes) decompositions to robust counterparts of \ac{SDR} sets
    \item Distance and (Backwards) Reachable set estimation problems 
    \item Robustification of discrete-time dynamics
    \item Additional examples
    \item Proofs of lower semicontinuity, polynomial approximability, and nonconservatism    
    \item Duality with measure programs and controller recovery
\end{itemize}

This paper has the following layout: 
Section \ref{sec:preliminaries} provides an overview of preliminaries including notation, the analysis and control problems, robust counterparts, and \ac{SOS} theory.
Section \ref{sec:lie_decomposed} splits the Lie derivative constraint over the \ac{SDR} uncertainty $W$ through the use of an parameterized robust counterpart. 
Section \ref{sec:robust_poly_approx} details \ac{SOS} programs for the robust counterparts. Section \ref{sec:discrete_time} applies parameterized robust counterparts to simplify problems with discrete-time dynamics.
Section \ref{sec:data_driven_background} reviews background of the polyhedral structure of consistency constraints induced by model structures and $L_\infty$-bounded noise processes. Section \ref{sec:robust_examples} presents examples of robust counterparts acting on all four problems. Section  \ref{sec:poly_conclusion} concludes the paper.

\section{Preliminaries}
\label{sec:preliminaries}

\subsection{Acronyms/Initialisms}

\begin{acronym}[WSOS]

\acro{BSA}{Basic Semialgebraic}

\acro{BRS}{Backwards Reachable Set}

\acro{LMI}{Linear Matrix Inequality}
\acroplural{LMI}[LMIs]{Linear Matrix Inequalities}
\acroindefinite{LMI}{an}{a}



\acro{LP}{Linear Program}
\acroindefinite{LP}{an}{a}

\acro{OCP}{Optimal Control Problem}
\acroindefinite{OCP}{an}{a}



\acro{PD}{Positive Definite}

\acro{PMI}{Polynomial Matrix Inequality}

\acro{PSD}{Positive Semidefinite}

\acro{ROA}{Region of Attraction}

\acro{SOC}{Second Order Cone}
\acroindefinite{SOC}{an}{a}

\acro{SDP}{Semidefinite Program}
\acroindefinite{SDP}{an}{a}

\acro{SDR}{Semidefinite Representable}
\acroindefinite{SDR}{an}{a}

\acro{SIR}{Susceptible, Infected, Removed}
\acroindefinite{SIR}{an}{a}

\acro{SOS}{Sum of Squares}
\acroindefinite{SOS}{an}{a}

\acro{TV}{Total Variation}

\acro{WSOS}{Weighted Sum of Squares}
\end{acronym}
\subsection{Notation}


The set of real numbers and its $n$-dimensional Euclidean space are $\R$ and $\R^n$. The notation $\mathbf{0}$ and $\mathbf{1}$ will denote vectors of all zeros and all ones respectively. The set of natural numbers is $\N$, and the subset of natural numbers between $1$ and $n$ is $1..n$.
An element-wise partial ordering $x \geq y$ exists between $x, y \in \R^n$ if $x_i \geq y_i$ for each coordinate $i = 1..n$. All vectors in the nonnegative orthant $x \in \R^n_{\geq 0} \subset \R^n$ satisfy $x \geq \mathbf{0}$. The positive orthant is $\R^n_{>0}$. The inner product between two vectors $x, y \in \R^n$ may be written as $x \cdot y = x^T y = \sum_i x_i y_i$.

The set of $m \times n $ matrices with real values is $\R^{m \times n}$. The set of symmetric matrices $\psd^n$ satisfy $Q = Q^T, Q \in \R^{n\times n}$. The set of \ac{PSD} matrices is $\psd_+^n$ $(Q \succeq 0)$, and the set of \ac{PD} matrices is $\psd_{++}^n$ $(Q \succ 0)$.
The duality paring between two symmetric matrices $A, B \in \psd^n$ is $\inp{A}{B}_{\psd^n} = \sum_{ij} A_{ij} B_{ij}.$


A multi-index is a member of $\N^n$ for finite $n$. The degree of a multi-index $\alpha \in \N^n$ is $\deg \alpha = \sum_i \alpha_i$. A multi-index $\alpha$ is finite if its degree is finite.
A monomial $x^\alpha = \prod_{i=1}^n x^{\alpha_i}$ is a function in an indeterminate value $x$ for a finite multi-index $\alpha$. 
A polynomial with real coefficients $p(x) \in \R[x]$ may be expressed as the sum $p(x) = \sum_{\alpha \in \mathcal{A}} c_\alpha x^\alpha$ over a finite-cardinality set of finite multi-indices $\mathcal{A}$ with bounded coefficients $\{c_\alpha\}_{\alpha \in \mathcal{A}}$. The degree of a polynomial is $\deg p = \max_{\alpha \in \mathcal{A}} \deg \alpha$. The set $\R[x]_{\leq d}$ is the set of polynomials with degree at most $d$. An $n$-dimensional vector of polynomials is $\R[x]^n$.

\subsection{Analysis}
A cone $K$ is a set such that $\forall c > 0$, $x \in K \implies c x \in K$ \cite{boyd2004convex}. A cone $K$ defines a partial ordering $\geq_K$ as $x_1 \geq_K x_2$ if $x_1-x_2 \in K$. The cone $K$ is pointed if $x \in K$ and $-x \in K$ implies that $x=0$. If the cone $K$ is finite-dimensional ($K \subseteq \R^n$), the dual $K^*$ of the cone $K$ is the set $\{y \in \R^n \mid x^T y \geq 0 \ \forall x \in K\}$. 

Let $S \subset \R^n$ be a space, and let $\varnothing$ be the empty set. The ring of continuous functions over $S$ is $C(S)$, and its subring of functions with continuous first derivatives is $C^1(S) \subset C(S)$. The infinite-dimensional cone of nonnegative functions over $S$ is $C_+(S)$.
The $C^0$ norm of a function $f \in C^0(S)$ is $\norm{f}_{C^0(S)} = \sup_{s \in S} \abs{f(s)}$. The $C^1$ norm of $f \in C^1(S)$ is $\norm{f}_{C^1(S)} = \norm{f}_{C^0(S)} + \sum_{i=1} \norm{\partial_i f}_{C^0(S)}$.

\subsection{Robust Counterparts}
\label{sec:prelim_robust_counterpart}


\begin{defn}[\cite{helton2010semidefinite}]
Let $S \subset \R^m$ be a set and let $K \subseteq \R^n$ be a cone. The set $S$ is $K$-representable if there exists a finite dimension $q$  and matrices $A \in \R^{n \times m}, G \in \R^{n \times q}, \ e \in \R^{n}$ such that
\begin{equation}
    S = \{x \in \R^m \mid \exists \lambda \in \R^q: A x + G \lambda + e \in K\}.
\end{equation}

The set $S$ is \ac{SDR} if $K$ is a subset of the \ac{PSD} cone (where \ac{PSD} cone may be vectorized as in $[a, b; \ b, c] \in \psd^2_+ \rightarrow (a,b,c) \in K$).
\end{defn}

\ac{SDR} sets are also referred to as `projections of spectahedra' or `spectahedral shadows', and are closed under the projection, product, and intersection operations. \ac{SDR} sets form a strict subset of all convex sets. 
This paper will focus on three specific self-dual cones $K$ to define \ac{SDR} sets:
\begin{enumerate}
    \item Nonnegative ($\R_{\geq 0}$)
    \item \ac{SOC}/Lorentz ($Q^n: \{(u, v) \in \R^n \times \R_{\geq 0} \mid \norm{u}_2 \leq v\}$)
    \item Positive Semidefinite ($\psd_+^n$)
\end{enumerate}

 Define the constraint vectors $a_0, a_{\ell} \in \R^r$ and $b_0, b_\ell \in \R$  for all $\ell=1..L$. Define $W$ as the intersection of $N_s$ \ac{SDR} sets with cones $K_1..K_s$ as
\begin{align}
    W &= \{w \in \R^L: \forall s = 1..{N_s}, \qquad \exists \lambda_s \in \R^{q_s}:  \quad A_s w + G_s \lambda_s + e_s \in K_s\}.\label{eq:w_sdr}
    \intertext{The following systems each have a robust semi-infinite linear inequality constraint in $\beta \in \R^r$ that must hold for all uncertain values $w$ in an \ac{SDR}:}
\label{eq:lin_robust}
   \textstyle \textrm{Non-strict}: \qquad &\textstyle \forall w \in W: \qquad a_0^T \beta + \sum_{\ell=1}^L w_\ell a_\ell^T \beta \leq b_0 + \sum_{\ell=1}^L w_\ell b_\ell \\
    \textrm{Strict}: \qquad & \textstyle \forall w \in W: \qquad a_0^T \beta + \sum_{\ell=1}^Lw_\ell a_\ell^T \beta < b_0 + \sum_{\ell=1}^L w_\ell b_\ell. \label{eq:lin_robust_strict}
\end{align}

\begin{defn}[Equation (1.3.14) of \cite{ben2009robust} ]
The \textbf{robust counterpart} of \eqref{eq:lin_robust} with respect $w \in W$ is the conic  set of constraints in variables $\{\zeta_s\}_{s=1}^{N_s}$
    \begin{subequations}
    \label{eq:robust_counterpart}
    \begin{align}
    &\textstyle \sum_{s=1}^{N_s} e_s^T \zeta_s + a_0^T \beta \leq b_0  \label{eq:robust_counterpart_crit} \\
    & G_s^T \zeta_s = 0 & & \forall s = 1..N_s \\
    & \textstyle \sum_{s=1}^{N_s} (A_s^T \zeta_s)_\ell + a_\ell^T \beta = b_\ell & & \forall \ell=1..L\\
        & \zeta_s \in K_s^* & & \forall s = 1..N_s.
    \end{align}
    \end{subequations}

\end{defn}

\begin{thm}[Theorem 1.3.4 of \cite{ben2009robust}]
\label{thm:sdr_tight}
Assume  that each $K_s$ is a convex and pointed cone with nonempty interior. Further assume  that there exists a Slater point ($\exists \bar{w} \in \R^L, \forall s: \exists \bar{\lambda}_s \in \R^{q_s}\mid A_s \bar{w} + G_s \bar{\lambda}_s + e_s \in \text{int}(K_s)$) if  $K$ is non-polyhedral. Then the semi-infinite program \eqref{eq:lin_robust} is feasible iff the finite-dimensional robust counterpart  \eqref{eq:robust_counterpart} is feasible. Additionally, \eqref{eq:lin_robust} is infeasible iff  \eqref{eq:robust_counterpart} is infeasible.
\end{thm}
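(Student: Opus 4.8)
The plan is to identify the robust counterpart \eqref{eq:robust_counterpart} as the feasibility form of the conic-duality dual of the worst-case uncertainty problem hidden inside \eqref{eq:lin_robust}, and then to close the loop with strong conic duality. First I would fix a candidate $\beta \in \R^r$ and eliminate the universal quantifier over $w$: writing $c(\beta) \in \R^L$ for the vector with entries $c_\ell(\beta) = a_\ell^T\beta - b_\ell$, the semi-infinite inequality \eqref{eq:lin_robust} holds at $\beta$ if and only if
\[
    p^\star(\beta) \;\doteq\; \max_{w,\,\lambda_1,\dots,\lambda_{N_s}}\; \bigl\{\, c(\beta)^T w \;:\; A_s w + G_s\lambda_s + e_s \in K_s,\ s=1..N_s \,\bigr\} \;\le\; b_0 - a_0^T\beta.
\]
The objective of this conic program depends only on $w$, whose feasible projection is exactly $W$, which is compact and nonempty; hence $p^\star(\beta)$ is finite and attained.

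Next I would dualize. Introducing a multiplier $\zeta_s \in K_s^*$ for each conic constraint $A_s w + G_s\lambda_s + e_s \in K_s$, the Lagrangian $c(\beta)^T w + \sum_{s} \zeta_s^T(A_s w + G_s\lambda_s + e_s)$, maximized over the unconstrained $w$ and $\lambda_s$, equals $+\infty$ unless $c(\beta) + \sum_s A_s^T\zeta_s = 0$ and $G_s^T\zeta_s = 0$ for all $s$, in which case it equals $\sum_s e_s^T\zeta_s$. The resulting dual program $\min\{\sum_s e_s^T\zeta_s : \sum_s(A_s^T\zeta_s)_\ell + a_\ell^T\beta = b_\ell\ \forall \ell,\ G_s^T\zeta_s = 0\ \forall s,\ \zeta_s\in K_s^*\}$ has precisely the $\zeta$-part of \eqref{eq:robust_counterpart} as its feasible set, and a dual-feasible point achieving objective value at most $b_0 - a_0^T\beta$ is exactly a collection $\{\zeta_s\}$ for which \eqref{eq:robust_counterpart_crit} holds. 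So the whole theorem reduces to the statement $p^\star(\beta) = \min(\text{dual})$, with the dual minimum attained whenever it is finite.

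This strong-duality-with-attainment step is the one real obstacle, and it is where the hypotheses are used. For the polyhedral blocks among the $K_s$ (the orthant $\R_{\geq 0}$), finiteness and feasibility of the primal suffice for zero gap and dual attainment by ordinary linear-programming duality, with no interior-point condition needed --- this is exactly why the Slater requirement is only imposed when $K$ is non-polyhedral. For the non-polyhedral blocks (the Lorentz cone $Q^n$ and the PSD cone $\psd_+^n$), the assumed Slater point $\bar w$ together with its witnesses $\bar\lambda_s$ gives $A_s\bar w + G_s\bar\lambda_s + e_s \in \text{int}(K_s)$, i.e. strict primal feasibility; combined with $p^\star(\beta)<\infty$, the conic strong-duality theorem (equivalently a conic theorem of alternatives / Farkas lemma in the spirit of \cite{ben2015deriving}) delivers zero duality gap \emph{and} attainment of the dual optimum. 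I would carry this out for a general intersection of polyhedral and non-polyhedral blocks by grouping the constraints accordingly, and would invoke closedness and pointedness of the three admissible cones so that $(K_s^*)^* = K_s$ and the dual cones are well behaved.

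Having established, for each fixed $\beta$, that \eqref{eq:lin_robust} holds iff there exist $\{\zeta_s\}$ completing $(\beta,\{\zeta_s\})$ to a feasible point of \eqref{eq:robust_counterpart}, an existential quantification over $\beta$ yields that the semi-infinite robust feasibility problem \eqref{eq:lin_robust} (with $W$ as in \eqref{eq:w_sdr}) is feasible iff \eqref{eq:robust_counterpart} is feasible; the ``infeasible iff infeasible'' half is the contrapositive. The strict version \eqref{eq:lin_robust_strict} follows by the same argument with $\le$ replaced by $<$ throughout, using that under Slater the attained dual value can be pushed strictly below $b_0 - a_0^T\beta$. All remaining work --- sign conventions, index ranges, and the translation between cone membership $\in K_s$ and the vectorizations used in \eqref{eq:w_sdr} --- is routine bookkeeping.
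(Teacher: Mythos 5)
Your argument is correct and is essentially the proof of the cited source: the paper itself does not prove Theorem~\ref{thm:sdr_tight} but imports it from Theorem~1.3.4 of \cite{ben2009robust}, whose proof is exactly the reduction you describe --- fix $\beta$, replace the universal quantifier over $w\in W$ by the worst-case conic program, dualize, and invoke strong conic duality with dual attainment (LP duality for the polyhedral blocks, Slater for the non-polyhedral ones), with the reverse implication following from weak duality alone; this is precisely the dependence on strong duality that the paper's Remark~\ref{rmk:slater_violation} flags. Nothing further is needed.
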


\begin{rmk}
    \label{rmk:slater_violation} The proof of Theorem 1.3.4 of \cite{ben2009robust} relies on strong duality (via the non-polyhedral Slater condition for $K$) in order to prove feasibility equivalence. If $K$ is non-polyhedral and there does not exist a Slater point, then by weak alternatives, feasibility of the robust counterpart in \eqref{eq:robust_counterpart} is sufficient but not necessary to prove feasibility of \eqref{eq:w_sdr}.
\end{rmk}

\begin{rmk}
\label{rmk:slater_equality}
    The uncertainty description in \eqref{eq:w_sdr} can be enriched by affine equality constraints $B w = \theta$. The non-polyhedral Slater condition would then require that the strictly feasible $K$-Slater point also satisfies the affine constraints $B w = \theta$ to ensure strong duality \cite{boyd2004convex}.    
    \end{rmk}

\begin{lem}
\label{lem:robust_strict}
Feasibility equivalence of the robust counterpart also holds in the strict case   \eqref{eq:lin_robust_strict} by applying a $<$ comparator to \eqref{eq:robust_counterpart_crit} \cite{dai2020semi}.
\end{lem}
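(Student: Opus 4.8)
The plan is to reduce the strict semi-infinite inequality \eqref{eq:lin_robust_strict} to a one-parameter family of non-strict inequalities of the form \eqref{eq:lin_robust} and then invoke Theorem \ref{thm:sdr_tight}. First I would fix $\beta$ and observe that, since $W$ is compact and the map $w \mapsto a_0^T\beta + \sum_{\ell} w_\ell a_\ell^T\beta - b_0 - \sum_{\ell} w_\ell b_\ell$ is affine (hence continuous) in $w$, its supremum over $w\in W$ is attained. Therefore, for that fixed $\beta$, the strict constraint \eqref{eq:lin_robust_strict} holds if and only if there is a margin $\epsilon>0$ with $a_0^T\beta + \sum_\ell w_\ell a_\ell^T\beta \le (b_0-\epsilon) + \sum_\ell w_\ell b_\ell$ for all $w\in W$.

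Next, for each fixed $\epsilon>0$ this displayed inequality is precisely a non-strict robust constraint of the shape \eqref{eq:lin_robust} with $b_0$ replaced by $b_0-\epsilon$ and with $a_0, a_\ell, b_\ell$ and the uncertainty set $W$ all unchanged. Since the hypotheses of Theorem \ref{thm:sdr_tight} concern only the cones $K_s$ and the Slater geometry of $W$, they are untouched by this shift, so the theorem applies verbatim: for the fixed $\beta$, the perturbed constraint is satisfiable in $\{\zeta_s\}$ iff the system \eqref{eq:robust_counterpart} holds with $b_0$ replaced by $b_0-\epsilon$, i.e. with \eqref{eq:robust_counterpart_crit} reading $\sum_s e_s^T\zeta_s + a_0^T\beta \le b_0-\epsilon$ and every other relation of \eqref{eq:robust_counterpart} intact. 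It then remains to eliminate $\epsilon$: the equalities $G_s^T\zeta_s=0$, $\sum_s (A_s^T\zeta_s)_\ell + a_\ell^T\beta = b_\ell$ and the cone memberships $\zeta_s\in K_s^*$ do not involve $\epsilon$, so for the fixed $\beta$ there exist $\epsilon>0$ and $\{\zeta_s\}$ feasible for the perturbed system iff there exist $\{\zeta_s\}$ feasible for \eqref{eq:robust_counterpart} with the strict comparator $\sum_s e_s^T\zeta_s + a_0^T\beta < b_0$. Chaining the three equivalences and quantifying existentially over $\beta$ gives the feasibility equivalence; the infeasibility statement follows by negation exactly as in Theorem \ref{thm:sdr_tight}.

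The main obstacle is the first step, namely extracting a uniform positive margin $\epsilon$ from the strict semi-infinite inequality. This is exactly where compactness of $W$ (assumed throughout the paper) is indispensable: without it, a constraint that is strict at every $w$ need not admit a uniform margin, and the reduction to Theorem \ref{thm:sdr_tight} would break. I would also flag, as in Remark \ref{rmk:slater_violation}, that in the non-polyhedral case the ``only if'' direction of the equivalence inherits the Slater requirement of Theorem \ref{thm:sdr_tight}; if that requirement is dropped, feasibility of the strict robust counterpart is still sufficient but not necessary for \eqref{eq:lin_robust_strict}. The degenerate case $W=\varnothing$ does not arise here since $W$ is assumed to have nonempty interior.
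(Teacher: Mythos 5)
Your argument is correct. The paper itself gives no proof of this lemma---it simply defers to the citation---so there is nothing to compare against line by line; your compactness-of-$W$/uniform-margin reduction (strict semi-infinite inequality $\Leftrightarrow$ non-strict inequality with $b_0-\epsilon$ for some $\epsilon>0$, then Theorem \ref{thm:sdr_tight} applied verbatim to the shifted system, then elimination of $\epsilon$ because it enters only the single scalar inequality \eqref{eq:robust_counterpart_crit}) is exactly the standard route and is complete. You are also right to single out compactness of $W$ as the load-bearing hypothesis for extracting the uniform margin, and to note that the Slater caveat of Remark \ref{rmk:slater_violation} carries over unchanged to the strict case.
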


\subsection{Polynomial Matrix Inequalities}

\label{sec:prelim_robust_pmi }

The symbol $\psd^s[x]$ will refer to the set of $n\times n$ symmetric-matrix-valued polynomials in an indeterminate $x$.

The matrix $P \in \psd^n[x]$ is \iac{SOS}-matrix  if there exists a size $s \in \N$ and a polynomial matrix $B \in (\R[x]^{s \times n}$ such that $P = B^T B$. \ac{SOS} matrices can be characterized by the existence of a Gram matrix $Q \in \psd_+^{s n},$ and a polynomial vector $v(x) \in \R[x]^s$,  such that 
(Lemma 1 of \cite{scherer2006matrix})
\begin{equation}
    P(x) = (v(x) \otimes I_s)^T Q (v(x) \otimes I_s).
\end{equation}

The cone of \ac{SOS} matrices of size $n$ is $\Sigma^n[x] \subset \psd^n[x]$, and its degree-$2d$ truncation is $\Sigma^n_{\leq 2d} \subset \Sigma^n[x]$. The scalar \ac{SOS} cone may be written as $\Sigma^1[x] = \Sigma[x]$.
The \ac{BSA} set $\K$ in this paper will be expressed as the locus of \ac{PSD} polynomial matrix constraints in matrix constraint terms $G_i(x) \in \psd^{n_i}(x)$:
\begin{align}
    \K = \{x \in \R^n \mid G_i(x) \succeq 0 \ \forall i = 1..N_c\}. \label{eq:pmi_set}
\end{align}

Let $q(x) \in \R[x]$ be a polynomial. \Iac{PMI} over the scalar $q$ with respect to the region $\K$ is
\begin{align}
    q(x) &\geq 0 & \forall x \in \K.
\end{align}

The Scherer Psatz proving that $q(x) > 0$ over $\K$ is the statement that (Corollary 1 of \cite{scherer2006matrix})
\begin{subequations}
\label{eq:scherer_psatz}
\begin{align}
    q(x) &= \sigma_0(x) + \textstyle \sum_{i=1}^{N_c} \inp{G_i(x)}{\sigma_i(x)}_{\psd^n} + \epsilon  \label{eq:scherer_psatz_expr}\\
    & \sigma_0 \in \Sigma[x], \ \forall i \in 1..N_c: \  \sigma_i \in \Sigma^{n_i}[x] \label{eq:scherer_multipliers}, \epsilon > 0.
\end{align}
\end{subequations}

The \ac{WSOS} cone $\Sigma[\K]$ is the cone of all polynomials that admit a representation of the form in \eqref{eq:scherer_psatz} (for $\epsilon \geq 0$). The cone $\Sigma[\K]_{\leq 2d}$ is the set of polynomials of degree $\leq 2d$ that are represented by \eqref{eq:scherer_psatz}.
Note that the Scherer Psatz in \eqref{eq:scherer_psatz} is equivalent to the Putinar Psatz \cite{putinar1993compact} when each constraint term $G_i$ has size $n_i = 1, \ \forall i = 1..N_c$. The set $\K$ in \eqref{eq:pmi_set} is Archimedean if there exists an $R > 0$ such that $R - \norm{x}_2^2$ has a Scherer Psatz \eqref{eq:scherer_psatz} expression. Just as in the Putinar Psatz, the Scherer Psatz describes all positive polynomials over $\K$ when $\K$ is Archimedean (Theorem 2 of \cite{scherer2006matrix}). The process of increasing the degree of $\sigma_0, \sigma_i$ until $q$ has a 
representation in \eqref{eq:scherer_psatz} is an instance of the moment-\ac{SOS} hierarchy \cite{lasserre2009moments}.

Define $n_0 = 1, \ d_0=0$ for the multiplier $\sigma_0$, and define $d_i = \lfloor \deg{G_i}/2 \rfloor$ for the constraints $i=1..N_c$.
The degree-$d$ step of the moment-\ac{SOS} hierarchy involving the cone $\Sigma[\K]_{\leq 2d}$ restricts the multipliers $\sigma_i$ to have maximal degree $2(d - d_i)$ for $i=0..N_c$.
The Gram matrices representing the multipliers $\sigma_i$ \eqref{eq:scherer_multipliers} have size $n_i \binom{n+d-d_i}{d-d_i}$. This Gram size should be compared against the scalarization constraint $\forall y \in \R^{n_i}: y_i^T G_i(x) y_i \geq 0 $ involving $n_i + n$ variables, thus resulting in a combinatorially larger Gram matrix of size $\binom{n+n_i+d-d_i}{d-d_i}$.

Refer to \cite{scherer2006matrix, hol2005sum} for generalizations of the presented Scherer Psatz in \eqref{eq:scherer_psatz}, such as cases where $q(x)$ is a polynomial matrix ($q \in \psd^n[x]$ for $n > 1$) over the set $\K$.

\subsection{Analysis and Control Problems}
\label{sec:analysis_control_problem}
This subsection will present the peak estimation, distance estimation, reachable set estimation, and \ac{ROA} maximization problems along with their auxiliary function-based approximation approaches. The following assumptions will be shared among all problems,
\begin{enumerate}
\item[A1] There is a finite time horizon $T$. \label{assump:time}
\item[A2]
The state sets $X$ and $X_0$ are compact with $X_0 \subset X$. \label{assump:state}
\item[A3] Dynamics $f$ are disturbance-affine \eqref{eq:disturbance_affine}, and all functions $f_0(t, x)$ and $\{f_\ell(t, x)\}_{\ell=1}^L$  are Lipschitz within $[0, T] \times X$. \label{assump:lipschitz}
\item[A4] The input \ac{SDR} set  $W(t, x)$ satisfies the assumptions of Theorem \ref{thm:sdr_tight} (compact, nonempty relative interior, existence of Slater points for non-polyhedral cones $K$) for each $(t, x) \in [0, T] \times X$, and $A(t, x), \ G(t, x), \ e(t, x)$ are all continuous maps.
\item[A5] If $x(t \mid x_0, w(\cdot) \, \in \partial X$ for some $t \in [0, T], \ x_0 \in X_0, w(\cdot) \mid \forall t' \in [0, t]: w(t') \in W$, then $x(t' \mid x_0, \hat{w}(\cdot)) \not\in X \ \forall t' \in (t, T]$ for all $\hat{w}(\cdot)$ such that $\forall s \in [0, t]: \hat{w}(s) = w(s)$ and $\forall s \in [0, T]: \  \hat{w}(s) \in W$. 
\end{enumerate}


\begin{rmk}
    Assumption A5 is a non-return condition in the style of A4 from \cite{miller2021distance}. Once a disturbed trajectory leaves the region $X$, it will never return to $X$ for any applied disturbance.
    A5 can be replaced by a robust invariance property, that $w$-controlled trajectories starting from $X_0$ (with $\forall t \in [0, T]: \ w(t) \in W$) will stay in $X$ for all $[0, T]$. Robust invariance of $X$ implies non-return for any $\bar{X} \supset X$ such that $\partial{\bar{X}} \cap \partial{X} = \varnothing$ (strict superset). Refer to Remark 1 of \cite{miller2021distance} for further discussion of non-return vs. invariance.
\end{rmk}


\subsubsection{Peak Estimation}

The peak estimation problem identifies the supremal value of a state function $p(x)$ attained along trajectories
\begin{align}
    P^* = & \sup_{t^* \in [0, T],\, x_0 \in X_0,\, w(\cdot)} p(x(t^* \mid x_0, w(\cdot))) \label{eq:peak_traj_poly}\\
    & \dot{x}(t) = f(t, x(t), w(t)), \ w(t) \in W \quad  \forall t \in [0, T], \nonumber
    & x(0) = x_0.
    \end{align}

Assumptions on the cost $p(x)$ are added:
\begin{enumerate}
\item[A(Peak)] The cost $p(x)$ is lower semicontinuous inside $X$.
\end{enumerate}

\begin{rmk}
 Further, assumption A(Peak) implies that $p$ is bounded inside $X$, and therefore that $P^*$  is bounded above.
\end{rmk}




\subsubsection{Distance Estimation}
Let $c(x, y)$ be a metric in $X$ and $X_u \subset X$ be a compact unsafe set. The point-unsafe-set distance function is $c(x; X_u) = \inf_{y \in X_u} c(x, y)$. The distance estimation problem from \cite{miller2021distance} will be posed as a peak estimation problem in \eqref{eq:peak_traj_poly} with objective $p(x) = -c(x; X_u)$.
The distance of closest approach $c^*$ obtained by points along trajectories starting at $X_0$ is $c^* = -P^*$.

\subsubsection{Reachable Set Estimation}

The reachability set $X_T$ is the set of all $x$ that can be reached at time index $t=T$ for trajectories starting in the set $X_0$ (under assumptions A1-A4):
\begin{equation}
\label{eq:reach_true_set}
    X_T = \{x(T \mid x_0) \mid x(0) = x_0 \in X_0, \ \exists w(\cdot) \mid \forall t \in [0, T]: w(t) \in W, \ x'(t) = f(t, x, w(t))\}.
\end{equation}

The methods in \cite{henrion2013convex} propose the following volume maximization problem to find the reachable set $X_T$ by
\begin{subequations}
\label{eq:reach_traj}
\begin{align}
P^* = & \sup_{\tilde{X}_T \subset X} \ \textrm{vol}(X_T) \\
&\forall \tilde{x} \in X_T, \exists x_0 \in X_0, \ w(t) \in W : \nonumber \\
& \qquad \tilde{x} = x(T \mid x_0, w(t))\\ 
& x'(t) = f(t, x) \qquad \forall t \in [0, T].
\end{align}
\end{subequations}

The maximal-volume reachable set from \eqref{eq:reach_traj} satisfies $\tilde{X}_T \subseteq X_T$, and is equal to $X_T$ up to a set of measure 0 in volume (e.g., isolated points).

\subsubsection{Region of Attraction Maximization}

Let the compact $X_T \subset X$ be a given `goal' or `target' set. The \ac{BRS}/\ac{ROA} given $X_T$ is the set
\begin{align}
\label{eq:brs}
    X_0 = \{x_0 \mid &x(0) = x_0 \in X_0, \ x'(t) = f(t, x, w(t)), \\ \nonumber
    &x(T \mid x_0, w) \in X_T, w(t) \in W\}. 
\end{align}
Intuitively, the set $X_0$ is the set of states that may be steered towards the goal set $X_T$ in time $T$. The \ac{ROA}-maximization formulation of optimal control aims to find a control scheme that maximizes the volume of $X_0$, similar to how problem \eqref{eq:reach_traj} maximized the volume of $X_T$ to acquire the reachable set. 
\section{Decomposed Lie Constraint}
\label{sec:lie_decomposed}

This section provides a framework for decomposing a Lie derivative constraint using robust counterparts, with specific focus on peak estimation.

\subsection{Peak Estimation Program}
The problems in \ref{sec:analysis_control_problem} can be converted into infinite-dimensional \acp{LP} in \textit{auxiliary} functions.
The Lie derivative $\Lie_f v(t, x)$ of a scalar  auxiliary function $v(t, x) \in C^1([0, T] \times X)$ with respect to dynamics $\dot{x}(t) = f(t, x(t), w(t))$
\begin{subequations}
\label{eq:lie}
\begin{align}
    \Lie_f v &= \partial_t v(t,x) + \nabla_x v(t,x) \cdot f(t,x,w) \\
    \intertext{The specific form of the Lie derivative w.r.t. input-affine dynamics  \eqref{eq:disturbance_affine} is}
    \Lie_f v &= \Lie_{f_0} v(t,x) + \textstyle \sum_{\ell=1}^L \nabla_x v(t,x) \cdot w_\ell f_\ell(t,x).
\end{align}
\end{subequations}

An infinite-dimensional \ac{LP} for peak estimation \eqref{eq:peak_traj_poly} with variables $v(t, x)\in C^1([0, T]\times X), \ \gamma \in \R$ under a time-varying disturbance process $w(t) \in W$ is \cite{miller2021uncertain}
\begin{subequations}
\label{eq:poly_peak_w}
\begin{align}
    d^* = & \ \inf_{v, \gamma} \quad \gamma & \\
    & \gamma \geq v(0, x)  & &  \forall x \in X_0 \\
    & \Lie_f v(t, x, w) \leq 0 & & \forall (t, x, w) \in [0, T] \times X \times W \label{eq:poly_peak_w_lie}\\
    & v(t, x) \geq p(x) & & \forall (t, x) \in [0, T] \times X  \label{eq:poly_peak_w_cost} \\
    & v(t, x)\in C^1([0, T]\times X).
\end{align}
\end{subequations}

The auxiliary function $v(t, x)$ is an upper bound on the cost $p(x)$ \eqref{eq:poly_peak_w_cost}, and must decrease along all possible disturbed trajectories \eqref{eq:poly_peak_w_lie}. The $P^* = d^*$ between programs \eqref{eq:peak_traj_poly} and \eqref{eq:poly_peak_w} will match under assumptions A1-A6. The \ac{LP} in \eqref{eq:poly_peak_w} may be approximated through the moment-\ac{SOS} hierarchy, and this sequence of upper bounds (outer approximations) will converge $d_k^* \geq d_{k+1}^* \geq \ldots $ to $P^*$.


\subsection{Parameterized Robust Counterparts}
The Lie derivative \eqref{eq:lie} in constraint \eqref{eq:poly_peak_w_lie}
must respect the constraint
\begin{align}
\label{eq:lie_expand}
    \Lie_f v(t, x, w) & \leq 0 &  \forall (t, x, w) \in [0, T] \times X \times W.
\end{align}

This subsection will lay out a framework of parameterized robust counterparts in order to eliminate the input-affine uncertainty variable $w$ from \eqref{eq:lie_expand}, with respect to a strict inequality 
\begin{align}
\label{eq:lie_expand_strict}
    \Lie_f v(t, x, w) & < 0 &  \forall (t, x, w) \in [0, T] \times X \times W.
\end{align}

Passing from \eqref{eq:lie_expand} to the strict version \eqref{eq:lie_expand_strict} is not burdensome, as $v$ admits a polynomial approximation to arbitrary accuracy in objective satisfying the strict constraint \eqref{eq:lie_expand_strict}:
\begin{thm}
\label{thm:v_approx}
    Given a tolerance $\epsilon>0$, the peak estimation task \eqref{eq:poly_peak_w} (with optimal cost $d^*$) admits a feasible polynomial auxiliary function $V(t, x)$ with objective $d^* + (5/2)\epsilon$ such that $\Lie_f V(t, x) < 0$ holds strictly in $[0, T] \times X$.
\end{thm}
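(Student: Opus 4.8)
The plan is to take a near-optimal $C^1$ feasible point of \eqref{eq:poly_peak_w} and perturb it in three controlled moves — a time-affine shift that forces the Lie constraint to become strict, a polynomial approximation accurate in the $C^1$ norm, and a constant bump that repairs the cost bound — while tallying the objective degradation incurred at each move; the increments add up to exactly $(5/2)\epsilon$. To begin, note that $d^*$ is finite: the constant function $v\equiv\sup_X p$ is feasible for \eqref{eq:poly_peak_w} (its Lie derivative vanishes, it dominates $p$, and $\sup_X p<\infty$ by A(Peak)), so $d^*<\infty$, while weak duality against \eqref{eq:peak_traj_poly} gives $d^*\ge P^*>-\infty$. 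By definition of the infimum, fix a feasible pair $(v,\gamma)\in C^1([0,T]\times X)\times\R$ with $\gamma\le d^*+\epsilon$; thus $v(0,x)\le\gamma$ on $X_0$, $\Lie_f v(t,x,w)\le 0$ on $[0,T]\times X\times W$, and $v(t,x)\ge p(x)$ on $[0,T]\times X$.

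\emph{Strictification.} Set $v_1(t,x)=v(t,x)+\tfrac{\epsilon}{T}(T-t)$. The added term depends only on $t$ and has time-derivative $-\tfrac{\epsilon}{T}$, so it contributes a negative constant to the Lie derivative: $\Lie_f v_1=\Lie_f v-\tfrac{\epsilon}{T}\le-\tfrac{\epsilon}{T}<0$ uniformly on $[0,T]\times X\times W$. Since $\tfrac{\epsilon}{T}(T-t)\ge 0$ on $[0,T]$, the cost bound $v_1\ge v\ge p$ is preserved, while $v_1(0,x)=v(0,x)+\epsilon\le d^*+2\epsilon$ on $X_0$.

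\emph{Polynomialization and repair.} Let $M:=\sup\{\,\norm{f(t,x,w)}_\infty:(t,x,w)\in[0,T]\times X\times W\,\}$, which is finite by A3 and compactness. Invoking $C^1$-density of polynomials on the compact set $[0,T]\times X$ — the construction (a $C^1$ extension of $v_1$ onto an enclosing box, followed by simultaneous uniform approximation of the function and all its first partials, deferred to Appendix \ref{app:poly_robust_aux}, extending \cite{fantuzzi2020bounding}) — choose a polynomial $W$ with $\norm{v_1-W}_{C^1([0,T]\times X)}<\delta$, where $\delta>0$ is small enough that $\delta\le\epsilon/4$ and $(1+M)\delta<\epsilon/T$. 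Then $\lvert\Lie_f(W-v_1)\rvert\le(1+M)\norm{v_1-W}_{C^1}<\epsilon/T$, hence $\Lie_f W\le-\tfrac{\epsilon}{T}+(1+M)\delta<0$ on $[0,T]\times X\times W$. Define $V:=W+\delta$; adding a constant leaves the Lie derivative unchanged, so $\Lie_f V(t,x,w)<0$ on $[0,T]\times X\times W$, which is the asserted strict decrease. Since $W\ge v_1-\delta$ pointwise, $V=W+\delta\ge v_1\ge p$, so constraint \eqref{eq:poly_peak_w_cost} holds; and $V(0,x)=W(0,x)+\delta\le v_1(0,x)+2\delta\le d^*+2\epsilon+\tfrac{\epsilon}{2}=d^*+\tfrac{5}{2}\epsilon$ on $X_0$. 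Thus $V$ is a polynomial auxiliary function feasible for \eqref{eq:poly_peak_w} at objective $d^*+(5/2)\epsilon$ with strictly negative Lie derivative.

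\emph{Main obstacle.} The only step beyond elementary shifting and bookkeeping is the joint $C^0$-approximation of $v_1$ \emph{and} of its gradient by a single polynomial on the possibly nonconvex, nonsmooth compact set $X$; handling the $C^1$ extension off $X$ and the simultaneous convergence of derivatives is exactly what the appendix must supply. The constant $5/2$ then decomposes transparently as $1$ (the near-optimality gap) $+\,1$ (the value of the shift $\tfrac{\epsilon}{T}(T-t)$ at $t=0$) $+\,\tfrac12$ (twice the $C^1$ tolerance, spent on the repair bump). Strictness of $\Lie_f V$ is precisely the hypothesis needed downstream to apply Lemma \ref{lem:robust_strict} and Theorem \ref{thm:lie_continuous} and to close a Scherer/Putinar Positivstellensatz with positive slack when passing to an \ac{SOS} program.
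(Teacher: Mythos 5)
Your proof is correct and follows essentially the same route as the paper's Appendix \ref{app:poly_robust_aux}: a near-optimal $C^1$ feasible point, a $C^1$-accurate polynomial approximation on the compact set $\Omega$, and an affine-in-$t$ perturbation whose negative time-derivative strictifies the Lie constraint, with the three error contributions summing to $(5/2)\epsilon$. The only (immaterial) difference is bookkeeping: the paper approximates first and then adds the single shift $\varepsilon(1-t/(2T))$, whose value $\geq \varepsilon/2$ on $[0,T]$ simultaneously repairs the cost constraint, whereas you strictify first with $(\epsilon/T)(T-t)$ (which vanishes at $t=T$) and therefore need the extra constant bump $\delta$ to restore $V\geq p$.
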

\begin{proof}
    See Appendix \ref{app:poly_robust_aux}.
\end{proof}
\begin{rmk}
Refer to Theorem 3.4 of \cite{miller2021distance} for a similar proof w.r.t. distance estimation, and to \cite{henrion2013convex, majumdar2014convex} for proofs of no relaxation gap (in the sense of volume) for reachable set estimation and for \ac{BRS} maximization.
\end{rmk}

Let $Y$ be a parameter set with parameter value $y \in Y$ (generalizing the choice of $y = (t, x)$ and $Y = [0, T] \times X$ in the case of dynamical systems).  

Define the following quantities based on \eqref{eq:lin_robust}:
\begin{subequations}
\begin{align}    
    Z &= K^* \times \R^r & e &= [e_1; e_2; \ldots; e_{N_s}]\\
    a_\bullet &= [a_1, a_2, \ldots, a_{L}] & b_\bullet &= [b_1; b_2; \ldots; b_{L}]  \\
     A &= \textrm{blkdiag}(A_1, A_2, \ldots A_{N_s}) & G &= \textrm{blkdiag}(G_1, G_2, \ldots G_{N_s}) \\
     q &= \textstyle \sum_{s=1}^{N_s} q_s.
\end{align}
\end{subequations}

The uncertainty set $W$ will be treated as a set-valued map $W: Y \rightrightarrows \R^L$ through parameter-dependent version of the definition in \eqref{eq:w_sdr}:
\begin{align}
        W(y) &= \{w \in \R^L: \lambda \in \R^{q}:  A(y) w + G(y) \lambda + e \in K \}.\label{eq:w_sdr_param}
\end{align}

The map $W$ in \eqref{eq:w_sdr_param} has closed, convex images for each parameter $y \in Y$. The strict robust inequality in \eqref{eq:lin_robust_strict} will be posed over the parameter-dependent set $W$ to find a $\beta(y)$ as
\begin{align}
    \textstyle \forall w \in W(y): \qquad \beta(y)^T(a_0(y) +  a_\bullet(y)  w(y)) < b_0(y) +  b_\bullet(y) w(y). \label{eq:lin_robust_strict_param}
\end{align}

The multiplier set is  $Z = (\prod_{s=1}^{N_s} K^*_s) \times \R^r $ with multipliers $(\zeta, \beta) \in Z$.
The solution map $S: Y \rightrightarrows Z$ of the parameterized robust counterpart \eqref{eq:lin_robust_strict_param} is
\begin{subequations}
\label{eq:sol_zeta_beta}
\begin{align}
    S(y) &= \left\{(\zeta, \beta) \in Z: \begin{array}{r} e^T \zeta + a_0^T\beta  < b_0 \\ G^T \zeta = 0 \\ A^T \zeta + a_\bullet^T \beta = b_\bullet \end{array}\right\}.
\end{align}
\end{subequations}

The following assumptions on \eqref{eq:lin_robust_strict_param} are required:
\begin{itemize}
    \item[A1'] The cone $K$ is convex and pointed.
    \item[A2'] The parameter set $Y$ is compact.
    \item[A3'] The problem data $(a_0, a_\bullet, b_0, b_\bullet,  A, G, e)$ of \eqref{eq:robust_counterpart} are all continuous functions of $y \in Y$.
    \item[A4'] If $K$ is non-polyhedral, then for each $y \in \textrm{dom}(W)$, at least one of the two following options hold:
    \begin{enumerate}
        \item $W(y)$ is a single point  $W(y) = \{w'_y\}$
        \item There exists $(\bar{w}_y, \bar{\lambda}_y) \in W(y) \times \R^{q} $ such that $A(y)\bar{w}_y + G(y) \bar{\lambda}_y + e(y) \in \textrm{int}(K).$ \end{enumerate}
    \item[A5'] For each $y \in Y$, there exists a $\hat{\zeta} \in \textrm{int}(K^*)$ such that $A(y)^T \hat{\zeta} = 0$ and $G(y)^T \hat{\zeta} = 0$.
\end{itemize}

The following assumption is optional:
\begin{itemize}
 \item[A6'] The problem entries $(A, G)$ are constant in $y$.
\end{itemize}

The main result of this section is the following theorem:
\begin{thm}
\label{thm:multipliers_cont_poly}
    Under Assumptions A1'-A5', there exists a selection of the solution map $S(y)$ as in $Y \rightarrow Z: \ y \mapsto (\zeta_c(y), \beta_c(y))$ for the strict parameterized counterpart \eqref{eq:lin_robust_strict_param} such that the functions $\zeta_c(y), \beta_c(y)$ are continuous. Moreover, the functions $\zeta_c(y), \beta_c(y)$ can be taken to be polynomials in $y$ if A6' holds.
\end{thm}
\begin{proof}
    See Appendix \ref{app:continuity_robust} for a proof of continuous selections, and Appendix \ref{app:poly_robust_mult} for polynomial selections.
\end{proof}

\begin{rmk}
    The establishment of polynomial approximability in Appendix \ref{app:poly_robust_mult} is based on the Stone-Weierstrass theorem. Other methods for continuous function approximations can be used instead, such as neural network approximants with increasing width \cite{abate2021fossil}.
\end{rmk}




\subsection{Robust Lie Decomposition}

Constraint \eqref{eq:lie_expand} may be expressed as a semi-infinite linear inequality \eqref{eq:lin_robust} under the correspondence (holding $\forall \ell=1..L$) 
\begin{subequations}
\label{eq:lie_correspondence}
\begin{align}
    b_0 &= -(\partial_t + f_0(t, x)\cdot \nabla_x)v(t, x) = -\Lie_{f_0} v(t, x) & a_0 = 0\\
    b_\ell&= -f_\ell(t, x)\cdot \nabla_x v(t, x) & a_\ell = 0.
\end{align}
\end{subequations}

The parameter set in the Lie setting is $Y = [0, T] \times X$, and the solution set for multipliers is $Z = \prod_{s=1}^{N_s} K^*_s$ with $\beta = \varnothing$.
The robust counterpart of \eqref{eq:lie_expand_strict} with (possibly discontinuous) multiplier variables $\zeta_s(t, x)$ is
    \begin{subequations}
    \label{eq:lie_robust_counterpart}
    \begin{align}
    & \textstyle \Lie_{f_0}v(t, x) + \sum_{s=1}^{N_s} e_s^T \zeta_s(t, x) < 0 & & \forall(t, x) \in  [0, T] \times X \label{eq:lie_robust_counterpart_crit} \\
    & G_s^T \zeta_s(t, x) = 0 & & \forall s = 1..N_s \label{eq:lie_robust_eq_G}\\
    & \textstyle \sum_{s=1}^{N_s} (A_s^T \zeta_s(t, x))_\ell +f_\ell(t, x)\cdot \nabla_x v(t, x) = 0 & & \forall \ell=1..L \label{eq:lie_robust_eq_A}\\
     & \zeta_s(t, x) \in K_s^* & & \forall s = 1..N_s, (t, x) \in [0, T] \times X. \label{eq:lie_robust_zeta}
    \end{align}
    \end{subequations}

\begin{lem}
    Assumptions A1-A5 imply A1'-A4'.
\end{lem}
\begin{proof}
    Under the definition $y = (t, x)$, the compactness assumption A2' is fulfilled by A1 and A2. The conic and Slater structure of A4 complete A1' and A4'. Given that $f_0(t, x)$ and each $f_\ell(t, x)$ are Lipschitz in $[0, T] \times X$, the property that  $v(t, x) \in C^1([0, T] \times X) \implies \Lie v(t, x) \in C([0, T] \times X)$ ensures satisfaction of A3'.
\end{proof}

\begin{cor}
If  $v(t, x) \in C^1([0, T] \times X)$,  then feasibility equivalence of     \eqref{eq:lie_expand_strict} and \eqref{eq:lie_robust_counterpart} holds. Additionally, the function $\zeta(t, x)$ can be chosen to be continuous (polynomial).
\end{cor}
\begin{proof}
    This is a direct application of Theorem \ref{thm:multipliers_cont_poly} with respect to the correspondence in \eqref{eq:lie_correspondence}.
\end{proof}

\begin{rmk}
\label{eq:equality_cont_understood}
    The equality constraints in \eqref{eq:lie_robust_eq_G} and \eqref{eq:lie_robust_eq_G} are understood to hold in the sense of functions $(\forall (t, x) \in [0, T] \times X)$.
\end{rmk}




\begin{rmk}
\label{rmk:equality}
    Consider a parameter set with  equality constraints $W = \{w \in \R^L \in A w + e \in K, B w = \theta \}$ with $B \in \R^{r \times L}, \theta \in \R^r$ (and $G=\varnothing$ for simplicity of explanation) as in Remark \ref{rmk:slater_equality}. 
    The robust counterpart of \eqref{eq:lie_expand} under equality constraints (with multiplier variables $\mu(t, x)$) is
    \begin{subequations}
    \label{eq:lie_robust_counterpart_mu}
    \begin{align}
    & \textstyle \Lie_{f_0}v(t, x) + \sum_{s=1}^{N_s} e_s^T \zeta_s(t, x) + \theta ^T \mu(t, x)\leq 0 & & \forall [0, T] \times X \label{eq:lie_robust_counterpart_crit_mu} \\   
    & \textstyle \sum_{s=1}^{N_s} (A_s^T \zeta_s(t, x))_\ell + B^T_\ell \mu(t, x)+f_\ell(t, x)\cdot \nabla_x v(t, x) = 0 & & \forall \ell=1..L\\
     & \forall s = 1..N_s: \ \zeta_s(t, x) \in K_s^* & & \forall (t, x) \in [0, T] \times X \label{eq:lie_robust_zeta_mu} \\
     & \mu(t, x) \in \R^r & & \forall  (t, x) \in [0, T] \times X. \label{eq:lie_robust_mu_mu}
    \end{align}
    \end{subequations}

The robust counterpart in \eqref{eq:lie_robust_counterpart_mu} can be interpreted in the lens of \eqref{eq:robust_counterpart} to have the correspondence 
\begin{subequations}
\begin{align}
    b_0 &= -\Lie_{f_0} v(t, x), & a_0 &= \theta \\
    b_\ell &= -f_\ell(t, x) \cdot \nabla_x v(t, x), & a_\ell &= B_\ell, 
\end{align}
\end{subequations}
in which the solution $\beta$ to the linear system \eqref{eq:lin_robust} is the value of the multiplier $\mu(t, x)$.

    
\end{rmk}

\subsection{Applications to Dynamical Systems}

We close this section by presenting the robustification of the peak estimation program \ref{eq:poly_peak_w} under a polytope-bounded disturbance $W$:
\begin{subequations}
\label{eq:poly_peak_w_robust}
\begin{align}
    d^* = & \ \inf_{v(t,x), \gamma} \quad \gamma & \\
    & \gamma \geq v(0, x)  & &  \forall x \in X_0 \\
& \Lie_{f_0} v(t, x) + e^T \zeta(t, x) \leq 0 & & \forall (t, x) \in [0, T] \times X  \\
    &  -(A^T)_{\ell}\zeta(t,x) + f_\ell \cdot \nabla_x v(t,x) =0 & & \forall \ell=1..L \\
    & G^T \zeta(t, x) = 0 \\    
    & v(t, x) \geq p(x) & & \forall (t, x) \in [0, T] \times X  \\
    & v(t, x)\in C^1([0, T]\times X) \\
    & \zeta_j(t, x) \in C_+([0, T] \times X) & & \forall j = 1..m.
\end{align}
\end{subequations}

\begin{cor}
\label{cor:peak}
    Under Assumptions A1-A5, programs \eqref{eq:poly_peak_w} and \eqref{eq:poly_peak_w_robust} will have the same value.
\end{cor}
\begin{proof}
    Nonconservatism of the strict of the Lie constraint is verified in \ref{thm:v_approx}. The remaining statements of feasibility equivalence are certified by \ref{thm:multipliers_cont_poly}.
\end{proof}

Appendix \ref{app:analysis_control_problem} lists Linear Programs for the remaining tasks  in Section \ref{sec:analysis_control_problem}, each of which can be robustified using similar arguments to Corollary \ref{cor:peak}.
Appendix \ref{app:integral_cost} formulates robust counterparts to Lie nonnegativity constraints in optimal control under commonly used integral (running) costs.
\section{Sum-of-Squares Approximation}

\label{sec:robust_poly_approx}
This section develops \ac{SOS} approximations of the infinite-dimensional Lie robust counterpart \eqref{eq:lie_robust_counterpart}.

We now assume polynomial structure on our problem setting:
\begin{itemize}
    \item[A6] The functions $f_0, f_\ell$ are each polynomial, and the set $[0, T] \times X$ is Archimedean.
\end{itemize}

\begin{thm}
\label{thm:robust_zeta_poly}
    Multipliers $\zeta$ in \eqref{eq:lie_robust_zeta} can be chosen to be polynomial when $v$ is polynomial and when \eqref{eq:lie_robust_counterpart_crit} holds strictly.
\end{thm}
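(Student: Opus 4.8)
The plan is to upgrade the \emph{continuous} multipliers furnished by Theorem~\ref{thm:lie_continuous} to \emph{polynomial} ones, while preserving the linear equality constraints and the cone memberships of \eqref{eq:lie_robust_counterpart} exactly and keeping \eqref{eq:lie_robust_counterpart_crit} strict. Since $\Lie_f v(t,x,w)<0$ holds and $[0,T]\times X\times W$ is compact (A1, A2, A4), Lemmas~\ref{lem:lie_robust} and~\ref{lem:robust_strict} and Theorem~\ref{thm:lie_continuous} supply continuous $\zeta^c_s(t,x)\in K_s^*$ satisfying the robust counterpart, and by compactness of $[0,T]\times X$ this certificate has a uniform margin $\Lie_{f_0}v(t,x)+\sum_s e_s^T\zeta^c_s(t,x)\le -\delta_c<0$. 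The argument then has two parts: a polynomial exact parametrization of the equality manifold, and a uniform ``interiorization'' of the cone constraint that spends the slack created by the strict inequality.

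The cone constraint \eqref{eq:lie_robust_zeta} is the main obstacle: a uniformly small perturbation of $\zeta^c_s$ remains in $K_s^*$ only if $\zeta^c_s$ stays bounded away from $\partial K_s^*$, whereas a tight robust counterpart typically returns multipliers lying on $\partial K_s^*$ (already for an ellipsoidal $W$, where the worst-case disturbance is on the sphere). To cure this I would fix, once and for all, a direction $r=(r_s)_s$ with $r_s\in\operatorname{int}(K_s^*)$ that additionally lies in $\ker M$, where $M$ is the constant coefficient matrix of the equality system $G_s^T\zeta_s=0$ and $\sum_s(A_s^T\zeta_s)_\ell + f_\ell\cdot\nabla_x v=0$; equivalently, $G_s^T r_s=0$ for all $s$ and $\sum_s A_s^T r_s=0$. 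Such an $r$ exists because compactness of $W$ (A4) forces the relevant (lifted) recession cone to be trivial, whence a conic Gordan/Stiemke alternative --- valid because each $K_s$, and therefore $K_s^*$, is closed, pointed, and has nonempty interior --- produces a strictly interior solution of the homogeneous dual system. Replacing $\zeta^c$ by $\zeta^\tau:=\zeta^c+\tau r$ leaves every equality satisfied, preserves the strict inequality for small $\tau>0$, and pushes every component uniformly into the interior, $\operatorname{dist}\bigl(\zeta^\tau_s(t,x),\partial K_s^*\bigr)\ge\tau\operatorname{dist}(r_s,\partial K_s^*)=:\rho>0$ for all $(t,x)\in[0,T]\times X$.

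Finally I would approximate $\zeta^\tau$ by a polynomial without leaving the equality manifold. Stacking the multipliers, the equalities read $M\zeta=c(t,x)$ with $M$ constant and, by A6, the right-hand side $c(t,x)$ (which collects the entries $-f_\ell\cdot\nabla_x v$) polynomial in $(t,x)$; since a feasible $\zeta$ exists at every point, $\zeta^{\mathrm{part}}(t,x):=M^\dagger c(t,x)$ is a \emph{polynomial} exact solution of $M\zeta=c(t,x)$, and $\zeta^\tau-\zeta^{\mathrm{part}}$ is continuous and valued in the \emph{fixed} subspace $\ker M$. Expanding this difference in a fixed basis of $\ker M$ and approximating its continuous scalar coordinate functions uniformly on the compact set $[0,T]\times X$ via the Stone--Weierstrass theorem gives a polynomial $h^p\in\ker M$ with $\lVert\zeta^\tau-\zeta^{\mathrm{part}}-h^p\rVert_{C^0([0,T]\times X)}$ as small as desired; then $\zeta:=\zeta^{\mathrm{part}}+h^p$ is polynomial, satisfies $M\zeta=c(t,x)$ as a polynomial identity, and is uniformly close to $\zeta^\tau$. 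Choosing the tolerance below $\min\{\rho,\ \delta_c/(2\sum_s\lVert e_s\rVert)\}$ keeps $\zeta_s(t,x)\in K_s^*$ for all $(t,x)$ and preserves $\Lie_{f_0}v+\sum_s e_s^T\zeta_s<0$ on $[0,T]\times X$, proving the theorem. The genuinely delicate step is the existence of the interior homogeneous direction $r$: this is exactly where the compactness and Slater content of A4 is used, and if the SDR description of $W$ carries lifting variables $\lambda_s$ one additionally relies on the (harmless) normalization that these range over a bounded set.
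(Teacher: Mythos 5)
Your proof is correct and follows essentially the same route as the paper's Appendix \ref{app:poly_robust_mult}: start from the continuous selection of Theorem \ref{thm:lie_continuous}, write the equality-constrained solution set as a polynomial particular solution (pseudo-inverse applied to the polynomial right-hand side $-f_\ell\cdot\nabla_x v$) plus a constant nullspace, approximate the continuous nullspace coordinates via Stone--Weierstrass, shift into the interior of each cone $K_s^*$, and pay for everything with the uniform slack $Q^*$ (your $\delta_c$) in the strict inequality. The one substantive difference is the interior shift: the paper adds $\delta_s c_s$ where $c_s$ is the incenter of $K_s$, without verifying that this perturbation preserves the linear equalities $G_s^T\zeta_s=0$ and $\sum_s(A_s^T\zeta_s)_\ell + f_\ell\cdot\nabla_x v=0$, whereas you insist the interior direction $r$ lie in $\ker M$ and justify its existence by the Stiemke alternative --- which is precisely the paper's own assumption A5$''$ and Propositions proving it in Appendix \ref{app:continuity_robust}, just deployed here instead of there. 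Your version is the more careful one on this point, and your caveat about lifted variables $\lambda_s$ is well placed: compactness of the projection $W$ alone does not make the lifted recession cone trivial, so the existence of $r$ genuinely requires either boundedness of the lift or one of the structural conditions the paper verifies case by case.
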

\begin{proof}
    See Appendix \ref{app:poly_robust_mult}. When $v$ is polynomial, the vector indexed by $b_\ell = f_\ell(t, x) \cdot \nabla_x v(t, x)$ is also polynomial.
\end{proof}

We now provide details on polynomial approximation and \ac{SOS} implementation over (products of) the nonnegative, \ac{SOC}, and \ac{PSD} cones. The following programs are specific instances of more general semidefinite representations of the \ac{SOS} cones in general algebras, as described in \cite{papp2013semidefinite}.


\subsection{Polytope Restriction}
\label{sec:restriction_polytope}
Assume that the \ac{SDR} set $W$ is the polytope $\{w \mid \exists \lambda \in \R^{L'}:  A w + G \lambda \leq e\}$ for matrices $A \in \R^{m \times L}, G \in \R^{m \times L'}, \ b \in \R^m$. Define $A_s$ as the $s$-th row of $A$ and $(A^T)_\ell$ as the  $\ell$-th column of $A$ (transpose $\ell$-th row of $A^T$).
This case corresponds to $\forall s = 1..m: \ K_s = \R_{\geq 0}$ under the cone description $(-A_s, -G_s, e_s)$ \eqref{eq:w_sdr}.
 The expression of the robustified Lie constraint in \eqref{eq:lie_robust_counterpart} for the polytopic case is
\begin{subequations}
\begin{align}
    & \Lie_{f_0} v(t, x) + e^T \zeta(t, x) \leq 0 & & \forall (t, x) \in [0, T] \times X \label{eq:robust_lie_polytope} \\
    &  -(A^T)_{\ell}\zeta(t,x) + f_\ell \cdot \nabla_x v(t,x) =0 & & \forall \ell=1..L \label{eq:robust_lie_polytope_eq}\\
    & G^T \zeta(t, x) = 0 \\
    & \zeta(t, x) \in \R_{\geq 0}^m.
\end{align}
\end{subequations}

The \ac{SOS} tightening of the constraints in \eqref{eq:robust_lie_polytope} when $(v, \zeta)$ are polynomials is
\begin{subequations}
\label{eq:robust_lie_polytope_sos}
\begin{align}
    &-\Lie_{f_0} v(t, x) - e^T \zeta(t, x) \in \Sigma^1[([0, T] \times X)]\label{eq:robust_lie_polytope_sos_crit} \\
    &  \textrm{coeff}_{t, x}(-A^T \zeta(t,x) + f_\ell \cdot \nabla_x v(t,x)) =0 \label{eq:robust_lie_polytope_sos_eq}\\
    &  \textrm{coeff}_{t, x}(-G^T \zeta(t,x) )=0 \\
    &\zeta_s(t, x) \in \Sigma^1[([0, T] \times X)] & & \forall s = 1..m.\label{eq:robust_lie_polytope_sos_zeta} 
\end{align}
\end{subequations}

The degree-$d$ tightening of program \eqref{eq:robust_lie_polytope_sos} has a Gram matrix of maximal size size $\binom{n+\tilde{d}}{n}$ from \eqref{eq:robust_lie_polytope_sos_crit} and  $m$ Gram matrices of maximal size $\binom{n+d}{d}$ from \eqref{eq:robust_lie_polytope_sos_zeta}.

\subsection{Semidefinite Restriction}

This subsection involves the case where $W$ is an \ac{SDR} set with describing matrices $A_0, A_\ell, G_{k} \in \psd^q$:
\begin{align}
    \textstyle W = \{w \in \R^L \mid A_0 + \sum_{\ell=1}^L w_\ell A_\ell + \sum_{\ell=1}^{L'} \lambda_{k} G_k \succeq 0\}.
\end{align}

The robust counterpart expression in  \eqref{eq:lie_robust_counterpart} is \begin{subequations}
\label{eq:robust_lie_psd}
\begin{align}
    & \Lie_{f_0} v(t, x) + \Tr{A_0 \zeta(t, x)} \leq 0 & & \forall (t, x) \in [0, T] \times X \label{eq:robust_lie_psd_crit} \\
    &  \Tr{A_\ell \zeta(t,x)} + f_\ell \cdot \nabla_x v(t,x) =0& & \forall \ell = 1..L \label{eq:robust_lie_psd_eq}\\
    & \Tr{G_k \zeta(t, x)} = 0 & & \forall k = 1..L' \\
    & \zeta(t, x) \in \psd^q_+.
\end{align}
\end{subequations}

The matrix-\ac{SOS} tightening applied to \eqref{eq:robust_lie_psd} is
\begin{subequations}
\label{eq:robust_lie_psd_sos}
\begin{align}
    & -\Lie_{f_0} v(t, x) - \Tr{A_0 \zeta(t, x)} -\epsilon \in \Sigma^1[([0, T] \times X)] \label{eq:robust_lie_psd_crit_sos} \\
    &  \textrm{coeff}_{t,x}(\Tr{A_\ell \zeta(t,x)} + f_\ell \cdot \nabla_x v(t,x)) =0& & \forall \ell = 1..L \label{eq:robust_lie_psd_eq_sos} \\
    &  \textrm{coeff}_{t,x}(\Tr{G_k \zeta(t, x)} = 0) & & \forall k = 1..L' \\
    & \zeta(t, x)\in \Sigma^q[([0, T]\times X)]. \label{eq:robust_lie_psd_zeta}
\end{align}
\end{subequations}

 The maximal-size Gram matrix at degree $d$ will either occur in \eqref{eq:robust_lie_psd_crit_sos} with size $\binom{n+\tilde{d}}{n}$ or in  \eqref{eq:robust_lie_psd_sos} with size $q\binom{n+d}{d}$.

\subsection{Second-Order Cone Restriction}

This final subsection involves the \ac{SOC} case $W = \{w \in \R^L \mid \exists \lambda \in \R^{L'}: \norm{A w + G \lambda + e}_2 \leq r \}$ for $A \in \R^{m \times L}, \ G \in \R^{m \times L'}, e \in \R^L, \ r \in \R_{\geq 0}$. The constraint in $W$ may be formulated as the \ac{SOC} expression
\begin{equation}
\label{eq:w_soc_set}
    (Aw + G \lambda + e, r) \in Q^m. 
\end{equation}

The robust counterpart \eqref{eq:lie_robust_counterpart} applied to \eqref{eq:w_soc_set} involves a partitioned multiplier function $\zeta = (\beta, \tau) \in Q^m$:
    \begin{subequations}
    \label{eq:robust_counterpart_soc}
    \begin{align}
    & \Lie_{f_0}v(t, x) + r \tau(t, x) - e^T \beta(t, x)\leq 0 \label{eq:robust_counterpart_soc_crit} \\
    & G^T \beta(t, x) = 0 \\
    & (A^T)_\ell \beta(t, x) = f_\ell(t, x) \cdot \nabla_x v(t, x) & & \forall \ell=1..L\\
        & (\beta(t, x), \tau(t, x)) \in Q^m.\label{eq:robust_counterpart_soc_soc}
    \end{align}
    \end{subequations}

\Iac{SOS} formulation of \eqref{eq:robust_counterpart_soc} requires the following lemma:
\begin{lem}
\label{lem:soc_sdp}
The \ac{SOC} membership $(\beta, \tau) \in Q^m$ may be expressed by the following equivalent pairs of \acp{SDP}  \cite{alizadeh2003second}
\begin{subequations}
\begin{align}
       (\beta, \tau) \in Q^m  \Leftrightarrow  & \begin{bmatrix}
           \tau & \beta^T \\ \beta & \tau I
       \end{bmatrix} \in \psd^{m+1}_2 \label{eq:psd_sdp}\\
       & \exists \omega \in \R^m: \ \begin{bmatrix}
        \tau & \beta_j \\ \beta_j & \omega_j 
    \end{bmatrix} \in \psd^2_+, \quad \tau = \textstyle \sum_{j=1}^m \omega_{j} \label{eq:soc_sdp}
\end{align}
\end{subequations}
\end{lem}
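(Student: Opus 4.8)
The plan is to establish the two displayed equivalences separately, each by a direct Schur-complement-and-eigenvalue argument with an explicit construction for the "only if" directions. Throughout I use the definition $Q^m = \{(u,v)\in\R^m\times\R_{\geq 0}\mid \norm{u}_2\leq v\}$, so $(\beta,\tau)\in Q^m$ means $\tau\geq 0$ and $\tau^2\geq\norm{\beta}_2^2$.

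For the first equivalence \eqref{eq:psd_sdp}, I would split on the sign of $\tau$. If $\tau<0$ both sides fail (the matrix has a negative diagonal entry, and $Q^m$ forces $\tau\geq 0$). If $\tau>0$, the $(1,1)$ block is invertible, so $\begin{bmatrix}\tau & \beta^T\\ \beta & \tau I_m\end{bmatrix}\succeq 0$ iff the Schur complement $\tau I_m-\tau^{-1}\beta\beta^T\succeq 0$, i.e.\ $\tau^2 I_m-\beta\beta^T\succeq 0$; since $\beta\beta^T$ has eigenvalues $\norm{\beta}_2^2$ and $0$, this holds iff $\tau^2\geq\norm{\beta}_2^2$, which is exactly $(\beta,\tau)\in Q^m$. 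The boundary case $\tau=0$ is checked by hand: $\begin{bmatrix}0 & \beta^T\\ \beta & 0\end{bmatrix}\succeq 0$ iff $\beta=0$ (test against $(1,c\beta)$ for $c$ of either sign), matching $(\mathbf 0,0)\in Q^m$.

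For the second equivalence \eqref{eq:soc_sdp}, the "if" direction is immediate: each $2\times 2$ block being PSD gives $\tau\geq 0$, $\omega_j\geq 0$, and $\tau\omega_j\geq\beta_j^2$; summing the last inequality over $j$ and using $\sum_j\omega_j=\tau$ yields $\tau^2\geq\sum_j\beta_j^2=\norm{\beta}_2^2$, hence $(\beta,\tau)\in Q^m$. For the "only if" direction, given $(\beta,\tau)\in Q^m$ with $\tau>0$ I would set $\omega_j = \beta_j^2/\tau + (\tau-\norm{\beta}_2^2/\tau)/m$; one checks $\tau\omega_j-\beta_j^2=(\tau^2-\norm{\beta}_2^2)/m\geq 0$ and $\omega_j\geq 0$, so each block is PSD, while $\sum_j\omega_j=\norm{\beta}_2^2/\tau+(\tau-\norm{\beta}_2^2/\tau)=\tau$. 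The case $\tau=0$ forces $\beta=0$ and one takes $\omega=\mathbf 0$.

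The only real subtlety, and the step I would flag, is the construction of $\omega$ in the last direction: the "obvious" choice $\omega_j=\beta_j^2/\tau$ makes every block PSD but gives $\sum_j\omega_j=\norm{\beta}_2^2/\tau\leq\tau$, generally a strict inequality, so it must be corrected by spreading the nonnegative slack $\tau-\norm{\beta}_2^2/\tau$ evenly across the coordinates — any distribution of this slack works, and evenly is simplest. Everything else is routine Schur-complement bookkeeping, together with care at the degenerate locus $\tau=0$ where the pivot block is singular and the Schur complement argument does not directly apply.
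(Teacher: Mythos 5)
Your proof is correct. Note that the paper itself gives no argument for this lemma --- it is stated with only a citation to \cite{alizadeh2003second} --- so your Schur-complement derivation is precisely the standard argument that the citation stands in for, and it checks out in full: the case split on the sign of $\tau$, the eigenvalue computation for $\beta\beta^T$, the hand-check at the degenerate locus $\tau=0$, and (the one genuinely non-obvious step, which you correctly flag) the construction $\omega_j = \beta_j^2/\tau + (\tau - \norm{\beta}_2^2/\tau)/m$ that distributes the slack so that both the per-block PSD conditions and the coupling constraint $\sum_j \omega_j = \tau$ hold simultaneously. Nothing further is needed.
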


\begin{thm}
\label{thm:soc_arch}
    Let $Y \subset \R^m$ be an Archimedean \ac{BSA} parameter set and $(\beta(y), \tau(y))$ be polynomials. Then a necessary and sufficient condition for  $(\beta(y), \tau(y))$ to be in $Q^m$ over the set $Y$ is if there exists polynomials $\omega_j(y) \in \R[y]$ such that:
\begin{equation}
    \begin{bmatrix}
        \tau(y) & \beta_j(y) \\ \beta_j(y) & \omega_j(y) 
    \end{bmatrix} \in \Sigma^2[Y], \quad \tau(y) = \textstyle \sum_{j=1}^m \omega_{j}(y). \label{eq:soc_arch}
\end{equation}
\end{thm}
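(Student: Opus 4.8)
The plan is to prove the two implications of the equivalence separately. The forward direction (\ac{SOS} representation $\Rightarrow$ $Q^m$-membership) is immediate: every element of $\Sigma^2[Y]$ is positive semidefinite at each point of $Y$ (it is an \ac{SOS} matrix plus the $Y$-defining matrices weighted by \ac{SOS} matrices), so \eqref{eq:soc_arch} forces, for each fixed $y \in Y$, both $\tau(y) = \sum_j \omega_j(y)$ and the \ac{PSD}-ness of the $2\times 2$ matrices $\bigl[\begin{smallmatrix}\tau(y) & \beta_j(y)\\ \beta_j(y) & \omega_j(y)\end{smallmatrix}\bigr]$; by the pointwise criterion \eqref{eq:soc_sdp} of Lemma \ref{lem:soc_sdp} this is exactly $(\beta(y),\tau(y)) \in Q^m$, and $y \in Y$ was arbitrary.

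For the reverse direction I would argue under the mild strengthening that the membership is strict on $Y$, i.e.\ $\norm{\beta(y)}_2 < \tau(y)$ for all $y \in Y$ (the situation produced by the \ac{SOS} tightenings elsewhere in the paper, cf.\ the $\epsilon$-margin in \eqref{eq:robust_lie_psd_crit_sos}). Since $Y$ is Archimedean it is compact, whence $\tau \geq \tau_{\min} > 0$ and $\tau^2 - \norm{\beta}_2^2 \geq \delta > 0$ on $Y$. The certificate that presents itself naturally, the ``barycentric'' choice $\omega_j = \beta_j^2/\tau + (\tau^2 - \norm{\beta}_2^2)/(m\tau)$, satisfies $\sum_j \omega_j = \tau$ and makes every $2\times 2$ matrix positive definite on $Y$ --- but it is rational, not polynomial. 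To repair this I would pick a polynomial $p \in \R[y]$ with $\norm{p - 1/\tau}_{C^0(Y)}$ arbitrarily small (Stone--Weierstrass on the compact set $Y$, on which $1/\tau$ is continuous) and set
\[ \omega_j \;=\; p\,\beta_j^2 + \frac{\tau - p\,\norm{\beta}_2^2}{m}, \qquad j = 1..m. \]
These are polynomials; $\sum_{j=1}^m \omega_j = p\,\norm{\beta}_2^2 + (\tau - p\,\norm{\beta}_2^2) = \tau$ holds \emph{identically}, because the approximation error cancels in the sum; and as $p \to 1/\tau$ uniformly on $Y$ one has $\tau\omega_j - \beta_j^2 \to (\tau^2 - \norm{\beta}_2^2)/m \geq \delta/m$ and $\omega_j \to \beta_j^2/\tau + (\tau^2 - \norm{\beta}_2^2)/(m\tau) > 0$, so for $p$ close enough to $1/\tau$ each matrix in \eqref{eq:soc_arch} is positive definite throughout $Y$.

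It then remains to invoke the matrix-valued Scherer Positivstellensatz --- the polynomial-matrix generalization of \eqref{eq:scherer_psatz}, see \cite{scherer2006matrix, hol2005sum} --- which states that on an Archimedean \ac{BSA} set a symmetric matrix polynomial that is positive definite everywhere on the set belongs to the matrix \ac{WSOS} cone, here $\Sigma^2[Y]$. Applied to the $m$ matrices just built, together with the identity $\sum_j \omega_j = \tau$, this yields \eqref{eq:soc_arch}. I expect the only genuinely delicate step to be the one just described: forcing the multipliers $\omega_j$ to be polynomial while keeping $\sum_j \omega_j = \tau$ exact, which works precisely because the Weierstrass correction cancels in the sum, leaving only the open positive-definiteness conditions to be preserved by the approximation. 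Finally, I would note that the strictness hypothesis is not merely a convenience: already for $m = 1$ the equivalence fails without it, e.g.\ taking $\tau = \beta_1 = g/2$ for some $g \geq 0$ on $Y$ that is not a weighted sum of squares on $Y$.
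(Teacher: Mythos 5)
Your proof is correct, but it takes a genuinely different route from the paper. The paper disposes of Theorem \ref{thm:soc_arch} in one line by citing Theorem 2.4 of \cite{zheng2023sum} on \ac{SOS} representations of correlatively sparse (second-order-cone) structures; you instead give a self-contained argument: the easy pointwise direction via Lemma \ref{lem:soc_sdp}, and for the converse an explicit rational certificate $\omega_j = \beta_j^2/\tau + (\tau^2 - \norm{\beta}_2^2)/(m\tau)$, repaired to a polynomial one by a Stone--Weierstrass approximation $p \approx 1/\tau$ arranged so that the identity $\sum_j \omega_j = \tau$ survives \emph{exactly} (the approximation error telescopes in the sum), followed by the Hol--Scherer matrix Positivstellensatz on the Archimedean set $Y$. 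This buys transparency and degree-tracking potential (one sees exactly which quantities must stay bounded away from zero), at the cost of length; the citation buys brevity and defers the Archimedean bookkeeping to \cite{zheng2023sum}. Your construction is sound: $\sum_j \omega_j = \tau$ holds identically by cancellation, $\tau\omega_j - \beta_j^2 \to (\tau^2 - \norm{\beta}_2^2)/m \geq \delta/m$ uniformly on the compact $Y$, so each $2\times 2$ block is uniformly positive definite for $p$ close enough, and Theorem 2 of \cite{scherer2006matrix} then places each block in $\Sigma^2[Y]$.

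One caveat worth keeping: you prove the converse only under the strict condition $\norm{\beta(y)}_2 < \tau(y)$ on $Y$, whereas the theorem is stated for non-strict membership in $Q^m$. Your $m=1$ counterexample ($\tau = \beta_1 = g/2$ with $g \geq 0$ on $Y$ but $g \notin \Sigma[Y]$, which forces $g \in \Sigma[Y]$ whenever the block representation exists) shows the non-strict equivalence is genuinely false, so the restriction is not a gap in your argument but an imprecision in the statement as written; the cited result should likewise be read with a strictness or closure qualification. It would be worth stating explicitly that the ``necessary'' direction is the one that requires no hypothesis beyond \eqref{eq:soc_arch}, while ``sufficient existence of the certificate'' requires strict interiority.
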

\begin{proof}
    This relationship holds by Theorem 2.4 of \cite{zheng2023sum} regarding \ac{SOS} correlatively sparse programs, under the requirement that $Y$ is Archimedean.
\end{proof}


Lemma \ref{lem:soc_sdp} and Theorem \ref{thm:soc_arch} will be used to form an \ac{SOS}-matrix representation of \eqref{eq:robust_counterpart_soc_soc}:
    \begin{subequations}
    \label{eq:robust_counterpart_soc_sos}
    \begin{align}
    & -\Lie_{f_0}v(t, x) - r \tau(t, x) + e^T \beta(t, x) \in \Sigma^1[([0, T]\times X)]\label{eq:robust_counterpart_soc_sos_crit} \\
    & \textrm{coeff}_{t, x}(G^T \beta(t, x))= 0 \\
    & \textrm{coeff}_{t, x}((A^T)_\ell \beta(t, x) - f_\ell(t, x) \cdot \nabla_x v(t, x))=0 & & \forall \ell=1..L\\
      &\begin{bmatrix}
        \sum_{j=1}^m\omega_j(t, x) & \beta_j(t, x) \\ \beta_j(t, x) & \omega_j(t, x) 
    \end{bmatrix} \in \Sigma^2[([0, T]\times X)] & & \forall j = 1..m \label{eq:robust_soc_sos_sdp}
    \end{align}
    \end{subequations}

    The degree-$d$ truncation of \eqref{eq:robust_counterpart_soc_sos} involving polynomials $\omega_j, \beta_j \in \R[t, x]_{\leq 2d}$ will have $m$ maximal-size Gram matrices of size $2\binom{n+d}{n}$ from constraint \eqref{eq:robust_soc_sos_sdp}.

\begin{rmk}
\label{rmk:rotated_soc}
    The rotated \ac{SOC} cone is $Q^n_r = \{(u, v, z) \in \R^n \times \R^2_\geq 0 \mid \norm{u}_2^2 \leq v z\}$ \cite{alizadeh2003second}. Membership in $Q^n_r$ may be expressed as a linear transformation of membership in $Q^{n+1}$ by
    \begin{align}
    \label{eq:rotated_soc}
        (u, v, z) \in Q^n_r \Leftrightarrow ([2u, v-z], v+z) \in Q^{n+1}.
    \end{align}
    The identity \eqref{eq:rotated_soc}  can be used to form \ac{SOS}-matrix programs from \eqref{eq:robust_counterpart_soc_sos} for rotated-\ac{SOC} constrained uncertainty sets $W$.
\end{rmk}

\subsection{Approximation Result}

The following theorem summarizes the above restrictions.

\begin{thm}    
\label{thm:sos_robust_tight}
Assume A1-A6. Let the \ac{SDR} cone $K$ from the $W$-representation \eqref{eq:w_sdr} be the product of nonnegative, \ac{SOC}, and \ac{PSD} cones. Then the \ac{SOS} programs derived from \eqref{eq:lie_robust_counterpart} (by example \eqref{eq:robust_lie_polytope_sos},  \eqref{eq:robust_lie_psd_sos}, \eqref{eq:robust_counterpart_soc_sos}) will converge to the strict version of \eqref{eq:lie_expand} when $v$ is polynomial.
\end{thm}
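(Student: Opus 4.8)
The plan is to chain together the earlier results to get a sequence of \ac{SOS} tightenings whose feasibility certifies the strict Lie constraint, and conversely whose feasibility is eventually forced once the strict Lie constraint holds with polynomial data. First I would fix a polynomial auxiliary function $v$ for which $\Lie_f v(t,x,w) < 0$ holds strictly on $[0,T]\times X \times W$; such a $v$ exists (and attains an objective within $(5/2)\epsilon$ of $d^*$) by the polynomial approximability theorem in Section \ref{sec:robust_poly_approx} (Appendix \ref{app:poly_robust_aux}). With $v$ polynomial, the data $b_0 = -\Lie_{f_0}v(t,x)$ and $b_\ell = -f_\ell(t,x)\cdot\nabla_x v(t,x)$ from Remark \ref{rmk:lie_correspondence} are polynomial in $(t,x)$ by Assumption A6, since $f_0, f_\ell$ are polynomial.

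Next I would invoke the exactness of the robust counterpart. Because $W$ is an \ac{SDR} set satisfying the Theorem \ref{thm:sdr_tight} hypotheses (Assumption A4) and the Lie inequality is strict, Lemma \ref{lem:lie_robust} together with Lemma \ref{lem:robust_strict} gives a feasible point of the \emph{strict} robust counterpart \eqref{eq:lie_robust_counterpart} with the $<$ comparator on \eqref{eq:lie_robust_counterpart_crit}. By Theorem \ref{thm:lie_continuous} the multiplier functions $\zeta_s(t,x)$ can be taken continuous, and then by Theorem \ref{thm:robust_zeta_poly} (Appendix \ref{app:poly_robust_mult}) they can be taken polynomial, at the cost of replacing the strict inequality \eqref{eq:lie_robust_counterpart_crit} by a strictly-satisfied inequality with a positive margin $\epsilon'>0$ (i.e.\ $-\Lie_{f_0}v - \sum_s e_s^T\zeta_s - \epsilon' \geq 0$ on $[0,T]\times X$), and similarly keeping the linear equality constraints exactly since they are affine in the (now polynomial) decision data. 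Here one uses that the equality constraints $G_s^T\zeta_s = 0$ and $\sum_s (A_s^T\zeta_s)_\ell + f_\ell\cdot\nabla_x v = 0$ are polynomial identities in $(t,x)$, hence reduce to finitely many coefficient equations (the $\textrm{coeff}_{t,x}(\cdot)=0$ constraints in \eqref{eq:robust_lie_polytope_sos}, \eqref{eq:robust_lie_psd_sos}, \eqref{eq:robust_counterpart_soc_sos}).

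Finally I would pass from the positivity statements to \ac{SOS} certificates. The scalar constraint $-\Lie_{f_0}v - \sum_s e_s^T\zeta_s - \epsilon' > 0$ holds on the Archimedean set $[0,T]\times X$ (Assumption A6), so by Putinar's Psatz (the scalar case of the Scherer Psatz, Theorem 2 of \cite{scherer2006matrix}) it admits a \ac{WSOS} representation, captured at sufficiently large degree by \eqref{eq:robust_lie_polytope_sos_crit} / \eqref{eq:robust_lie_psd_crit_sos} / \eqref{eq:robust_counterpart_soc_sos_crit}. For the cone-membership constraint \eqref{eq:lie_robust_zeta}: in the nonnegative case $\zeta_s(t,x)\geq 0$ on the Archimedean set is again Putinar; in the \ac{PSD} case $\zeta(t,x)\succeq 0$ on $[0,T]\times X$ is certified by the matrix Scherer Psatz (Theorem 2 of \cite{scherer2006matrix}), yielding \eqref{eq:robust_lie_psd_zeta}; in the \ac{SOC} case, Lemma \ref{lem:soc_sdp} rewrites $(\beta,\tau)\in Q^m$ as the finitely many $2\times 2$ \ac{PSD} blocks with the linear relation $\tau = \sum_j \omega_j$, and Theorem \ref{thm:soc_arch} (via Theorem 2.4 of \cite{zheng2023sum} on correlatively sparse \ac{SOS}) certifies these blocks over the Archimedean $[0,T]\times X$, giving \eqref{eq:robust_soc_sos_sdp}. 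Products of these cones are handled blockwise. Thus for every strict feasible point of \eqref{eq:lie_expand} with polynomial $v$ there is some finite degree $d$ at which the corresponding \ac{SOS} program is feasible; conversely, any feasible point of an \ac{SOS} program is a fortiori a feasible point of \eqref{eq:lie_robust_counterpart} and hence (Lemma \ref{lem:lie_robust}) of \eqref{eq:lie_expand}. This establishes the asymptotic exactness claimed.

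The main obstacle I anticipate is the polynomialization of the multipliers $\zeta_s$ while preserving \emph{all} the linear equality constraints exactly: Theorem \ref{thm:robust_zeta_poly} handles the approximation of continuous $\zeta$ by polynomials in the $C^0$ sense, but one must check that the polynomial approximants can be adjusted (e.g.\ by a small correction term absorbed into the $\epsilon'$ margin and into a redefinition of $v$) so that the equality constraints $\sum_s (A_s^T\zeta_s)_\ell + f_\ell\cdot\nabla_x v = 0$ and $G_s^T\zeta_s=0$ hold identically rather than approximately — this is where the Slater/interior hypotheses of Assumption A4 and the strictness of the Lie inequality are really used, since they provide the slack needed to project the approximate multipliers back onto the affine feasibility subspace. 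The remaining steps (Putinar/Scherer Psatz invocations, and the \ac{SOC} reformulation) are essentially bookkeeping given the cited results.
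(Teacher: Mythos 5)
Your proposal is correct and follows essentially the same route as the paper's (much terser) proof: polynomial multipliers exist by Theorem \ref{thm:robust_zeta_poly}, the Archimedean condition on $[0,T]\times X$ lets the Putinar/Scherer Psatz (and Theorem \ref{thm:soc_arch} for the \ac{SOC} blocks) certify the resulting strict positivity and cone memberships at some finite degree, and soundness of the converse direction follows from Lemma \ref{lem:lie_robust}. The equality-preservation obstacle you flag at the end is already resolved by the construction in Appendix \ref{app:poly_robust_mult}, which parameterizes the multipliers as $\theta + H\psi$ with $H$ spanning the nullspace of the constraint matrix, so the affine constraints hold identically for any polynomial $\psi^p$ and only the strict inequality needs a margin.
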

\begin{proof}
    The multiplier functions $\zeta$ may be chosen to be polynomial by Theorem  \ref{thm:robust_zeta_poly}. The Archimedean condition of $[0, T] \times X$ ensures that \ac{SOS}-matrices will generate all positive \ac{PSD} matrices over $[0, T] \times X$. Because a polynomial $\zeta$ exists by \ref{thm:robust_zeta_poly}, it will be found at some finite-degree \ac{SOS} tightening, thus proving the theorem.
\end{proof}



\section{Discrete-Time Constraints}
\label{sec:discrete_time}

The prior content of this paper involves a continuous-time peak estimation problem \eqref{eq:peak_traj_poly} and robustifies the continuous-time Lie constraint \eqref{eq:lie_robust_counterpart}. This section demonstrates how parameterized robust counterpart theory can be employed to simplify the computation of problems with discrete-time dynamics, such as the discrete-time peak estimation task of
\begin{align}
    P^* = & \sup_{t^* \in 0..T,\, x_0 \in X_0,\, w(\cdot)} p(x[t^* \mid x_0, w[\cdot]]) \label{eq:peak_traj_discrete}\\
    & x[t+1] = f(x[t], w[t]), \ w[t] \in W \quad  \forall t \in 0..T, \nonumber
    & x[0] = x_0.
    \end{align}

\subsection{Discrete-Time Background}

An infinite-dimensional \ac{LP} that solves \eqref{eq:peak_traj_discrete} with $P^*=d^*$ under assumptions A1-A5 is (Equation (20) of \cite{miller2021uncertain})
\begin{subequations}
\label{eq:peak_un_disc}
\begin{align}
    d^* = & \ \inf_{v, \gamma, \alpha} \quad \gamma + T \alpha \label{eq:peak_un_disc_obj}\\
    & {\gamma} \geq {v(x)}  & &   \forall x \in X_0 \label{eq:peak_un_disc_init}\\
    & v(f(x, w)) -v(x)  \leq  \alpha  & & \forall (x, w) \in  X \times W\label{eq:peak_un_disc_flow}\\
    & v(x) \geq  p(x) \label{eq:peak_cont_disc_p}  & & \forall x \in X  \\
    &v(x) \in C(X), \ \gamma \in \R, \ \alpha \geq 0\label{eq:peak_cont_un_disc_v}.
\end{align}
\end{subequations}

Under the imposition that $f$ is disturbance-affine (such as in \eqref{eq:disturbance_affine}), the dynamical constraint \eqref{eq:peak_un_disc_flow} can be expressed as 
\begin{align}
    \alpha + v(x) - v\left(f_0(x) + \textstyle \sum_{\ell=1}^L w_\ell f_\ell(x)\right) \geq 0 & & \forall (x, w) \in  X \times W. \label{eq:disc_affine}
\end{align}

When the auxiliary function $v(x)$ is chosen to be nonlinear in $x$, the left-hand side of \eqref{eq:disc_affine} is no longer disturbance-affine in $w$. This nonlinearity prevents the paramaterized robust counterpart method of Section \ref{sec:lie_decomposed} from directly being used to decompose \eqref{eq:disc_affine}, and was noted as an obstacle in scalable synthesis of discrete-time optimal control laws in \cite{Han_2018}.

\subsection{Robustification through Lifting} 

In order to robustify \eqref{eq:disc_affine}, we will employ a lifting variable $\tilde{x}$ under the constraint that $\tilde{x} = f(x)$. The support set $\Omega$ of variables $(x, \tilde{x}, w)$ is

\begin{align}
    \Omega = \left\{(x, \tilde{x}, w) \in X^2 \times W(x) \mid \tilde{x} = \textstyle f_0(x) + \sum_{\ell=1}^L w_\ell f_\ell(x)\right\}. \label{eq:disc_omega}
\end{align}

The strict version of constraint \eqref{eq:disc_affine} is then equivalent to the lifted term of
\begin{align}
    \alpha + v(x) - v(\tilde{x})  > 0 & & \forall (x, \tilde{x}, u) \in \Omega. \label{eq:gen_lift}
\end{align}

By using Remark \ref{rmk:equality} to represent the equalities constraint of \eqref{eq:disc_omega}, we obtain a correspondence for \eqref{eq:gen_lift} in the form of \eqref{eq:lin_robust} as
\begin{subequations}
\label{eq:disc_correspondence}
\begin{align}
    b_0 &= \alpha + v(x) - v(\tilde{x}) & a_0 &= \tilde{x}-f_0(x)  \\
    b_\ell&= 0 & a_\ell &= -f_\ell(x).
\end{align}
\end{subequations}

The robustification of \eqref{eq:gen_lift} is equivalent to 
    \begin{subequations}
    \label{eq:disc_robust_counterpart}
    \begin{align}
    & \alpha + v(x) - v(\tilde{x}) > b^T \zeta(x, \tilde{x}) + (\tilde{x}-f_0(x))^T \mu(x, \tilde{x})& & \forall (x, \tilde{x}) \in X^2 \label{eq:disc_robust_counterpart_crit} \\
    & G_s^T \zeta_s(x, \tilde{x}) = 0 & & \forall s = 1..N_s \\
    & \textstyle \sum_{s=1}^{N_s} (A_s^T \zeta_s(x, \tilde{x}))_\ell -f_\ell(x)^T \mu(x, \tilde{x}) = 0 & & \forall \ell=1..L\\
     & \zeta_s(x, \tilde{x}) \in K_s^* & & \forall s = 1..N_s, (x, \tilde{x}) \in X^2. \label{eq:disc_robust_zeta} \\
      & \mu_i(x, \tilde{x}) \in \R & & \forall i = 1..n, (x, \tilde{x}) \in X^2.\label{eq:disc_robust_mu}
    \end{align}
    \end{subequations}


Theorem \ref{thm:multipliers_cont_poly} can be used to prove that $(\zeta(x, \tilde{x}), \zeta(x, \tilde{x}))$ have continuous selections under A1-A5 and A5', and polynomial selections if A6' also holds. 

This discrete-time robustification is generally computationally favorable if $2n < L$, given that the maximal size of a Gram matrix involved in \ac{SOS} constraints for \eqref{eq:disc_affine} (in degree $k$) will transform from $\binom{n+L+k}{k}$ to $\binom{2n+k}{k}$.

\subsection{Discrete-Time Robustified Peak Estimation}
We demonstrate the discrete-time robustification procedure on the peak estimation problem in \eqref{eq:peak_un_disc}. In the specific case of polytope-structured uncertainty $W = \{w \in \R^L \mid A w \leq b\}$ with $b \in \R^m$ \eqref{eq:w_set}, the robustification of \eqref{eq:peak_un_disc} using the expression in \eqref{eq:disc_robust_counterpart} is
\begin{subequations}
\label{eq:peak_un_disc_rob}
\begin{align}
    d^* = & \ \inf_{v, \gamma, \alpha} \quad \gamma + T \alpha \label{eq:peak_un_disc_rob_obj}\\
    & {\gamma} \geq {v(x)}  & &   \forall x \in X_0 \label{eq:peak_un_disc_rob_init}\\
&\alpha + v(x) - v(\tilde{x}) > b^T \zeta(x, \tilde{x}) + (\tilde{x} - f_0(x) )^T \mu(x, \tilde{x})& & \forall (x, \tilde{x}) \in X^2 \\
    & A_\ell^T \zeta(x, \tilde{x}) - f_\ell(x)^T \mu(x, \tilde{x}) = 0 & & \forall \ell \in 1..n \\
    & v(x) \geq  p(x) \label{eq:peak_cont_disc_rob_p}  & & \forall x \in X  \\
    &v(x) \in C(X), \ \gamma \in \R, \ \alpha \geq 0\label{eq:peak_cont_un_disc_rob_v} \\
    & \zeta \in (C_+(X^2) )^m, \ \mu \in (C(X^2))^n.
\end{align}
\end{subequations}

\begin{rmk}
    Hybrid systems with uncertainties in \ac{ODE} and reset dynamics can be analyzed and controlled by combining the continuous-time robustification in     \eqref{eq:lie_robust_counterpart}  with the discrete-time robustification \eqref{eq:disc_robust_counterpart} \cite{miller2023hybrid}.
\end{rmk}

\section{Data-Driven Setting}

\label{sec:data_driven_background}

This section reviews the $L_\infty$ bounded noise setting and its derived polytopic input constraints for $W$ \cite{cheng2015robust, dai2018moments}. We note that other input sets in a set-membership-based data-driven framework include elementwise $L_1$ noise (sparse channel disturbances), elementwise $L_2$ noise \cite{martin2022gaussian} (e.g., Chi-squared chance constraints on a Gaussian distribution), and semidefinite energy-bounded-noise \cite{waarde2020noisy}.

Samples $y$ of an unknown continuous-time system $\dot{x} = F(t, x)$ are observed according to the relation  $\dot{x}_{observed} = y = F(t, x) + \eta$ with noise term $\norm{\eta}_\infty \leq \epsilon_w$.
The ground-truth system $F(t, x)$ is represented by an affine combination 
\begin{equation}
    \textstyle \dot{x} =F(t, x)=f(t, x; w) = f_0(t, x) + \sum_{\ell=1}^L w_\ell f_\ell(t, x) \label{eq:parameter_data_driven}
\end{equation}
where the parameters $\{w_\ell\}_{\ell=1}^L$ are \textit{a-priori} unknown. The function $f_0$ represents prior knowledge of system dynamics $F$, and the dictionary functions $\{f_\ell\}$ serve to describe unknown dynamics.

The tuples $\Dc_k = (t_k, x_k,  y_k)$ for $k = 1..N_s$ observations are contained in the data $\Dc$. System parameters $w$ that are consistent with data in $\Dc$ form a set
 \begin{equation}
\label{eq:theta_x_set}
    W = \{ w \in \R^L \mid \forall k: \norm{y_k - f(t_k, x_k; w)}_\infty \leq \epsilon_w \}.
\end{equation}

The set $W$  from \eqref{eq:theta_x_set} may be described in terms of matrices
\begin{subequations}
\label{eq:poly_new_terms}
\begin{align}
\Gamma_{ik\ell} &=  [-f_{i\ell}(t_k, x_k); f_{i\ell}(t_k, x_k)] \\
h_{ik} &= \begin{bmatrix} - y_{ik} + f_{i0}(t_k, x_k) \\
 y_{ik} - f_{i0}(t_k, x_k)\end{bmatrix}.
\end{align}
\end{subequations}
as the polytope
\begin{equation}
\label{eq:poly_simple}
    W = \left\{w \in \R^L \middle| \forall i,k: \   \textstyle \sum_{\ell=1}^L \Gamma_{ik\ell} w_\ell \leq \epsilon_w + h_{ik}\right\}.
\end{equation}
The expression in \eqref{eq:poly_simple} will be written concisely as the polytope $W = \{w \mid \Gamma w \leq (\epsilon_w + h$)\}.

\begin{rmk}
The compactness and non-emptiness assumption of A4 is satisfied when the $L_\infty$ bound $\epsilon_w$ is finite and sufficiently many observations in $\Dc$ are acquired.
\end{rmk}

The set $W$ as described in \eqref{eq:poly_simple} has $m = 2 N_s n$ affine constraints, most of which are redundant. These redundant constraints can be identified and dropped through the \ac{LP} method of \cite{caron1989degenerate}.
The multiplier term $\zeta$ is $m$-dimensional, so lowering $m$ by eliminating redundant constraints is essential in creating tractable problems.

\begin{rmk}
\label{rmk:data_conservative} 
    The parameters $w$ of the true system $F(t, x)$ from \eqref{eq:parameter_data_driven} are constant in time. This constancy may be implemented by treating $w$ as new states with $\dot{w}(t) = 0$. Unfortunately, the augmentation of new states would require auxiliary functions of the form $v(t, x, w)$, which would result in non-affine expressions in $w$. The robust-counterpart-based method for data-driven systems analysis relaxes $w$ to become a time-dependent uncertainty process with $w(t) \in W$, allowing for tractable computation (by $w$-elimination) at the cost of conservatism.
\end{rmk}


 \section{Examples}
\label{sec:robust_examples}
Code to execute robust counterparts for analysis and control and to replicate figures and experiments is available at 
\url{https://github.com/Jarmill/data_driven_occ}. All source code was developed in MATLAB 2021a. Dependencies include YALMIP \cite{Lofberg2004} to form the \acp{SDP} and MOSEK \cite{mosek92} to solve them. Unless otherwise specified, the \ac{SDR} uncertainty set $W$ will be polytopic. Redundant constraints in the polytope $W$ were identified and dropped through the \ac{LP} method of \cite{caron1989degenerate}. 
When $W$ is polytopic, the $w(t)$ inputs of trajectory samples (data-driven analysis) were acquired through hit-and-run sampling \cite{kroese2013handbook} as implemented by \cite{benham2021}. In the case of semidefinite-bounded noise, the input $w(t)$ was chosen by choosing a uniformly random direction $\theta$ on the $(L-1)$-sphere and solving  the \ac{LMI} $\max_{w \in W}\theta^T w$.

\subsection{Elliptope-Disturbed  System}
This subsection performs peak estimation of a cubic system under a semidefinite-constrained disturbance process (modified from \cite{prajna2004safety}):
\begin{subequations}
\label{eq:flow_pillow}
\begin{align}
    f(t, x, w) &= \begin{bmatrix} x_2 \\ -x_1 -x_2 + x_1^3/3 + w_1 x_1 + w_2 x_1 x_2 + w_3 x_3        
    \end{bmatrix} \\
    W &= \left\{w \in \R^3:\begin{bmatrix}
        1 & w_1 & w_2 \\ w_1 & 1 & w_3 \\ w_2 & w_3 & 1
    \end{bmatrix} \in \psd^3_+ \right\}. \label{eq:elliptope}
\end{align}
\end{subequations}

The set in \eqref{eq:elliptope} is the standard convex elliptope/pillow spectahedron.
Dynamics in \eqref{eq:flow_pillow} start at $X_0 = [1.25; 0]$ and continue for $T=5$ time units in the space $X = [-0.5, 1.75]\times  [-1, 0.5]$. The first 6 bounds of maximizing $p(x) = -x_2$ along these trajectories, after performing a robust counterpart, are $p^*_{1:6} = [1, 1, 0.8952, 0.8477, 0.8471, 0.8470]$. At order $6$, the largest \ac{PSD} matrix constraint (for the $3 \times 3$ \ac{SOS}-matrix) is of size $3\binom{3 + 6}{6} = 252$ in the variables $(t, x)$ after performing robust decomposition, while the non-decomposed largest \ac{PSD} size is $3\binom{6 + 6}{6} = 2772$ in the variables $(t, x, w)$. Sample trajectories of \eqref{eq:flow_pillow} are plotted in Figure \ref{fig:flow_pillow}.

\begin{figure}[!h]
    \centering
    \includegraphics[width=0.5\linewidth]{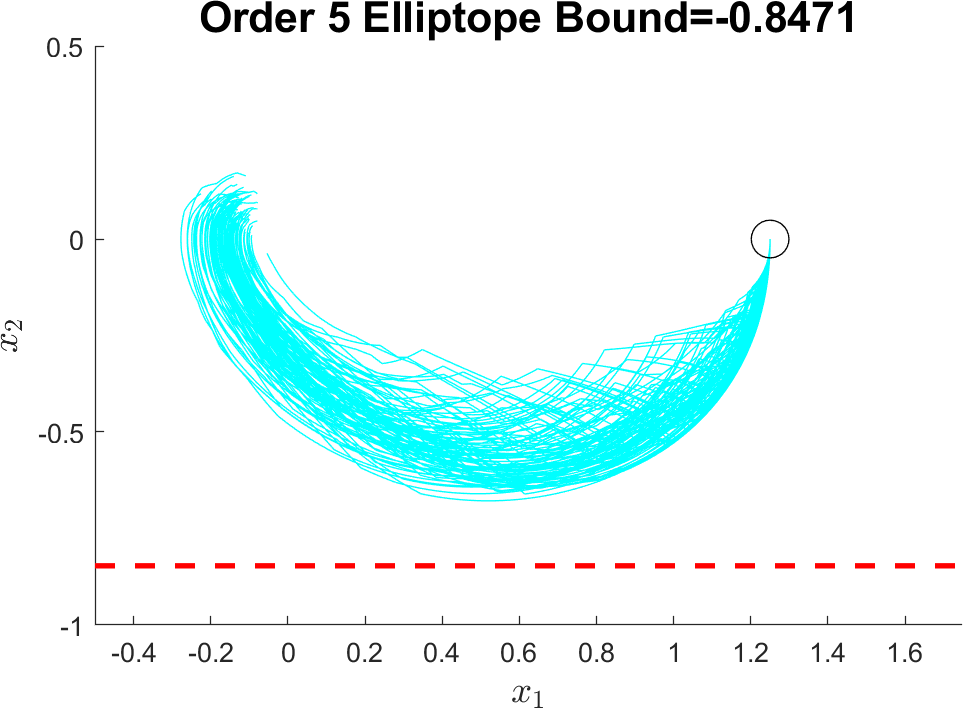}
    \caption{Order-5 bound on minimal $x_2$ for Flow \eqref{eq:flow_pillow} under elliptope-constrained noise}
    \label{fig:flow_pillow}
\end{figure}
\subsection{Data-Driven Flow System}

\label{sec:flow_poly}

The uncontrolled Flow system from \cite{prajna2004safety} is
\begin{align}
\label{eq:flow}
    \dot{x}(t) &= [x_2(t); \ -x_1(t) -x_2(t) + x^3_1(t)/3]  \qquad \forall t' \in [0, 5].
\end{align}

This subsection will perform data-driven peak and distance estimation. The derivative $\dot{x_2}$ is modeled by a cubic polynomial $\dot{x_2} = \sum_{\deg \alpha \leq 3} w_\alpha x_1^{\alpha_1} x_2^{\alpha_2}$ with 10 unknown parameters/inputs $\{w_\alpha\}$. The derivative $\dot{x}_1 = x_2$ remains known and there is no uncertainty in the first coordinate. The system model for the data-driven flow system is
\begin{align}
\label{eq:flow_data_driven}
    \dot{x}(t) &= [x_2(t); \ \textstyle \sum_{\deg \alpha \leq 3} w_\alpha(t) x_1^{\alpha_1}(t) x_2^{\alpha_2}(t)]  \qquad \forall t' \in [0, 5].
\end{align}


Figure \ref{fig:flow_observed_low_eps} visualizes $N=40$ observed data points sampled within the initial set $X_{sample} = \{x \mid (x_1-1.5)^2 + x_2^2 \leq 0.4^2\}$. The true derivative  values are the blue arrows and the $\epsilon=[0; 0.5]$-corrupted derivative observations are orange. The $N=40$ points yield $2N=80$ affine constraints, of which the polytope $W$ has $L=33$ faces (nonredundant constraints) and 7534 vertices. 

Figure \ref{fig:flow_poly_all} displays  system trajectories of \eqref{eq:flow_data_driven} for a time horizon of $T=5$ starting from the point $X_0 = (1.5, 0)$ (left, Figure \ref{fig:flow_poly_pt}) and from the circle $X_0=X_{\text{sample}}$ (right, Figure \ref{fig:flow_poly_circ}), when the uncertainty process $w(t)$ is restricted to $W$. Each case desires to maximize $p(x)=-x_2$ over the state region of $X = \{x \mid \norm{x}_2^2 \leq 8\}$.
The first 4 \ac{SOS} peak estimates in the point $X_0$ case (Figure \ref{fig:flow_poly_pt}) are $d_{1:4}^* = [2.828, 2.448, 1.018, 0.8407]$. The first four estimates in the disc $X_0$ case (Figure \ref{fig:flow_poly_circ}) are $d_{1:4}^* = [2.828, 2.557, 1.245, 0.894]$. 


\begin{figure}[ht]
    \centering
    \includegraphics[width=0.4\linewidth]{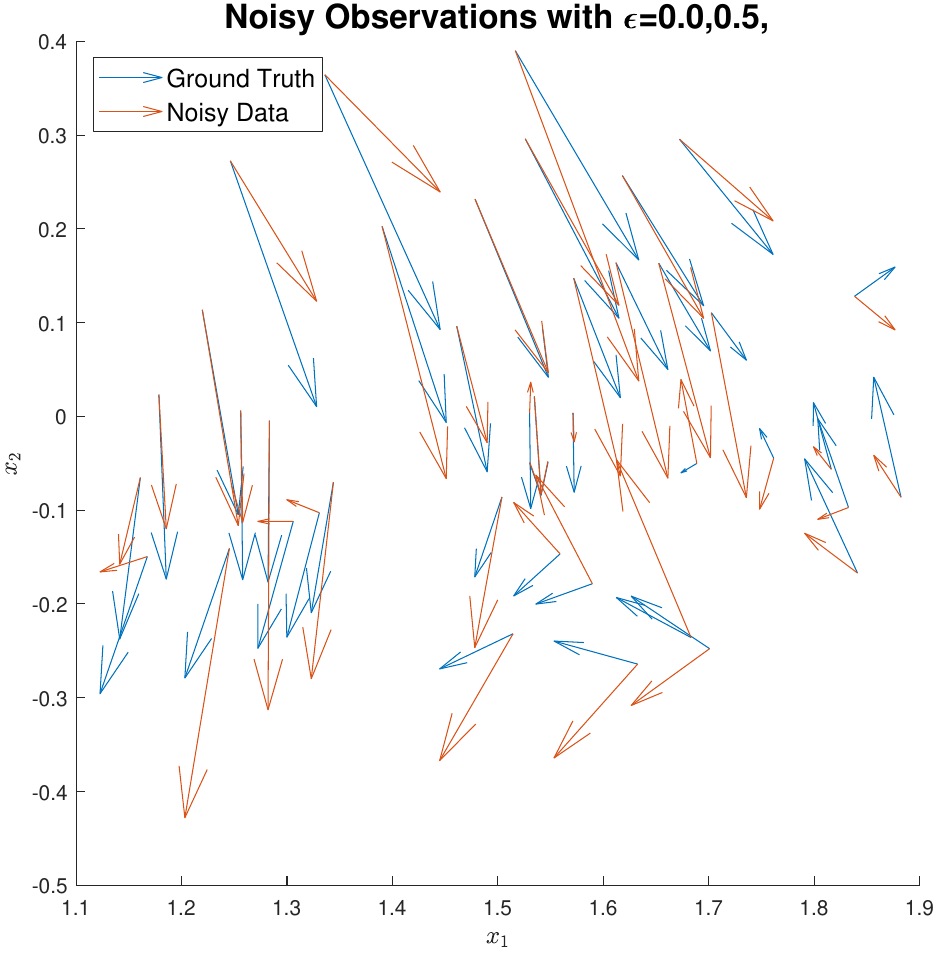}
    \caption{\label{fig:flow_observed_low_eps}Observed data of Flow system \eqref{eq:flow} within a circle}
\end{figure}

    \begin{figure}[ht]    
     \centering
     \begin{subfigure}[b]{0.48\linewidth}
         \centering
         \includegraphics[width=\linewidth]{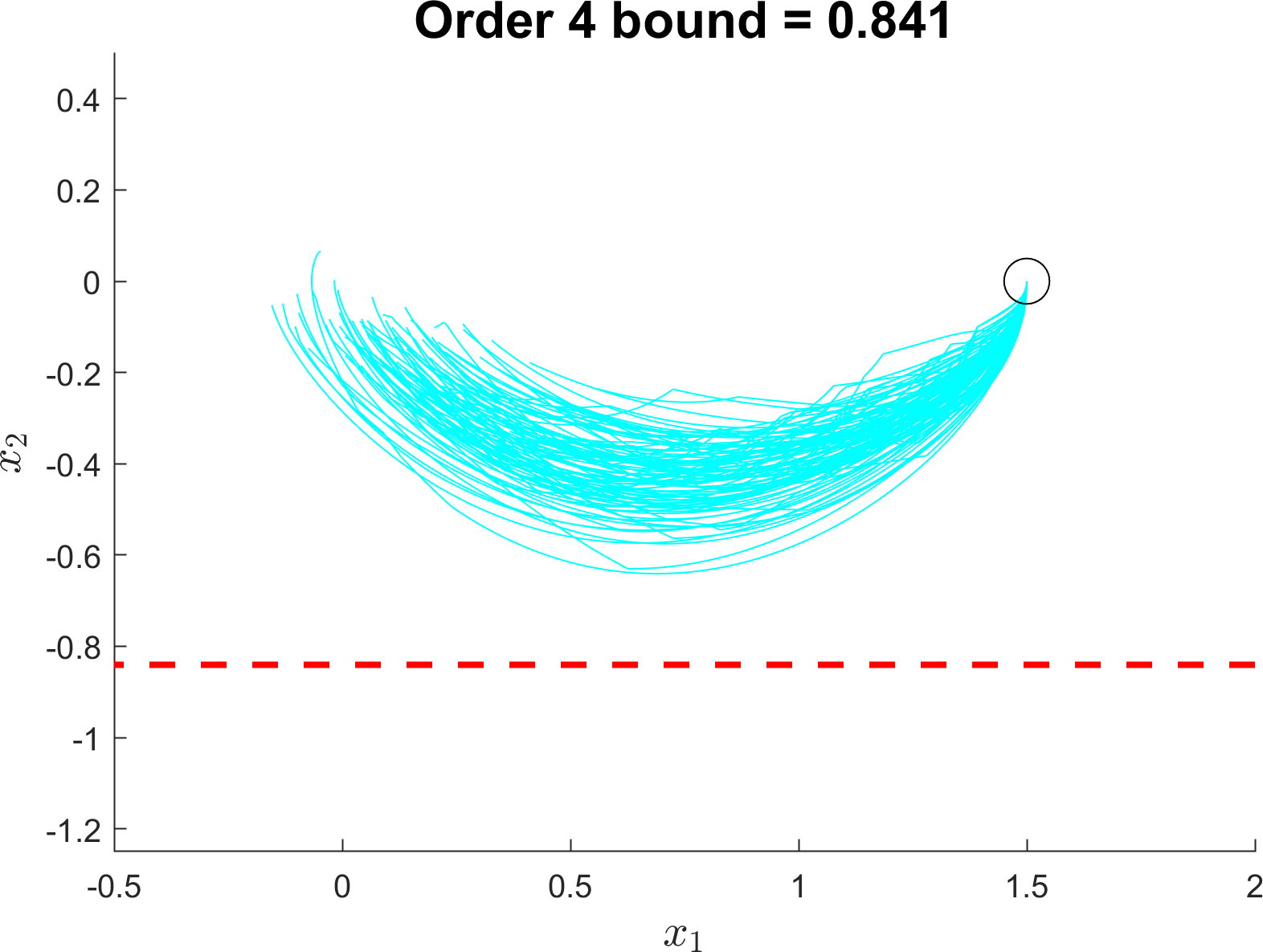}
         \caption{\label{fig:flow_poly_pt} $X_0 = (1.5, 0)$}
         
     \end{subfigure}
     \;
     \begin{subfigure}[b]{0.48\linewidth}
         \centering
         \includegraphics[width=\linewidth]{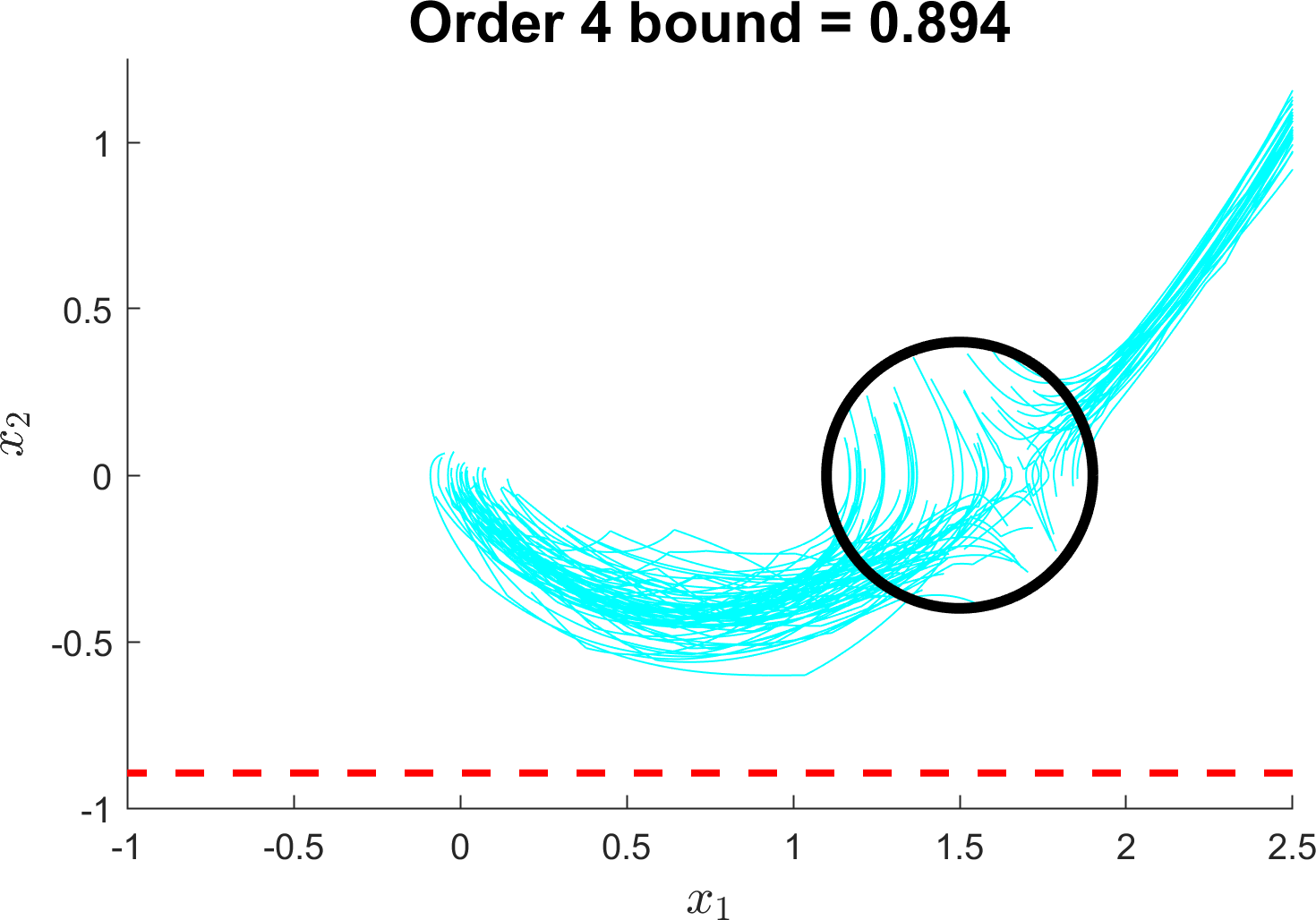}
         \caption{\label{fig:flow_poly_circ}$X_0 =X_{\text{sample}}$}
     \end{subfigure}
      \caption{\label{fig:flow_poly_all} Minimizing $x_2$ on the observed Flow system \eqref{eq:flow_data_driven} at order-4 SOS tightening}
\end{figure}

Figure \ref{fig:dist_flow_data} displays the result of distance estimation on the Flow system \eqref{eq:flow_data_driven} with $N=40$ points and $\epsilon = [0; 0.5]$. 
The initial point is $X_0 = [1; 0]$ in the state set $X = [-1, 1.25] \times [-1.25, 0.7]$, and trajectories are tracked for $T=5$ time units. The distance function is the $L_2$ distance and the red half-circle unsafe set is $X_u = \{x \mid 0.5^2 \geq - (x_1+0.25)^2 - (x_2+0.7)^2, \-(x_1+0.25)/\sqrt{2} +(x_2+0.7)^2/\sqrt{2} >=0\}$. The first 5 bounds of the robust distance estimation program are $c^*_{1:5} = [1.698\times 10^{-5}, \ 0.1936, \ 0.2003, \ 0.2009, \ 0.2013]$.

\begin{figure}[h]
    \centering
    \includegraphics[width=0.5\linewidth]{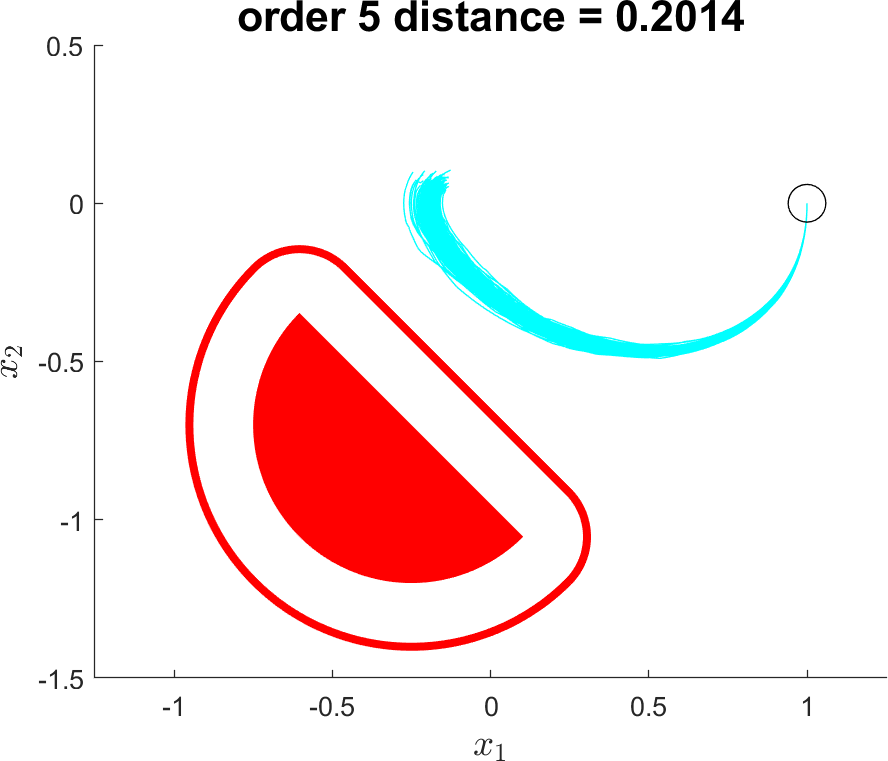}
    \caption{Distance estimate of \eqref{eq:flow} at order 5}
    \label{fig:dist_flow_data}
\end{figure}

\subsection{Twist System}

This section performs peak estimation on the Twist system from \cite{miller2021distance}
\begin{align}
    \label{eq:twist_dynamics}
    \dot{x}_i(t) &= \textstyle \sum_j B^1_{ij} x_j(t) - B^3_{ij}(4x_j^3(t) - 3x_j(t))/2,
    \end{align}
    \begin{align}
    \label{eq:twist_parameters}
    B^1 &= \begin{bmatrix}-1 & 1 & 1\\ -1 &0 &-1\\ 0 & 1 &-2\end{bmatrix} &  B^3 &=  \begin{bmatrix}-1 & 0 & -1\\ 0 &1 &1\\ 1 & 1 &0\end{bmatrix}.
       \end{align}
Choosing different parameter matrices $B^1$ (linear) and $B^3$ (cubic) yields a family of dynamical systems, many of which are attractors and some of which possess limit cycles. We note that the Twist system possesses a symmetry $x \leftrightarrow -x$, and therefore only the top portion with $x_3 \geq 0$ will be treated.
A total of $N=100$ observations with a noise bound of $\epsilon=0.5$ are taken, and are plotted in Figure \ref{fig:twist_observed}. These $N=100$ observations will induce $2Nn = 600$ affine constraints on eventual polytopes $W$.

Details of the auxiliary function \acp{LP} for distance estimation, reachable set estimation, and \ac{ROA} maximization are found in Appendix \ref{app:analysis_control_problem}. The decomposable Lie constraints in these problems (from Appendix \ref{app:analysis_control_problem}) are \eqref{eq:poly_dist_w_lie}, \eqref{eq:reach_cont_lie}, and \eqref{eq:roa_cont_lie} respectively.

\begin{figure}[!ht]
         \centering
         \includegraphics[width=0.4\linewidth]{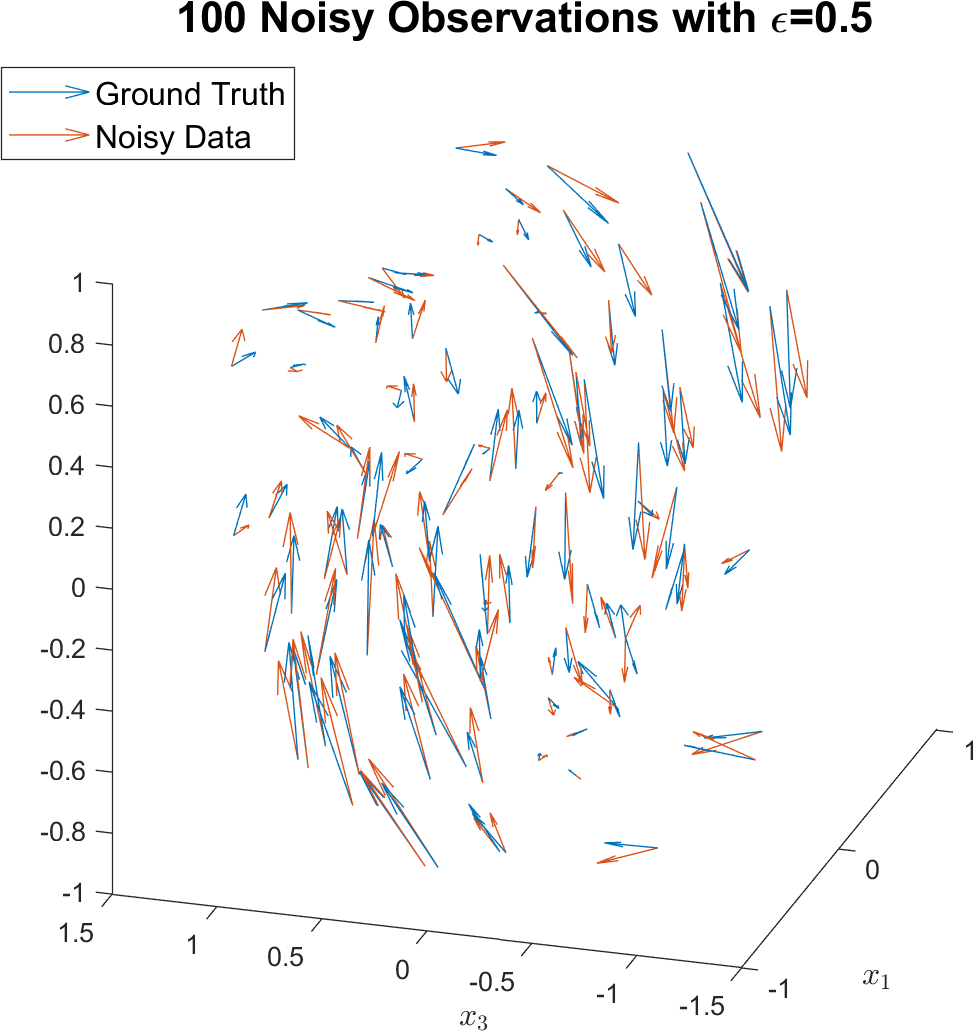}
         \caption{\label{fig:twist_observed}100 observations of Twist system \eqref{eq:twist_dynamics}}
\end{figure}

All scenarios in this subsection will find peak estimates on the maximum value of $p(x)=x_3$ of the Twist system over the space $X = \{x \mid -1 \leq x_1, x_2 \leq 1, \ 0 \leq x_3 \leq 1\}$ and time horizon $T=8$ starting at $X_0=[-1,0,0]$.

    
    Figure \ref{fig:twist_one_unknown} involves the $L=9$ case where $B^1$ is unknown (left) or when $B^3$ is unknown (right). 
    When $B^1$ is unknown, the matrix $B^1$ in \eqref{eq:twist_parameters} is replaced by a matrix of parameter inputs $w: [0, T] \rightarrow \R^{3 \times 3}$ according to Remark \ref{rmk:data_conservative} (with a similar substitution in the unknown $B^3$ case).
    
    The unknown $B^1$ case in Figure \ref{fig:twist_a} has a polytope $W$ with $m=30$ faces and peak bounds of $d_{1:3}^* = [1.000, 0.9050, 0.8174]$. The known $B^3$ case in Figure \ref{fig:twist_b} also has $m=30$ faces in its polytope with peak bounds of $d_{1:3}^* = [1.000, 0.9050, 0.8174]$. The maximal \ac{PSD} matrix size of the Lie nonpositivity constraint is 2380 pre-decomposition and 70 post-decomposition. Computational limits of the experimental platform restricted the computation of $d^*$ to maximal order 3 .

\begin{figure}[ht]
     \centering
     \begin{subfigure}[b]{0.48\linewidth}
         \centering
         \includegraphics[width=0.7\linewidth]{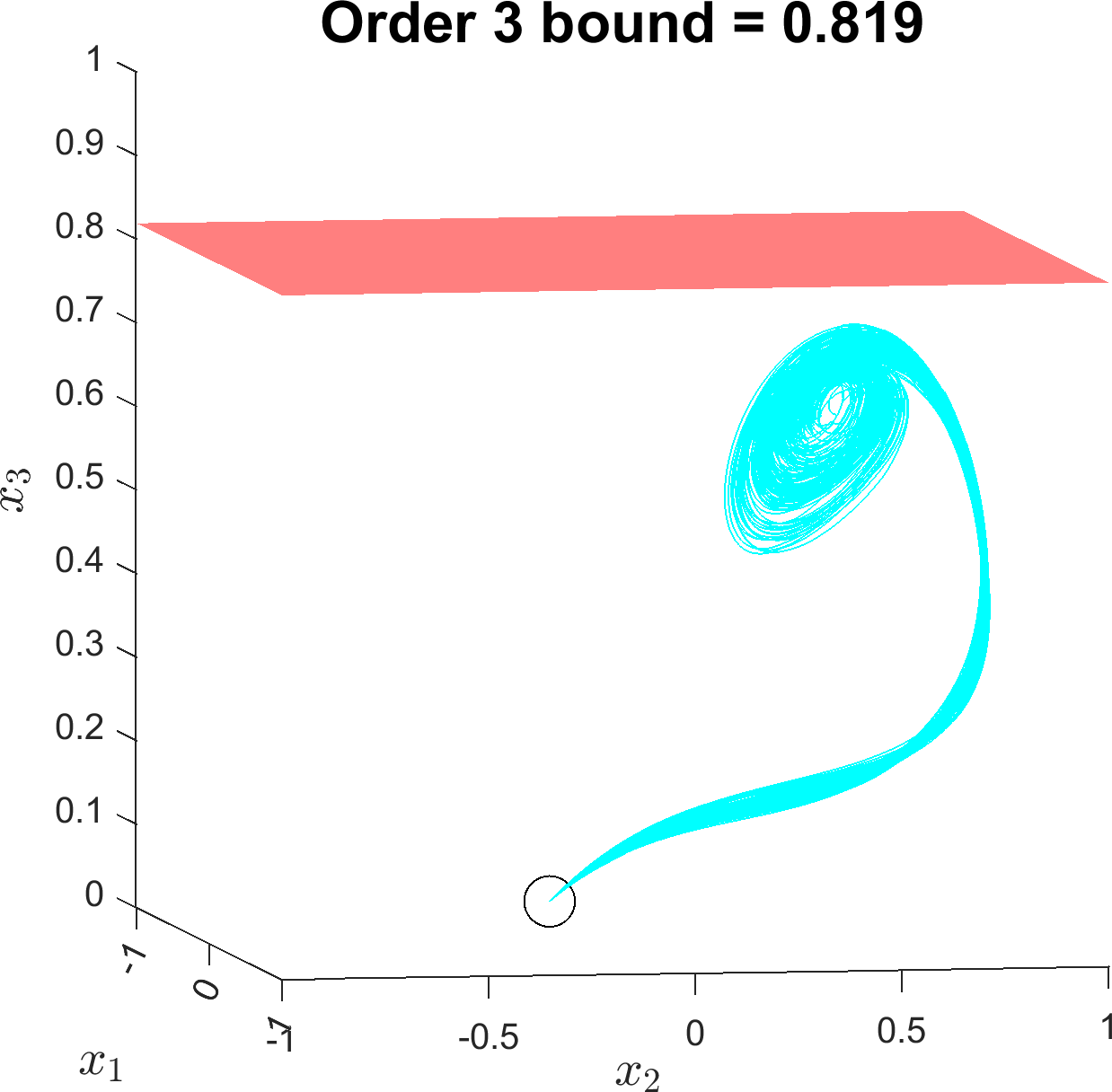}
         \caption{\label{fig:twist_a}Unknown $B^1$, Known $B^3$}
         
     \end{subfigure}
     \;
     \begin{subfigure}[b]{0.48\linewidth}
         \centering
         \includegraphics[width=0.7\linewidth]{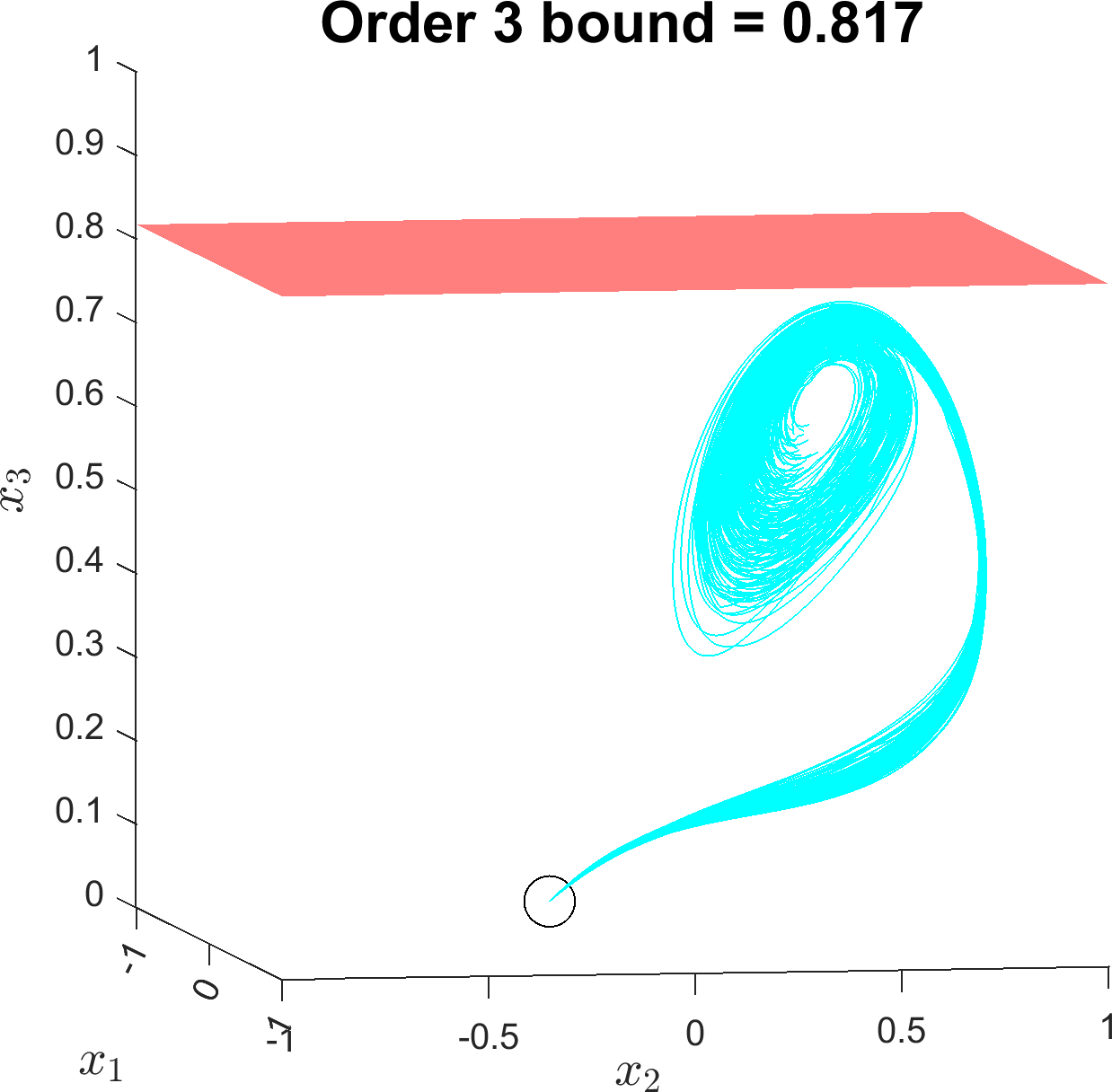}
         \caption{\label{fig:twist_b}Known $B^1$, Unknown $B^3$}
     \end{subfigure}
      \caption{\label{fig:twist_one_unknown}Twist \eqref{eq:twist_dynamics} system where either $B^1$ or $B^3$ are unknown.}
\end{figure}

When both parameters $(B^1, B^3)$ are unknown, the polytope $W$ has $L=18$ dimensions and $m=70$ nonredundant faces. The first peak estimates on this system are $d_{1:2}^* = [1.000, 0.9703]$ as plotted in Figure \ref{fig:twist_both}. At degree 2, the maximal \ac{PSD} matrix size in the Lie constraint falls from 2300 pre-decomposition and to 35 post-decomposition. The experimental platform became unresponsive in YALMIP when attempting to compile the $d^*_3$ model when $(B^1, B^3)$ are unknown.


     \begin{figure}[h]
         \centering
         \includegraphics[width=0.4\linewidth]{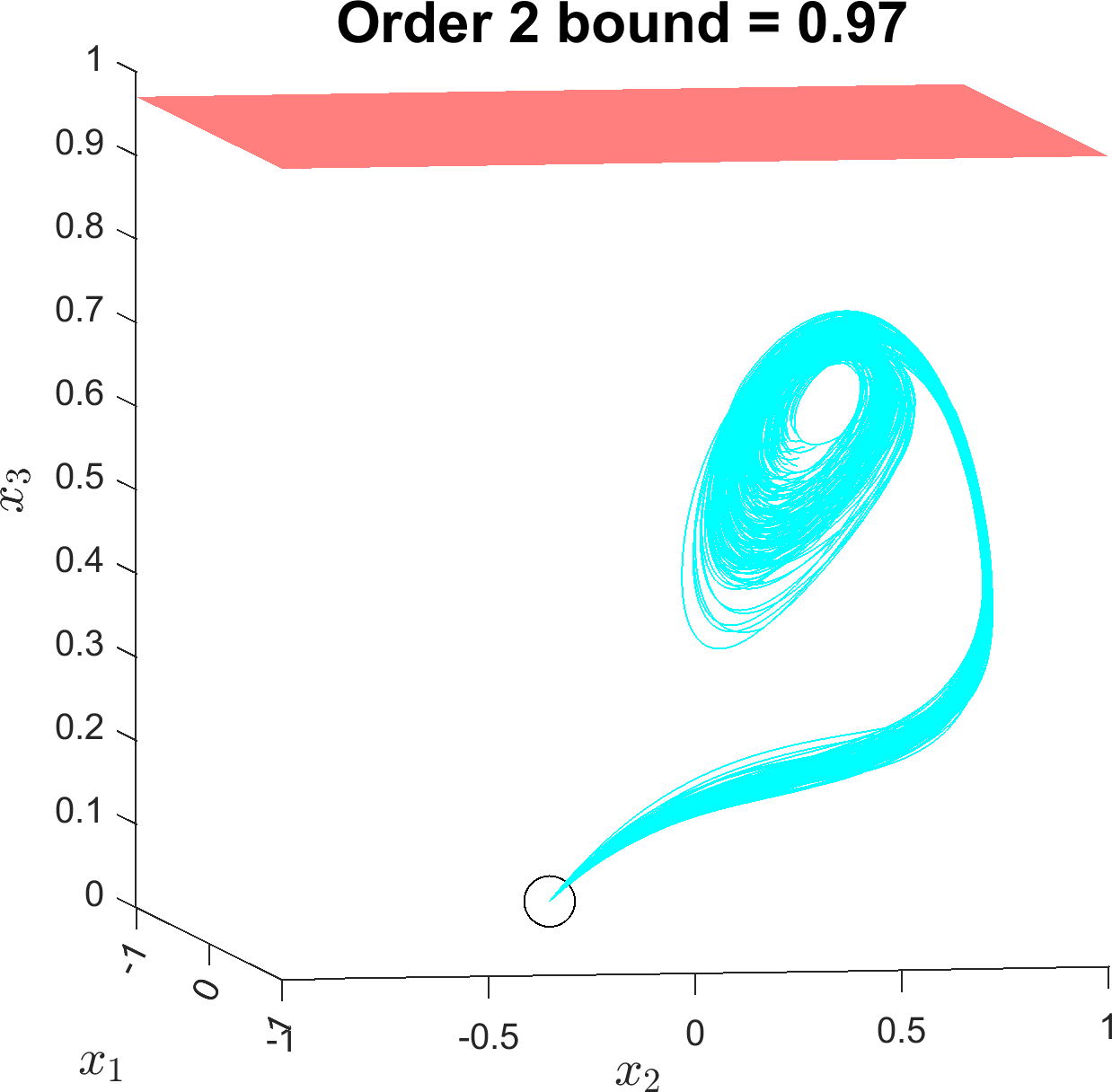}
         \caption{\label{fig:twist_both}Twist \eqref{eq:twist_dynamics} with unknown $(B^1, B^3)$}
     \end{figure}

 \subsection{Discrete-Time Peak Estimation Example}

This example performs peak estimation \eqref{eq:peak_traj_discrete} on a discrete-time system.
The ground-truth system is ((35a) from \cite{miller2021uncertain})
\begin{equation}
    x_+ = \begin{bmatrix}-0.3x_1 + 0.8 x_2 + 0.1 x_1 x_2 \\ -0.75 x_1 - 0.3 x_2 +w \end{bmatrix}  \label{eq:disc_dyn_ex}
\end{equation}

The system model for \eqref{eq:disc_dyn_ex} is a quadratic polynomial vector with unknown coefficients $(w^1, w^2)$:
\begin{equation}
    x_+ = \begin{bmatrix}\sum_{\deg \alpha \leq 2} w^1_\alpha x^{\alpha_1}_1 x^{\alpha_2}_2, &  \  \sum_{\deg \alpha \leq 2} w^2_\alpha x^{\alpha_1}_1 x^{\alpha_2}_2 \end{bmatrix}^T  \label{eq:disc_dyn_ex_model}
\end{equation}

The model in \eqref{eq:disc_dyn_ex_model} has $L=12$ unknown parameters $(w^1, w^2)$. Sampling over $X = [-2, 2]^2$ occurs $N_s = 40$ times, yielding observations corrupted by $L_\infty$-bounded noise ($\epsilon = 0.5$). 

The observed data for state transitions are pictured in Figure \ref{fig:discrete_transition}.
It is desired to bound the maximum value of $p(x) = -x_2$ along discrete-time system trajectories up to a time horizon of $T = 10$.
With a starting point of $X_0 = [-1.5, 0]$,  \ac{SOS} tightenings of the discrete-time peak estimation program in \eqref{eq:peak_un_disc_rob} produce bounds of $d^*_{1:4} = [2.0000, 1.7221, 1.1254, 0.9511]$. The order-4 bound is pictured in Figure \ref{fig:discrete_state}.
 \begin{figure}[ht]
     \centering
     \begin{subfigure}[b]{0.48\linewidth}
         \centering
         \includegraphics[width=\linewidth]{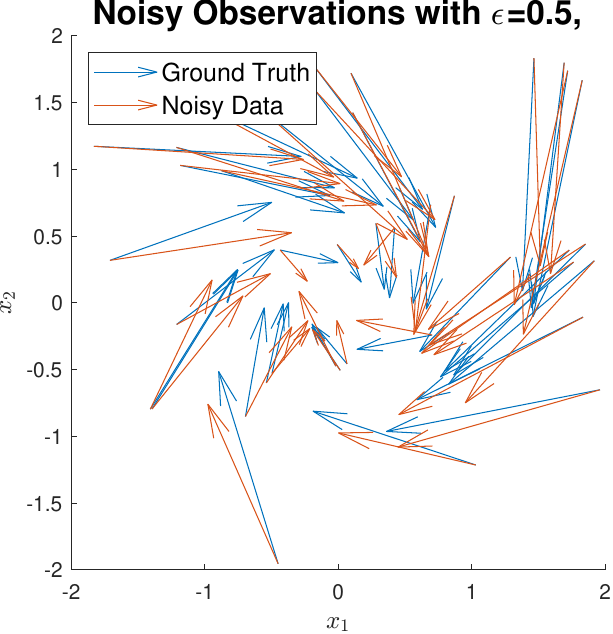}
         \caption{Observed data of \label{fig:discrete_transition} \eqref{eq:disc_dyn_ex}}
         
     \end{subfigure}
     \;
     \begin{subfigure}[b]{0.48\linewidth}
         \centering
         \includegraphics[width=\linewidth]{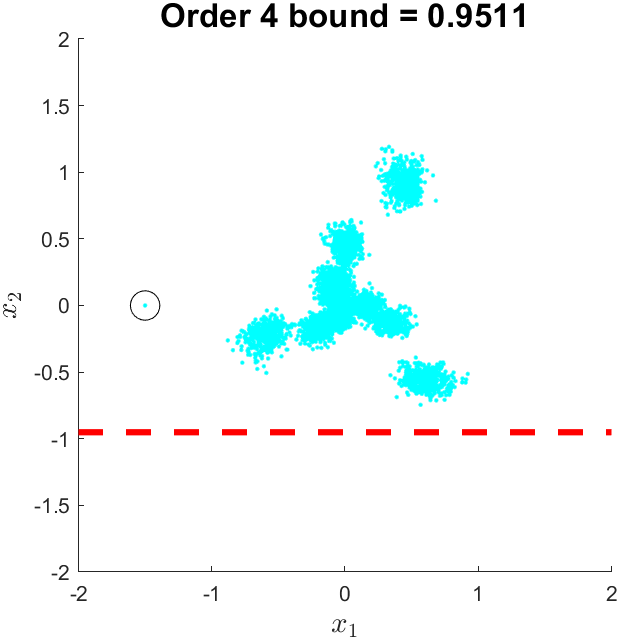}
         \caption{  \label{fig:discrete_state}Order 4 tightening}
     \end{subfigure}
      \caption{\label{fig:discrete_peak}Peak  $p(x)=-x_2$ estimation of  \eqref{fig:discrete_transition} for system model \eqref{eq:disc_dyn_ex_model} with $L_\infty$: $\epsilon=0.5$ corrupted data}
\end{figure}



 \subsection{Reachable Set Example}

Figure \ref{fig:twist_reach} illustrates data-driven reachable set estimation on the Twist system from \eqref{eq:twist_dynamics} for a time horizon of $T=8$ by \ac{SOS} tightening to the Lie-robustified \eqref{eq:reach_cont}. The 100 observations from this system are pictured in Figure \ref{fig:twist_observed}, yielding a $L=9$-dimensional polytope with 34 non-redundant faces. As the order of tightening to program  increases from 3 to 4, the red region (level set of $\omega(x)$) tightens to the spiraling attractor region of the $T=8$ reachable set. The reachable set computation problem from \eqref{eq:reach_cont} involves auxiliary  functions $v(t, x)$ and $\phi(x)$, such that $\phi(x) \geq 0$ over $X$ and $\phi(x) \geq 1$ over the true $T$-reachable set \eqref{eq:reach_true_set}.
The `volume' in the plot titles is not the true volume of the superlevel set $\{x \mid \phi(x) \geq 1\}$, but is instead the integration of the polynomial $\phi(x)$ over the Lebesgue measure as $\int_X \phi(x) dx$.

 \begin{figure}[ht]
     \centering
     \begin{subfigure}[b]{0.48\linewidth}
         \centering
         \includegraphics[width=\linewidth]{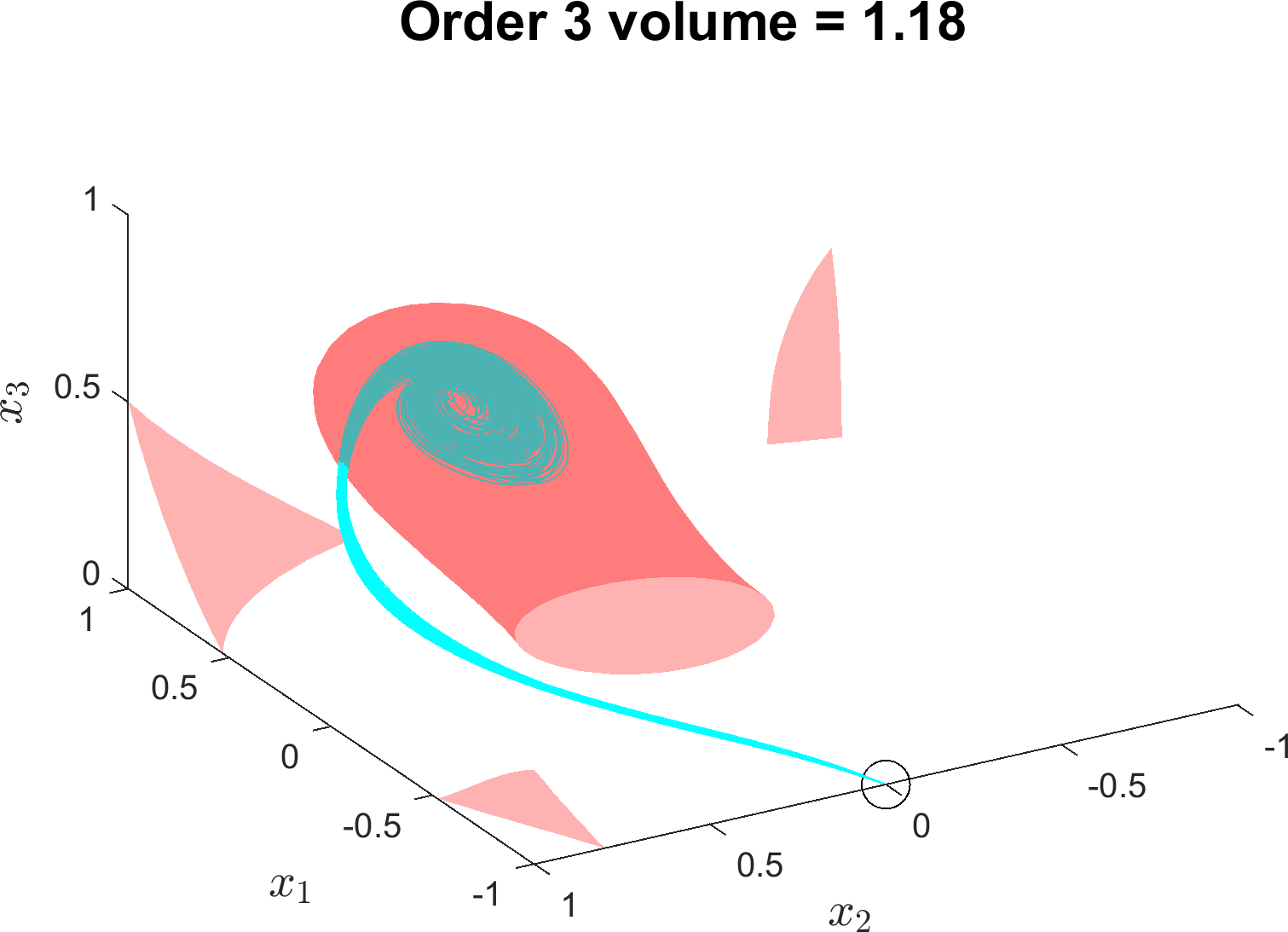}
         \caption{\label{fig:twist_reach_3}Order 3 tightening}
         
     \end{subfigure}
     \;
     \begin{subfigure}[b]{0.48\linewidth}
         \centering
         \includegraphics[width=\linewidth]{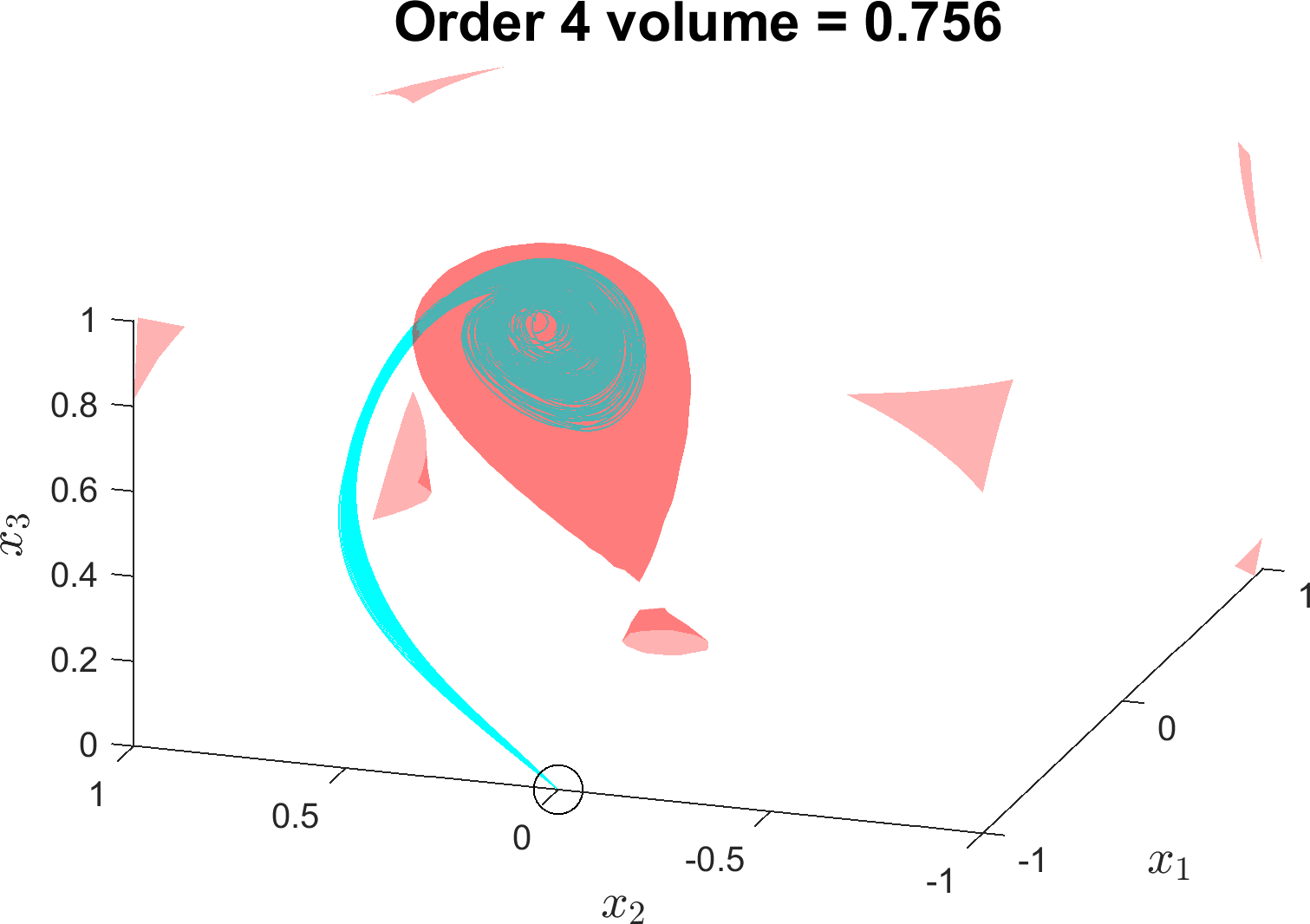}
         \caption{\label{fig:twist_reach_4}Order 4 tightening}
     \end{subfigure}
      \caption{\label{fig:twist_reach}Reachable set estimation of twist \eqref{eq:twist_dynamics} system where $B^1$ is known and $B^3$ are unknown.}
\end{figure}

 \subsection{Region of Attraction Example}

This example of \ac{ROA} maximization will concentrate on a controlled version of the Flow dynamics from \cite{prajna2004safety} under $L=6$ inputs
\begin{equation}
\label{eq:flow_controlled}
    \dot{x} = \begin{bmatrix}
        x_2(t) \\  -x_1(t) -x_2(t) + x^3_1(t)/3
    \end{bmatrix} + \begin{bmatrix} w_1 + w_2 x_1 + w_3 x_2 \\
    w_4 + w_5 x_1 + w_6 x_2\end{bmatrix}
\end{equation}
obeying the polytopic input limits
\begin{equation}
    W = \left\{w \mid \begin{matrix}\norm{[w_1; w_4]}_\infty \leq 0.1, \ \norm{[w_2; w_3; w_5; w_6]}_\infty \leq 0.15 \\ \norm{[(w_1+w_2+w_3); (w_4+w_5+w_6)]}_\infty \leq 0.3 \end{matrix} \right\}.
\end{equation}

The circle $X_T = \{x \mid 0.1^2 - (x_1-0.5)^2 - (x_2-0.5)^2 \geq 0\}$ is the destination of the \ac{ROA} problem with a time horizon of $T = 5$ and a state space of $X = [-1.5, 1.5]^2$. The \ac{WSOS} tightening of problem the Lie-robustified  \eqref{eq:roa_cont} yields bounds for the \ac{ROA} volume of $d^*_{2:6} =[9.000, \ 9.000, \ 6.717, \ 5.620, \ 5.187].$

The destination set $X_T$ is drawn in the black circle in the left subplot of Figure \ref{fig:flow_roa}. The white area is an outer approximation of the true \ac{ROA}, found as the superlevel set $\{x \mid \phi(x) \geq 1\}$ at order 6. The red area is the sublevel set $\{x \mid \phi(x) \leq 1\}$. The right subplot of Figure \ref{fig:flow_roa} draws the degree-12 polynomial function $\phi(x)$.



\begin{figure}[ht]
         \centering
     \includegraphics[width=\linewidth]{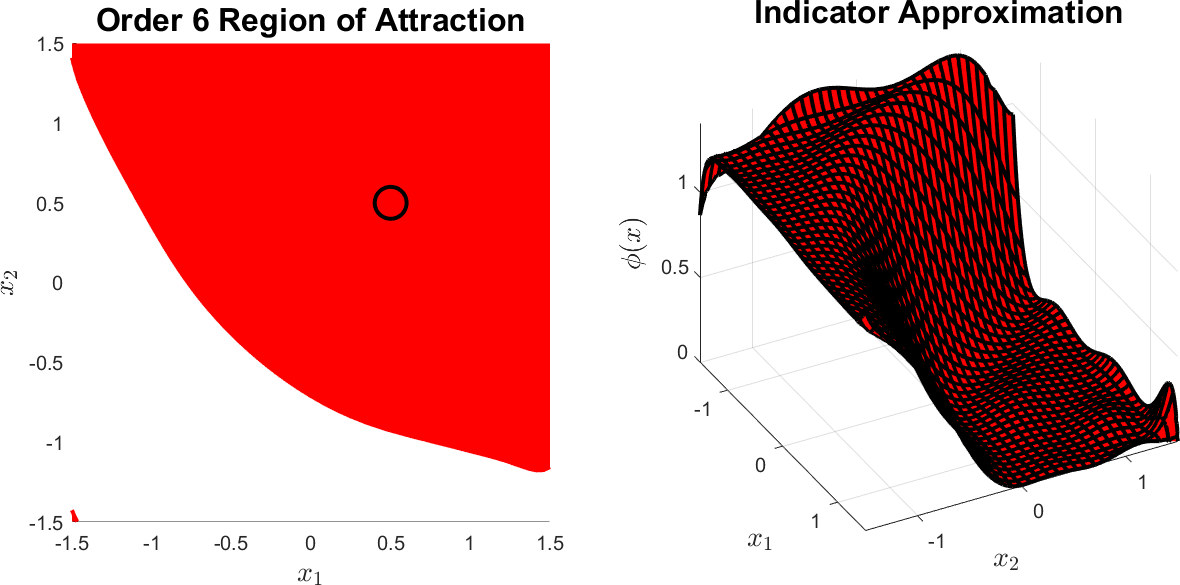}
      \caption{\label{fig:flow_roa} Order 6 \ac{ROA} for controlled Flow \eqref{eq:flow_controlled}}
\end{figure}

\section{Conclusion}
\label{sec:poly_conclusion}

This work formulated infinite-dimensional robust counterparts decomposing input-affine Lie constraints in analysis and control problems. These robust counterparts may be approximated by continuous multipliers without conservatism under compactness and regularity conditions. Elimination of the noise variables $w$ allows for solution and analysis of formerly intractable problems using the moment-\ac{SOS} hierarchy. The robust counterpart method was demonstrated on peak estimation, distance estimation, reachable set estimation, and \ac{BRS}-maximizing control problems. 
Another environment in which these robust counterparts may be employed is in data-driven systems analysis with affinely-parameterized dictionaries and  \ac{SDR} noise. 

The robust counterpart method may be used in other optimization domains with Lie constraints, such as in optimal control input penalties \cite{majumdar2014convex} and maximum controlled invariant set estimation \cite{korda2014convex}.

One future research direction is to investigate other types of exploitable uncertainty structure in dynamical systems problems.
Another avenue is to incorporate warm starts into \ac{SDP} solvers so online system estimates can be updated as more data gets added to $\mathcal{D}$. The bottleneck of all computations developed in this paper is the compuational cost of the SOS tightenings, so finding ways to speed up SOS-based SDP solution is of vital interest. An important research path from a theoretical perspective is finding circumstances in which Assumption A6' can be relaxed while still obeying polynomial approximability Theorem \ref{thm:multipliers_cont_poly}.





\section*{Acknowledgements}
The authors would like to thank Didier Henrion, Milan Korda, Victor Magron, and Roy Smith for their advice and support. The authors would also like to thank Carsten Scherer for discussions about the proof of Proposition \ref{eq:polytope_slater}.

\bibliographystyle{IEEEtran}
\bibliography{references}

\begin{thebibliography}{10}
\providecommand{\url}[1]{#1}
\csname url@samestyle\endcsname
\providecommand{\newblock}{\relax}
\providecommand{\bibinfo}[2]{#2}
\providecommand{\BIBentrySTDinterwordspacing}{\spaceskip=0pt\relax}
\providecommand{\BIBentryALTinterwordstretchfactor}{4}
\providecommand{\BIBentryALTinterwordspacing}{\spaceskip=\fontdimen2\font plus
\BIBentryALTinterwordstretchfactor\fontdimen3\font minus
  \fontdimen4\font\relax}
\providecommand{\BIBforeignlanguage}[2]{{%
\expandafter\ifx\csname l@#1\endcsname\relax
\typeout{** WARNING: IEEEtran.bst: No hyphenation pattern has been}%
\typeout{** loaded for the language `#1'. Using the pattern for}%
\typeout{** the default language instead.}%
\else
\language=\csname l@#1\endcsname
\fi
#2}}
\providecommand{\BIBdecl}{\relax}
\BIBdecl

\bibitem{helton2010semidefinite}
J.~W. Helton and J.~Nie, ``Semidefinite representation of convex sets,''
  \emph{Mathematical Programming}, vol. 122, pp. 21--64, 2010.

\bibitem{ben2009robust}
A.~Ben-Tal, L.~El~Ghaoui, and A.~Nemirovski, \emph{{Robust
  Optimization}}.\hskip 1em plus 0.5em minus 0.4em\relax Princeton University
  Press, 2009, vol.~28.

\bibitem{boyd2004convex}
S.~Boyd, S.~P. Boyd, and L.~Vandenberghe, \emph{{Convex Optimization}}.\hskip
  1em plus 0.5em minus 0.4em\relax Cambridge university press, 2004.

\bibitem{ben2015deriving}
A.~Ben-Tal, D.~Den~Hertog, and J.-P. Vial, ``Deriving robust counterparts of
  nonlinear uncertain inequalities,'' \emph{Mathematical programming}, vol.
  149, no. 1-2, pp. 265--299, 2015.

\bibitem{majumdar2014convex}
A.~Majumdar, R.~Vasudevan, M.~M. Tobenkin, and R.~Tedrake, ``{Convex
  Optimization of Nonlinear Feedback Controllers via Occupation Measures},''
  \emph{The International Journal of Robotics Research}, vol.~33, no.~9, pp.
  1209--1230, 2014.

\bibitem{korda2015controller}
M.~Korda, D.~Henrion, and C.~N. Jones, ``Controller design and value function
  approximation for nonlinear dynamical systems,'' \emph{Automatica}, vol.~67,
  pp. 54--66, 2016.

\bibitem{miller2021facial_short}
J.~Miller and M.~Sznaier, ``{Facial Input Decompositions for Robust Peak
  Estimation under Polyhedral Uncertainty},'' \emph{IFAC-PapersOnLine},
  vol.~55, no.~25, pp. 55--60, 2022, 10th IFAC Symposium on Robust Control
  Design 2022.

\bibitem{cho2002linear}
M.~J. Cho and R.~H. Stockbridge, ``{Linear Programming Formulation for Optimal
  Stopping Problems},'' \emph{SIAM Journal on Control and Optimization},
  vol.~40, no.~6, pp. 1965--1982, 2002.

\bibitem{miller2021distance}
J.~Miller and M.~Sznaier, ``{Bounding the Distance to Unsafe Sets with Convex
  Optimization},'' \emph{IEEE Trans. Automat. Contr.}, pp. 1--15, 2023.

\bibitem{aubin2011viability}
J.-P. Aubin, A.~M. Bayen, and P.~Saint-Pierre, \emph{Viability theory: new
  directions}.\hskip 1em plus 0.5em minus 0.4em\relax Springer Science \&
  Business Media, 2011.

\bibitem{mitchell2003overapproximating}
I.~M. Mitchell and C.~J. Tomlin, ``Overapproximating reachable sets by
  hamilton-jacobi projections,'' \emph{journal of Scientific Computing},
  vol.~19, no.~1, pp. 323--346, 2003.

\bibitem{lewis1980relaxation}
R.~Lewis and R.~Vinter, ``{Relaxation of Optimal Control Problems to Equivalent
  Convex Programs},'' \emph{Journal of Mathematical Analysis and Applications},
  vol.~74, no.~2, pp. 475--493, 1980.

\bibitem{henrion2008nonlinear}
D.~Henrion, J.~B. Lasserre, and C.~Savorgnan, ``Nonlinear optimal control
  synthesis via occupation measures,'' in \emph{2008 47th IEEE Conference on
  Decision and Control}.\hskip 1em plus 0.5em minus 0.4em\relax IEEE, 2008, pp.
  4749--4754.

\bibitem{fantuzzi2020bounding}
G.~Fantuzzi and D.~Goluskin, ``{Bounding Extreme Events in Nonlinear Dynamics
  Using Convex Optimization},'' \emph{SIAM Journal on Applied Dynamical
  Systems}, vol.~19, no.~3, pp. 1823--1864, 2020.

\bibitem{miller2020recovery}
J.~{Miller}, D.~{Henrion}, and M.~{Sznaier}, ``{Peak Estimation Recovery and
  Safety Analysis},'' \emph{IEEE Control Systems Letters}, pp. 1--1, 2020.

\bibitem{miller2021uncertain}
J.~Miller, D.~Henrion, M.~Sznaier, and M.~Korda, ``{Peak Estimation for
  Uncertain and Switched Systems},'' in \emph{2021 60th IEEE Conference on
  Decision and Control (CDC)}, 2021, pp. 3222--3228.

\bibitem{Henrion_2014}
D.~Henrion and M.~Korda, ``Convex computation of the region of attraction of
  polynomial control systems,'' \emph{IEEE Trans. Automat. Contr.}, vol.~59,
  no.~2, p. 297–312, Feb 2014.

\bibitem{korda2013inner}
M.~Korda, D.~Henrion, and C.~N. Jones, ``Inner approximations of the region of
  attraction for polynomial dynamical systems,'' \emph{IFAC Proceedings
  Volumes}, vol.~46, no.~23, pp. 534--539, 2013.

\bibitem{majumdar2013control}
A.~Majumdar, A.~A. Ahmadi, and R.~Tedrake, ``Control design along trajectories
  with sums of squares programming,'' in \emph{2013 IEEE International
  Conference on Robotics and Automation}.\hskip 1em plus 0.5em minus
  0.4em\relax IEEE, 2013, pp. 4054--4061.

\bibitem{kurzhanski2002ellipsoidal}
A.~B. Kurzhanski and P.~Varaiya, ``On ellipsoidal techniques for reachability
  analysis. part i: external approximations,'' \emph{Optimization methods and
  software}, vol.~17, no.~2, pp. 177--206, 2002.

\bibitem{harwood2016efficient}
S.~M. Harwood and P.~I. Barton, ``Efficient polyhedral enclosures for the
  reachable set of nonlinear control systems,'' \emph{Mathematics of Control,
  Signals, and Systems}, vol.~28, no.~1, p.~8, 2016.

\bibitem{coogan2020mixed}
S.~Coogan, ``Mixed monotonicity for reachability and safety in dynamical
  systems,'' in \emph{2020 59th IEEE Conference on Decision and Control
  (CDC)}.\hskip 1em plus 0.5em minus 0.4em\relax IEEE, 2020, pp. 5074--5085.

\bibitem{bramburger2023auxiliary}
J.~J. Bramburger and G.~Fantuzzi, ``Auxiliary functions as koopman observables:
  Data-driven polynomial optimization for dynamical systems,'' \emph{arXiv
  preprint arXiv:2303.01483}, 2023.

\bibitem{dai2020semi}
T.~Dai and M.~Sznaier, ``{A Semi-Algebraic Optimization Approach to Data-Driven
  Control of Continuous-Time Nonlinear Systems},'' \emph{IEEE Control Systems
  Letters}, vol.~5, no.~2, pp. 487--492, 2020.

\bibitem{scherer2006matrix}
C.~W. Scherer and C.~W. Hol, ``{Matrix Sum-of-Squares Relaxations for Robust
  Semi-Definite Programs},'' \emph{Mathematical programming}, vol. 107, no.~1,
  pp. 189--211, 2006.

\bibitem{putinar1993compact}
M.~Putinar, ``{Positive Polynomials on Compact Semi-algebraic Sets},''
  \emph{Indiana University Mathematics Journal}, vol.~42, no.~3, pp. 969--984,
  1993.

\bibitem{lasserre2009moments}
J.~B. Lasserre, \emph{{Moments, Positive Polynomials And Their Applications}},
  ser. Imperial College Press Optimization Series.\hskip 1em plus 0.5em minus
  0.4em\relax World Scientific Publishing Company, 2009.

\bibitem{hol2005sum}
C.~Hol and C.~Scherer, ``Sum of squares relaxations for robust polynomial
  semi-definite programs,'' \emph{IFAC Proceedings Volumes}, vol.~38, no.~1,
  pp. 451--456, 2005.

\bibitem{henrion2013convex}
D.~Henrion and M.~Korda, ``Convex computation of the region of attraction of
  polynomial control systems,'' \emph{IEEE Trans. Automat. Contr.}, vol.~59,
  no.~2, p. 297–312, Feb 2014.

\bibitem{abate2021fossil}
A.~Abate, D.~Ahmed, A.~Edwards, M.~Giacobbe, and A.~Peruffo, ``Fossil: a
  software tool for the formal synthesis of lyapunov functions and barrier
  certificates using neural networks,'' in \emph{Proceedings of the 24th
  International Conference on Hybrid Systems: Computation and Control}, 2021,
  pp. 1--11.

\bibitem{papp2013semidefinite}
D.~Papp and F.~Alizadeh, ``Semidefinite characterization of sum-of-squares
  cones in algebras,'' \emph{SIAM journal on optimization}, vol.~23, no.~3, pp.
  1398--1423, 2013.

\bibitem{alizadeh2003second}
F.~Alizadeh and D.~Goldfarb, ``Second-order cone programming,''
  \emph{Mathematical programming}, vol.~95, no.~1, pp. 3--51, 2003.

\bibitem{zheng2023sum}
Y.~Zheng and G.~Fantuzzi, ``Sum-of-squares chordal decomposition of polynomial
  matrix inequalities,'' \emph{Mathematical Programming}, vol. 197, no.~1, pp.
  71--108, 2023.

\bibitem{Han_2018}
W.~Han and R.~Tedrake, ``Controller synthesis for discrete-time polynomial
  systems via occupation measures,'' \emph{2018 IEEE/RSJ International
  Conference on Intelligent Robots and Systems (IROS)}, Oct 2018.

\bibitem{miller2023hybrid}
J.~Miller and M.~Sznaier, ``{Peak Estimation of Hybrid Systems with Convex
  Optimization},'' 2023, arxiv:2303.11490.

\bibitem{cheng2015robust}
Y.~Cheng, M.~Sznaier, and C.~Lagoa, ``{Robust Superstabilizing Controller
  Design from Open-Loop Experimental Input/Output Data},''
  \emph{IFAC-PapersOnLine}, vol.~48, no.~28, pp. 1337--1342, 2015.

\bibitem{dai2018moments}
T.~Dai and M.~Sznaier, ``{A Moments Based Approach to Designing MIMO Data
  Driven Controllers for Switched Systems},'' in \emph{2018 IEEE Conference on
  Decision and Control (CDC)}.\hskip 1em plus 0.5em minus 0.4em\relax IEEE,
  2018, pp. 5652--5657.

\bibitem{martin2022gaussian}
T.~Martin, T.~B. Sch{\"o}n, and F.~Allg{\"o}wer, ``Gaussian inference for
  data-driven state-feedback design of nonlinear systems,'' \emph{arXiv
  preprint arXiv:2211.05639}, 2022.

\bibitem{waarde2020noisy}
H.~J. van Waarde, M.~K. Camlibel, and M.~Mesbahi, ``From noisy data to feedback
  controllers: non-conservative design via a {M}atrix {S}-{L}emma,'' \emph{IEEE
  Trans. Automat. Contr.}, 2020.

\bibitem{caron1989degenerate}
R.~Caron, J.~McDonald, and C.~Ponic, ``A degenerate extreme point strategy for
  the classification of linear constraints as redundant or necessary,''
  \emph{Journal of Optimization Theory and Applications}, vol.~62, no.~2, pp.
  225--237, 1989.

\bibitem{Lofberg2004}
J.~L{\"{o}}fberg, ``{YALMIP : A Toolbox for Modeling and Optimization in
  MATLAB},'' in \emph{In Proceedings of the CACSD Conference}, Taipei, Taiwan,
  2004.

\bibitem{mosek92}
\BIBentryALTinterwordspacing
M.~ApS, \emph{The MOSEK optimization toolbox for MATLAB manual. Version 9.2.},
  2020. [Online]. Available:
  \url{https://docs.mosek.com/9.2/toolbox/index.html}
\BIBentrySTDinterwordspacing

\bibitem{kroese2013handbook}
D.~P. Kroese, T.~Taimre, and Z.~I. Botev, \emph{Handbook of monte carlo
  methods}.\hskip 1em plus 0.5em minus 0.4em\relax John Wiley \& Sons, 2013,
  vol. 706.

\bibitem{benham2021}
\BIBentryALTinterwordspacing
T.~Benham, ``Uniform distribution over a convex polytope,'' 2012. [Online].
  Available:
  \url{https://www.mathworks.com/matlabcentral/fileexchange/34208-uniform-distribution-over-a-convex-polytope}
\BIBentrySTDinterwordspacing

\bibitem{prajna2004safety}
S.~Prajna and A.~Jadbabaie, ``{Safety Verification of Hybrid Systems Using
  Barrier Certificates},'' in \emph{International Workshop on Hybrid Systems:
  Computation and Control}.\hskip 1em plus 0.5em minus 0.4em\relax Springer,
  2004, pp. 477--492.

\bibitem{korda2014convex}
M.~Korda, D.~Henrion, and C.~Jones, ``{Convex Computation of the Maximum
  Controlled Invariant Set For Polynomial Control Systems},'' \emph{SIAM
  Journal on Control and Optimization}, vol.~52, no.~5, pp. 2944--2969, 2014.

\bibitem{llavona1986approximation}
J.~G. Llavona, \emph{Approximation of continuously differentiable
  functions}.\hskip 1em plus 0.5em minus 0.4em\relax Elsevier, 1986.

\bibitem{aubin2009set}
J.-P. Aubin and H.~Frankowska, \emph{{Set-Valued Analysis}}.\hskip 1em plus
  0.5em minus 0.4em\relax Springer Science \& Business Media, 2009.

\bibitem{rockafellar2009variational}
R.~T. Rockafellar and R.~J.-B. Wets, \emph{{Variational Analysis}}.\hskip 1em
  plus 0.5em minus 0.4em\relax Springer Science \& Business Media, 2009, vol.
  317.

\bibitem{denel1979extensions}
J.~Denel, ``Extensions of the continuity of point-to-set maps: applications to
  fixed point algorithms,'' \emph{Point-to-Set Maps and Mathematical
  Programming}, pp. 48--68, 1979.

\bibitem{stiemke1915positive}
E.~Stiemke, ``{\"U}ber positive l{\"o}sungen homogener linearer gleichungen,''
  \emph{Mathematische Annalen}, vol.~76, no. 2-3, pp. 340--342, 1915.

\bibitem{daniilidis2013lower}
A.~Daniilidis, M.~A. Goberna, M.~A. L{\'o}pez, and R.~Lucchetti, ``Lower
  semicontinuity of the feasible set mapping of linear systems relative to
  their domains,'' \emph{Set-Valued and Variational Analysis}, vol.~21, no.~1,
  pp. 67--92, 2013.

\bibitem{henrion2010properties}
R.~Henrion and A.~Seeger, ``{On Properties of Different Notions of Centers for
  Convex Cones},'' \emph{Set-Valued and Variational Analysis}, vol.~18, pp.
  205--231, 2010.

\bibitem{henrion2010inradius}
------, ``{Inradius and Circumradius of Various Convex Cones Arising in
  Applications},'' \emph{Set-Valued and Variational Analysis}, vol.~18, pp.
  483--511, 2010.

\bibitem{tao2011introduction}
T.~Tao, \emph{{An Introduction to Measure Theory}}.\hskip 1em plus 0.5em minus
  0.4em\relax American Mathematical Society Providence, RI, 2011, vol. 126.

\bibitem{pivato2003analysis}
M.~Pivato, ``{Analysis, Measure, and Probability: A visual introduction},''
  2003.

\bibitem{young1942generalized}
L.~C. Young, ``{Generalized Surfaces in the Calculus of Variations},''
  \emph{Annals of mathematics}, vol.~43, pp. 84--103, 1942.

\bibitem{liberzon2011calculus}
D.~Liberzon, \emph{{Calculus of Variations and Optimal Control Theory: A
  Concise Introduction}}.\hskip 1em plus 0.5em minus 0.4em\relax Princeton
  University Press, 2011.

\bibitem{gouveia2013lifts}
J.~Gouveia, P.~A. Parrilo, and R.~R. Thomas, ``{Lifts of Convex Sets and Cone
  Factorizations},'' \emph{Mathematics of Operations Research}, vol.~38, no.~2,
  pp. 248--264, 2013.

\end{thebibliography}

\appendix
     \section{Polynomial Approximation of the Auxiliary Function}
\label{app:poly_robust_aux}
This appendix uses arguments from \cite{fantuzzi2020bounding} to prove that Problem \eqref{eq:poly_peak_w} may be approximated with $\varepsilon$-accuracy by a polynomial auxiliary function. Assumptions A1-A4 are in place, ensuring that $\Omega = [0, T] \times X \times W$ is compact.

Let $\varepsilon > 0$ be an optimality bound, and let $v(t, x) \in C^1([0, T] \times X)$ be an auxiliary function that satisfies constraints \eqref{eq:poly_peak_w_lie} and \eqref{eq:poly_peak_w_cost} with
\begin{equation}
    \sup_{x \in X_0} v(0, x) \leq d^* + \varepsilon.
\end{equation}

The $i$-th coordinate of dynamics $\dot{x} = F(t, x, w) = f_0(t,x) + \sum_{w=1}^L w_\ell f_\ell(t,x)$ from \eqref{eq:disturbance_affine} is indexed by $F_i(t, x, w)$.

A tolerance $\eta > 0$ may be chosen as (Equation 4.10 of \cite{fantuzzi2020bounding}):
\begin{align}
    \eta < \frac{\varepsilon}{\max\left(2, 2T, 2T \norm{F_1}_{C^0(\Omega)}, \ldots, 2T \norm{F_n}_{C^0(\Omega)} \right)}.
\end{align}

A polynomial approximation of the $C^1$ function $v$ may be performed by Theorem 1.1.2 of \cite{llavona1986approximation} to find a polynomial $w \in \R[t, x]$ such that $\norm{v(t, x) - w(t, x)}_{C^1(\Omega)} < \eta$ uniformly. The perturbed auxiliary function,
\begin{equation}
    V(t, x) = w(t, x) + \varepsilon(1 - t/(2T)),
\end{equation}
satisfies the following strict inequalities from \eqref{eq:poly_peak_w} (equation 4.12 in \cite{fantuzzi2020bounding}),
\begin{subequations}
\label{eq:poly_peak_w_pert}
\begin{align}
    & d^* + (5/2) \varepsilon > V(0, x)  & &  \forall x \in X_0 \\
    & \Lie_{F(t, x, w)} V(t, x) < 0 & & \forall (t, x, w) \in \Omega \label{eq:poly_peak_w_pert_lie}\\
    & V(t, x) < p(x) & & \forall (t, x) \in [0, T] \times X. \label{eq:poly_peak_w_pert_cost}
\end{align}
\end{subequations}

There then exists some finite $d$ such that the polynomial $V(t, x)$ with an optimal solution of (at most) $d^* + (5/2)\varepsilon$ has degree $d$ \cite{fantuzzi2020bounding}.

\section{Continuity of Multipliers}
\label{app:continuity_robust}

This section will prove the continuous selection portion of Theorem \ref{thm:multipliers_cont_poly}.


\subsection{Set-Valued Preliminaries}
We first review concepts in set-valued analysis.
Given spaces $Y$ and $Z$, a set-valued function $F: Y \rightrightarrows Z$ is a mapping between the power sets $F: 2^Y \rightarrow 2^Z$. A set $E \subset Y$ is inside the domain $\text{Dom}(F)$ if $F(E) \neq \varnothing$. In this section, we will be utilizing point-set maps $(F: Y \rightarrow 2^Z)$.

\begin{defn}[Definition 1.4.2 of \cite{aubin2009set}] \label{def:lsc}The function $F$ is \textbf{lower semicontinuous} at $y \in \text{Dom}(F)$ if, for every sequence $\{y_k\}$ converging to $y$ ($\{y_k\}\rightarrow y$), there exists a converging sequence $\{z_k \in F(y)\}$ converging to an element $z \in F(y)$. The map $F$ is lower semicontinuous if it is lower semicontinuous at each $y \in \text{Dom}(F)$.
\end{defn}

\begin{defn}
    Let $F_0, F_1$ be set-valued maps $Y \rightrightarrows Z$. The containment relation $F_0 \subseteq F_1$ holds if $\forall y \in Y: \ F_0(y) \subseteq F_1(y)$.
\end{defn}

\begin{rmk}
Lower semicontinuity in Definition \ref{def:lsc} is also called \textbf{inner semicontinuity} in \cite{rockafellar2009variational}.
\end{rmk}

\begin{defn}[Definition 1 of \cite{denel1979extensions}]
\label{def:rho_decreasing}
A family of set-valued maps $\{S_\rho: Y \rightrightarrows Z\}_{\rho\geq 0}$ is a $\boldsymbol{\rho}$\textbf{-decreasing family} if $\forall \rho \geq \rho' \geq 0, \ y \in Y: S_{\rho} \subseteq S_{\rho'}$.
\end{defn}

\begin{defn}[Definition 2 of \cite{denel1979extensions}]
\label{def:rho_dense}
    A $\rho$-decreasing family $\{S_\rho\}_{\rho \geq 0}$ is \textbf{dense} if $S_0(y) \subseteq \textrm{Closure}\left(\cup_{\rho>0} S_\rho(y)\right)$ for all $y \in Y$.
\end{defn}

\begin{defn}[Definition 3 of \cite{denel1979extensions}]
    A $\rho$-decreasing family $\{S_\rho\}_{\rho \geq 0}$ is \textbf{pseudo-lower-continuous} at $y$ if for all sequences $\{y_k\}\rightarrow y$, parameters $\rho > \rho' > 0$, and points $z \in S_\rho(y)$, there exists an  $N \in \N$ and a sequence $\{z_k\} \rightarrow z$ such that $\forall k \geq N: z_k \in S_{\rho'}(y_k)$. The family is pseudo-lower-continuous if it is pseudo-lower-continuous at all $y \in Y$.
\end{defn}

\begin{cor}[Excerpt of Remark 5 of \cite{denel1979extensions}] 
\label{cor:rho_0_cont}
    If a $\rho$-decreasing family  $\{S_\rho\}_{\rho \geq 0}$ is pseudo-lower-continuous and dense, then $S_0$ is lower semicontinuous.
\end{cor}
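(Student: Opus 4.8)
The plan is to check the definition of lower semicontinuity (Definition \ref{def:lsc}) for $S_0$ directly. Fix $y \in \text{Dom}(S_0)$, an element $z \in S_0(y)$, and an arbitrary sequence $y_k \to y$; the goal is to produce a sequence $z_k \in S_0(y_k)$ with $z_k \to z$. I would build this sequence in three stages: approximate $z$ from inside the fibers $S_\rho(y)$ with $\rho > 0$ using density, push each approximant into the neighboring fibers $S_{\rho'}(y_k)$ via pseudo-lower-continuity, and then diagonalize.

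First, by density (Definition \ref{def:rho_dense}), $z$ lies in $\textrm{Closure}\bigl(\cup_{\rho>0} S_\rho(y)\bigr)$, so I can pick parameters $\rho_j > 0$ and points $z^{(j)} \in S_{\rho_j}(y)$ with $\norm{z^{(j)} - z} < 1/j$ for each $j \in \N$. Next, for each fixed $j$, apply pseudo-lower-continuity at $y$ with the sequence $y_k \to y$, the parameter pair $\rho_j > \rho_j/2 > 0$, and the point $z^{(j)} \in S_{\rho_j}(y)$: this yields an index $N_j$ and a sequence $\{z^{(j)}_k\}_k$ with $z^{(j)}_k \to z^{(j)}$ as $k \to \infty$ and $z^{(j)}_k \in S_{\rho_j/2}(y_k)$ for all $k \geq N_j$. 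Since $\rho_j/2 \geq 0$, the $\rho$-decreasing property (Definition \ref{def:rho_decreasing}) gives $S_{\rho_j/2}(y_k) \subseteq S_0(y_k)$, hence $z^{(j)}_k \in S_0(y_k)$ for $k \geq N_j$ (in particular $S_0(y_k)$ is nonempty for large $k$, so the construction is not vacuous). Then choose $M_j \geq \max\{N_j,\, M_{j-1}+1\}$ large enough that $\norm{z^{(j)}_k - z^{(j)}} < 1/j$ for all $k \geq M_j$; this makes $\{M_j\}$ strictly increasing.

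Finally, define $z_k := z^{(j)}_k$ whenever $M_j \leq k < M_{j+1}$ (and let $z_k$ be arbitrary for the finitely many indices $k < M_1$). Then $z_k \in S_0(y_k)$ for all $k \geq M_1$, and for $k \in [M_j, M_{j+1})$ the triangle inequality gives $\norm{z_k - z} \leq \norm{z^{(j)}_k - z^{(j)}} + \norm{z^{(j)} - z} < 2/j$; as $k \to \infty$ we have $j \to \infty$, so $z_k \to z$. This is precisely lower semicontinuity of $S_0$ at $y$, and since $y \in \text{Dom}(S_0)$ was arbitrary, $S_0$ is lower semicontinuous.

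I expect no deep obstacle — the argument is a nested selection followed by a diagonal extraction — but the step needing the most care is the bookkeeping of the $\rho$-parameters: density only furnishes approximants in $\cup_{\rho>0} S_\rho(y)$, and returning those points to $S_0(y_k)$ requires the inclusion $S_{\rho'} \subseteq S_0$, so each level $\rho_j$ must be split into two \emph{strictly positive} levels (here $\rho_j$ and $\rho_j/2$) before pseudo-lower-continuity is invoked. The secondary subtlety is arranging the thresholds $M_j$ so that the diagonal sequence $\{z_k\}$ inherits the convergence $z^{(j)}_k \to z^{(j)}$ along its tail while $j \to \infty$.
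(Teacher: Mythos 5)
Your argument is correct. Note that the paper does not actually prove this corollary — it is imported verbatim as an excerpt of Remark~5 of \cite{denel1979extensions} — so there is no in-paper proof to compare against; what you have written is a self-contained verification from the three definitions, and it is the right one. The three-stage structure (density to approximate $z \in S_0(y)$ by points $z^{(j)} \in S_{\rho_j}(y)$ with $\rho_j > 0$, pseudo-lower-continuity applied to the strictly positive pair $\rho_j > \rho_j/2 > 0$ to transport each $z^{(j)}$ into the fibers over $y_k$, and the monotonicity $S_{\rho_j/2} \subseteq S_0$ to land back in $S_0(y_k)$ before diagonalizing) is exactly the mechanism behind Denel's remark, and your bookkeeping of the thresholds $M_j$ is sound: the tail estimate $\norm{z_k - z} < 2/j$ for $k \in [M_j, M_{j+1})$ gives convergence since the $M_j$ are strictly increasing. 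You also correctly identified the one place where care is genuinely needed — pseudo-lower-continuity is only stated for $\rho > \rho' > 0$, so one cannot descend to level $0$ in a single step and must interpose a second strictly positive level before invoking the decreasing property. Two cosmetic caveats: the paper's Definition of lower semicontinuity is stated somewhat loosely (it does not quantify over all $z \in F(y)$), and you prove the stronger, standard version, which is what the paper actually uses downstream; and your use of norms implicitly assumes the target space is normed, which holds in the paper's setting ($Z = K^* \times \R^r$) but is worth flagging if one wanted the corollary in full topological generality.
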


\subsection{Discussion of Assumptions A1'-A5'}

This subsection provides commentary on conditions for which A1'-A5' are valid.

\begin{rmk}
    Continuity of problem data (A3') over the compact $Y$ (A2') implies that all problem entries are finite.
\end{rmk}

\begin{rmk}
    Assumption A4' is a Slater-type condition ensuring feasibility equivalence of the robust inequality and the robust counterpart at each $y \in \textrm{dom}(W)$.
\end{rmk}

Assumption A5' will be satisfied for several common patterns in systems analysis. We list and prove A5' for some of these sets.

\begin{proposition}
    Translates of centrally symmetric sets will satisfy A5'
\end{proposition}
\begin{proof}
    Let us express the set $W$ as $\{w \in \R^L \mid C w + e_+ \in K, \ -C w + e_- \in K\}$. The matrix $A$ for this set is $A = [C; -C]$. For any $\psi \in \textrm{int}(K)$, the choice of $\hat{\zeta} = [\psi; \psi]$ will satisfy $\hat{\zeta} \in \textrm{int}(K)^2$ and $A^T \hat{\zeta} = C \psi - C \psi = 0$.
\end{proof}

The following propositions will rely on Stiemke's alternative (hyperplane separation) generalized to cones:
\begin{lem}[\cite{stiemke1915positive}]
    \label{lem:stiemke}
    Let $K \in \R^z$ be a cone and $\mathcal{A}: \R^L \rightarrow \R^z$ be a linear operator with adjoint $\mathcal{A}^*$. The following statements are strong alternatives:
    \begin{subequations}
    \label{eq:stiemke}
        \begin{align}
            \exists w \in \R^L: & \qquad \mathcal{A}(w) \in K, \ \mathcal{A}(w) \neq 0\\
            \exists \hat{\zeta} \in \textrm{int}(K): & \qquad \mathcal{A}^*(z) = 0. \label{eq:stiemke_a5}
        \end{align}
    \end{subequations}
\end{lem}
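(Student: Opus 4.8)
To prove Lemma~\ref{lem:stiemke}, the plan is to treat it as a conic theorem of the alternative and verify both halves of the dichotomy --- that the two alternatives cannot hold together, and that at least one must hold --- under the (here harmless) standing hypotheses that $K$ is a closed, convex, pointed cone with nonempty interior. All three cones used in this paper (the nonnegative orthant, the second-order cone, and the positive semidefinite cone) satisfy these hypotheses and are moreover self-dual, so $K = K^*$ and the certificate $\hat\zeta$ lives unambiguously in $\textrm{int}(K) = \textrm{int}(K^*)$.

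The easy half is that the two statements are mutually exclusive. If $w$ realizes the first alternative in \eqref{eq:stiemke} and $\hat\zeta$ realizes \eqref{eq:stiemke_a5}, the adjoint identity gives $\inp{\hat\zeta}{\mathcal{A}(w)} = \inp{\mathcal{A}^*(\hat\zeta)}{w} = 0$. But $\mathcal{A}(w) \in K \setminus \{0\}$ while $\hat\zeta$ is interior to the dual cone, and any interior point of the dual cone is strictly positive on $K \setminus \{0\}$: choosing $\varepsilon > 0$ with the ball $B(\hat\zeta,\varepsilon) \subseteq K^*$, one has $\inp{\hat\zeta}{k} = \inp{\hat\zeta - \varepsilon k/\norm{k}}{k} + \varepsilon\norm{k} \geq \varepsilon\norm{k} > 0$ for every $k \neq 0$. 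This contradicts $\inp{\hat\zeta}{\mathcal{A}(w)} = 0$, so at most one alternative holds.

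The substantive half is that the failure of the first alternative forces the second. Failure of the first means precisely $\textrm{range}(\mathcal{A}) \cap K = \{0\}$. Fix any $\psi_0 \in \textrm{int}(K^*)$ (nonempty since $K$ is closed and pointed) and form the base $B = \{k \in K : \inp{\psi_0}{k} = 1\}$. A short argument using closedness of $K$ and strict positivity of $\psi_0$ on $K \setminus \{0\}$ shows $B$ is compact and convex, omits the origin, and that every nonzero $k \in K$ is a positive multiple of a point of $B$. Since $\textrm{range}(\mathcal{A})$ is a linear subspace disjoint from the compact convex set $B$, apply a separating-hyperplane theorem to obtain $\hat\zeta$ and $\gamma$ with $\inp{\hat\zeta}{v} \leq \gamma < \inf_{k \in B}\inp{\hat\zeta}{k}$ for all $v \in \textrm{range}(\mathcal{A})$. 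Because the supremum is taken over a subspace, $\inp{\hat\zeta}{v} = 0$ identically on $\textrm{range}(\mathcal{A})$ --- equivalently $\mathcal{A}^*(\hat\zeta) = 0$ --- and $\gamma \geq 0$; scaling then yields $\inp{\hat\zeta}{k} > 0$ for every nonzero $k \in K$, i.e.\ $\hat\zeta \in \textrm{int}(K^*)$, which is the desired certificate.

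The main obstacle is bookkeeping rather than a deep idea: one must justify that $\textrm{int}(K^*) \neq \varnothing$ and that the base $B$ is compact, both of which rely on $K$ being closed and pointed, and one must pass to $\overline{K}$ if $K$ is supplied only as a possibly non-closed convex cone. For the cones relevant to this paper all of these points are immediate, so after this conic packaging the result is just the classical Stiemke alternative of \cite{stiemke1915positive}.
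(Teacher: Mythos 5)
Your proof is correct and follows exactly the route the paper intends: the lemma is stated there with only a citation to \cite{stiemke1915positive} and the parenthetical hint ``hyperplane separation,'' and your argument---mutual exclusivity via strict positivity of dual-interior points on $K\setminus\{0\}$, plus strict separation of the subspace $\textrm{range}(\mathcal{A})$ from a compact base of $K$---is the standard way to fill that in. Two points you handled well and should keep explicit: the certificate your construction produces lives in $\textrm{int}(K^*)$ rather than the $\textrm{int}(K)$ written in \eqref{eq:stiemke_a5}, which is harmless only because every cone the paper applies this lemma to is self-dual; and the paper's general definition of a cone does not include closedness, convexity, or pointedness, all of which your argument correctly requires, with pointedness in particular being what makes $\textrm{int}(K^*)$ nonempty and the base compact.
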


\begin{rmk}
    Statement \eqref{eq:stiemke_a5} is a specific case of Assumption A5'.
\end{rmk}

\begin{proposition}
    \ac{PSD} matrices of constant trace will satisfy A5'.
\end{proposition}
\begin{proof}
    Let us express the spectahedron $W$ in terms of matrices $E, \{A_\ell\}_{\ell=1}^L \in \psd^s$ as $\{w \in \R^L \mid E + \sum_{i=1}^L A_\ell w \in \psd^s_+\}$. If all matrices $E + \sum_{i=1}^L A_\ell w$ have a trace that is constant in $w$, then $\textrm{Tr}(E + \sum_{i=1}^L A_\ell w) = \textrm{Tr}(E)$ and $\textrm{Tr}(\sum_{i=1}^L A_\ell w) = 0$. Given that the only \ac{PSD} matrix with trace $0$ is the zero matrix, it holds that there does not exists a $w \in \R^L$ such that $\sum_{i=1}^L A_\ell w \in \psd_+^s$ and $\sum_{i=1}^L A_\ell w \neq 0$. A5' is therefore satisfied by Stiemke's alternative (Lemma \ref{lem:stiemke}).
\end{proof}

\begin{proposition}
\label{eq:polytope_slater}
    Compact and nonempty polytopes $W = \{w \in \R^L \in \Gamma w \leq h\}$ with dimensions $\Gamma \in \R^{m \times n}, h \in \R^n$ will satisfy A5'.
\end{proposition}
\begin{proof}

This proof will proceed by contradiction. Assume there exists an $w_0 \in \R^L$ such that $\Gamma w_0 \leq 0$, $w_0 \neq 0$. 
For any $\tau \geq 0$ and feasible $w \in W$, the quantity $w + \tau w_0$ would then be feasible with $\Gamma (w + t w_0) = \Gamma w + t (\Gamma w_0) = \Gamma w + t 0 \leq b$. However, the quantity $w + \tau w_0$ with $\tau \geq 0$ forms a ray in $\R^L$. This forms a contradiction: it is not possible for the compact $W$ to include all points on an unbounded ray. It is therefore not possible to form a $w_0\neq 0$ with $\Gamma w_0 \leq 0$, which implies the existence of a $\hat{\zeta} \in \R^m_{>0}$ with $\Gamma^T\hat{\zeta}=0$ by Stiemke's alternative (Lemma \ref{lem:stiemke}).


    
\end{proof}


\subsection{Lower Semicontinuity of Strict Robust Counterpart Multipliers with Constant Parameters}
\label{sec:robust_continuity}
This subsection will analyze continuity properties of the strict semi-infinite inequality \eqref{eq:lin_robust_strict}.



We define a $\rho$-indexed family of set-valued maps $S_\rho: Y \rightrightarrows Z$ as the $\rho$-modified solution map to \eqref{eq:lin_robust}:
\begin{subequations}
\label{eq:sol_zeta}
\begin{align}
    S_{\rho > 0}(y) &= \left\{(\zeta, \beta) \in Z: \begin{array}{r} e^T \zeta + a_0^T\beta +\rho \leq b_0 \\ G^T \zeta = 0 \\ A^T \zeta + a_\bullet^T \beta = b_\bullet \end{array}\right\}, \\
    S_{0}(y) &= \left\{(\zeta, \beta) \in Z: \begin{array}{r} e^T \zeta + a_0^T\beta  < b_0 \\ G^T \zeta = 0 \\ A^T \zeta + a_\bullet^T \beta = b_\bullet \end{array}\right\} \label{eq:sol_zeta_0} \\
    S_{0}^\beta (y) &= \left\{\beta \in \R^L: \ \exists \zeta \mid (\zeta, \beta) \in S_0(\zeta, \beta)\right\}.\label{eq:sol_beta_0}
\end{align}
\end{subequations}

The set $S_0^\beta(y)$ from \eqref{eq:sol_beta_0} is the $\beta$-projection of the solution map \eqref{eq:sol_zeta_0}, and is equivalently the feasible set of solutions $\beta$ for \eqref{eq:lin_robust_strict_param}.

The semi-infinite strict program \eqref{eq:lin_robust_strict} has a solution at the parameter $y$ if $S_0(y) \neq \varnothing$.

\begin{lem}
\label{lem:s_rho_decrease}
The family $\{S_\rho\}_{\rho \geq 0}$ from \eqref{eq:sol_zeta} is a $\rho$-decreasing family (Def. \ref{def:rho_decreasing}).
\end{lem}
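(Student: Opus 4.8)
The plan is to verify the defining condition of a $\rho$-decreasing family (Definition \ref{def:rho_decreasing}) directly from the explicit description of the solution maps in \eqref{eq:sol_zeta}: namely, that for every $\rho \geq \rho' \geq 0$ and every $y \in Y$ we have $S_\rho(y) \subseteq S_{\rho'}(y)$. Since the two maps $S_{\rho>0}$ and $S_0$ differ only in the first (inequality) constraint row, while the two linear rows $G^T\zeta = 0$ and $A^T\zeta + a_\bullet^T\beta = b_\bullet$ and the cone membership $(\zeta,\beta) \in Z$ are identical across all $\rho \geq 0$, the entire argument reduces to comparing the first constraints.

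First I would take an arbitrary $y \in Y$, an arbitrary pair $\rho \geq \rho' \geq 0$, and an arbitrary point $(\zeta,\beta) \in S_\rho(y)$. I would then split into cases according to whether $\rho' = 0$ or $\rho' > 0$. In the case $\rho > 0$: membership in $S_\rho(y)$ gives $e^T\zeta + a_0^T\beta + \rho \leq b_0$; since $\rho' \leq \rho$ we have $e^T\zeta + a_0^T\beta + \rho' \leq e^T\zeta + a_0^T\beta + \rho \leq b_0$, which is exactly the first constraint of $S_{\rho'}(y)$ when $\rho' > 0$, and strengthens to $e^T\zeta + a_0^T\beta < b_0$ when $\rho' = 0$ because $\rho' = 0 < \rho$ forces the bound to be strict (indeed $e^T\zeta + a_0^T\beta \leq b_0 - \rho < b_0$). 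The case $\rho = 0$ can only pair with $\rho' = 0$, for which $S_\rho = S_{\rho'}$ trivially. In all cases the remaining constraints ($G^T\zeta = 0$, $A^T\zeta + a_\bullet^T\beta = b_\bullet$, $(\zeta,\beta)\in Z$) are inherited verbatim, so $(\zeta,\beta) \in S_{\rho'}(y)$.

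Since $y$, $\rho$, $\rho'$, and $(\zeta,\beta)$ were arbitrary, this establishes $\forall \rho \geq \rho' \geq 0,\ \forall y \in Y:\ S_\rho(y) \subseteq S_{\rho'}(y)$, which is precisely Definition \ref{def:rho_decreasing}. There is essentially no obstacle here — the statement is a bookkeeping exercise — but the one point that deserves explicit care is the boundary case $\rho' = 0$, where one must note that the weak inequality with a strictly positive slack $\rho$ upgrades to the strict inequality required by \eqref{eq:sol_zeta_0}; a reader might otherwise worry that a point on the boundary $e^T\zeta + a_0^T\beta = b_0$ could sneak through, but such a point would violate $e^T\zeta + a_0^T\beta + \rho \leq b_0$ for $\rho > 0$ and hence is not in $S_\rho(y)$ in the first place. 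I would write the proof in three or four lines, making the case distinction on $\rho'$ explicit and invoking the identical-constraints observation for brevity.
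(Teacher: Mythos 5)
Your proof is correct and follows the same route as the paper's (one-line) argument: the parameter $\rho$ enters only through the scalar inequality $e^T\zeta + a_0^T\beta + \rho \leq b_0$, so shrinking $\rho$ only relaxes that constraint while all other constraints are unchanged. Your explicit treatment of the boundary case $\rho' = 0$, where the positive slack $\rho > 0$ upgrades the weak inequality to the strict one required by \eqref{eq:sol_zeta_0}, is a detail the paper leaves implicit but is handled correctly here.
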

\begin{proof}
The tolerance $\rho$ only appears in the linear inequality $e^T \zeta + a_0^T\beta +\rho \leq b_0$. The maps therefore satisfy $\forall \rho > \rho' \geq 0: \ S_\rho \subseteq S_{\rho'}$.
\end{proof}

\label{eq:robust_continuity_slater}

Assumption A5' will allow us to use a strong strict Slater condition \cite{daniilidis2013lower} to prove lower semicontinuity of $S_\rho$.



An element $(\zeta, \beta) \in Z$ is a \textit{Slater point} of $S_\rho(y)$ if $(\zeta, \beta) \in S_\rho(y)$ and $\zeta \in \textrm{int}(K)$.

For every $(\zeta, \beta) \in S_\rho(y)$ with $\rho > 0$, we can construct a $\hat{\zeta} \in \textrm{int}(K)$ such that $(\zeta + \hat{\zeta}, \beta) \in S_\rho(y)$ and $(\zeta + \hat{\zeta}, \beta)$ is a Slater point of $S_{\rho/2}(y)$.

By assumption A5', there exists a $\hat{\zeta} \in \textrm{int}(K)$ such that $A(y)^T \hat{\zeta} = 0$ and $G(y)^T \hat{\zeta} = 0$. Let us choose $\hat{\zeta}$ such that $e^T \hat{\zeta} = \rho/2$. Then we have
\begin{subequations}
\begin{align}
    e^T \zeta + \alpha_0 \beta + 2(\rho/2) & \leq b_0 \\
    e^T \zeta + (e^T \hat{\zeta}) + \alpha_0 \beta + \rho/2 & \leq b_0
\end{align}
\end{subequations}
and that  $G^T (\zeta + \hat{\zeta}) = G^T \zeta,$ $A^T (\zeta + \hat{\zeta}) + a^T_\bullet \beta = A^T \zeta + a^T_\bullet \beta$, 
thus ensuring that $(\zeta+\hat{\zeta}, \beta) \in S_{\rho/2}(y)$. 
The (strong) Slater point characterization \cite{daniilidis2013lower} states that the existence of Slater points are necessary and sufficient to prove lower-semicontinuity in case of $y$-perturbations on the left-hand-side (A4). Because there exists a Slater point $(\zeta+\hat{\zeta}, \beta) \in S_{\rho/2}(y)$ for each $(\zeta, \beta) \in S_{\rho}(y)$, that  $S_\rho$ is a $\rho$-decreasing family ($S_\rho \subseteq S_{\rho/2}$), $S_\rho$ has closed convex images for $\rho>0$, and $S_\rho$ sends a compact set $Y$ (A2') to a Banach space $(\textrm{aff}(K^*) \times \R^r)$, it holds that $S_\rho$ is lower-semicontinuous on its domain.

 \subsection{Continuity of Multipliers}

 
 \label{sec:continuity_lie_robust}



We now review a condition for a continuous selection:
\begin{defn}
    Let $F: Y \rightrightarrows Z$ be a set-valued map. The function $\sigma: Y \rightarrow Z$ is a \textbf{selection} for $F$ if $\forall y \in Y: \ \sigma(y) \in F(y)$.
\end{defn}

\begin{thm}[Michael's Theorem, Thm. 9.1.2 of \cite{aubin2009set}]
\label{thm:michaels}
    Let $Y$ be a compact metric space and $Z$ be a Banach Space. If $F: Y \rightrightarrows Z$ has closed convex images for each $y \in Y$, then there exists a continuous selection $\sigma$ for $F$.
\end{thm}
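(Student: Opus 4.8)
The plan is to prove Michael's theorem by the classical route: construct a uniformly convergent sequence of continuous \emph{approximate} selections and pass to the limit. The statement as worded omits the lower semicontinuity hypothesis that Michael's theorem requires; I read it as implicit (and in the paper's application it is supplied by Theorem~\ref{thm:lower_semi_sol_zeta}), so throughout I assume $F$ is lower semicontinuous with nonempty, closed, convex images. The essential ingredients are lower semicontinuity (to make certain sublevel-type sets open), convexity of the images (to keep convex combinations close to $F(y)$), paracompactness of the compact metric space $Y$ (to obtain continuous partitions of unity), and completeness of the Banach space $Z$ (to take a uniform limit).

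First I would establish the \textbf{approximate selection lemma}: for every $\varepsilon>0$ there is a continuous $f:Y\to Z$ with $d(f(y),F(y))<\varepsilon$ for all $y$. For each $z$ in the range set $U_z=\{y\in Y:\ F(y)\cap B(z,\varepsilon)\neq\varnothing\}$; lower semicontinuity of $F$ makes each $U_z$ open, and since $F(y)\neq\varnothing$ the family $\{U_z\}$ covers $Y$. Compactness (hence paracompactness) of $Y$ yields a locally finite subcover $\{U_{z_i}\}$ and a subordinate continuous partition of unity $\{p_i\}$. Define $f(y)=\sum_i p_i(y)\,z_i$. For fixed $y$, every index $i$ with $p_i(y)>0$ has $z_i$ within $\varepsilon$ of the convex set $F(y)$, i.e. $z_i\in F(y)+\varepsilon B$ with $B$ the unit ball; since $F(y)+\varepsilon B$ is convex, the convex combination $f(y)$ lies in it too, giving $d(f(y),F(y))\le\varepsilon$.

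Next I would run the \textbf{Cauchy iteration}. Fix a summable tolerance schedule, say $\varepsilon_n=2^{-n}$, and build $f_1$ by the lemma for $\varepsilon_1$. Inductively, given a continuous $f_n$ with $d(f_n(y),F(y))<\varepsilon_n$, form the refined map $F_n(y)=F(y)\cap B(f_n(y),\varepsilon_n)$, which is nonempty, convex, and again lower semicontinuous; apply the lemma to $F_n$ with tolerance $\varepsilon_{n+1}$ to obtain $f_{n+1}$ with $d(f_{n+1}(y),F_n(y))<\varepsilon_{n+1}$. This forces both $d(f_{n+1}(y),F(y))<\varepsilon_{n+1}$ and $\norm{f_{n+1}(y)-f_n(y)}<\varepsilon_n+\varepsilon_{n+1}$. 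The latter bound, summable in $n$, shows $\{f_n\}$ is uniformly Cauchy, so by completeness of $Z$ it converges uniformly to a continuous $\sigma$. Since $d(\sigma(y),F(y))\le d(f_n(y),F(y))+\norm{\sigma(y)-f_n(y)}\to 0$ and $F(y)$ is closed, $\sigma(y)\in F(y)$, and $\sigma$ is the desired continuous selection.

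The hard part will be the geometric stability step: verifying that the refined map $F_n(y)=F(y)\cap B(f_n(y),\varepsilon_n)$ remains lower semicontinuous with nonempty values, so the lemma can be reapplied. Nonemptiness is immediate from $d(f_n(y),F(y))<\varepsilon_n$, but lower semicontinuity of the intersection of the lsc map $F$ with the continuous open-ball-valued map $y\mapsto B(f_n(y),\varepsilon_n)$ rests on the standard intersection lemma for lsc maps, whose hypotheses (openness of the balls and nonempty intersection) are exactly what we have arranged. Establishing that intersection lemma once, together with the routine check that convexity survives intersection with a ball, is the crux on which the whole induction turns.
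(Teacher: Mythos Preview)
The paper does not prove this theorem: it is stated as a citation (Thm.~9.1.2 of \cite{aubin2009set}) and then immediately applied to establish Theorem~\ref{thm:lie_continuous}. There is therefore no proof in the paper to compare your proposal against.

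That said, your outline is the standard partition-of-unity construction that underlies Michael's theorem, and it is correct in structure. You are right that the statement as recorded in the paper silently omits the lower semicontinuity hypothesis, which is indispensable and is indeed supplied in the paper's application by Theorem~\ref{thm:lower_semi_sol_zeta}. The one place worth tightening, which you already flag, is the inductive step: the refined map $F_n(y)=F(y)\cap B(f_n(y),\varepsilon_n)$ has open (hence not closed) convex values, so if you want to reapply your approximate-selection lemma verbatim you should either state that lemma for lsc maps with merely nonempty convex values, or take closures and enlarge the radius slightly at each stage. Either fix is routine, and with it your argument goes through.
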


Note that Michael's Theorem does not require that the images of $F$ in $Z$ should be compact.
Michael's theorem simply requires closed convex images.

\begin{prop}\label{prop:tau_pos}
Under assumptions A1'-A5' and with lower-semicontinuity of $S_\rho$, the following quantity $\tau$ is positive:
\begin{align}
    \tau = & \inf_{y, w, \beta} b_0(y) +  b_\bullet(y) w - \beta^T(a_0(y) +  a_\bullet(y))  w \\
    & y \in Y, \ w \in W(y), \ \beta \in S^\beta_0(y) \label{eq:tau_pos}
\end{align}
\end{prop}

\begin{rmk}
    Removing the compactness requirement A2' could cause $\tau$ to be zero.
\end{rmk}


We now prove Theorem \ref{thm:multipliers_cont_poly}:
\begin{proof}
     Given a lower bound $\tau > 0$ from \eqref{eq:tau_pos}, all mappings $\{S_{\rho}\}_{\rho \in [0, \tau]}$ are equal to each other. It therefore holds that both $S_\tau$ and $S_0$ are closed. $S_0$ satisfies all requirements of Michael's Theorem \ref{thm:michaels} and therefore has a continuous selection for the Lie multipliers.  
The below minimal map is one such continuous selection  (Prop. 9.3.2 in \cite{aubin2009set}):
    \begin{equation}
        \label{eq:min_norm_map}
   m(S_0(y)) \doteq \left \{ (\zeta, \beta) \in S_0(y) \colon
\norm{\zeta} + \norm{\beta} = \min_{(\zeta, \beta)  \in  S_0(y)} \norm{\zeta} + \norm{\beta}  \right \}.
 \end{equation}
\end{proof}

\section{Polynomial Approximation of Multipliers}
\label{app:poly_robust_mult}

Let $(\zeta^c(y), \beta^c(y))$ be a continuous selection of multipliers of $S_0(y)$ \eqref{eq:sol_zeta_0} (guaranteed to exist by Michael's Theorem \ref{thm:michaels}). This section will prove that there exists a polynomial choice $(\zeta^p(y), \beta^p(y))$  that is also a continuous selection for $S_0$.

\subsection{Choice of Free Variable}

We begin by performing a Stone-Weierstrass approximation of $\beta^c(y)$ by a polynomial $\beta^p(y)$:
\begin{equation}
    \sup_{y \in Y} \ \norm{\beta^c(y)-\beta^p(y)}_\infty \leq \epsilon_\beta.
\end{equation}

Letting $r_\beta$ be the residual $\beta^c = \beta^p + r_\beta, \ \norm{r_\beta}_{C^0(Y)} \leq \epsilon_\beta$, we can express the robust counterpart expression \eqref{eq:lin_robust_strict_param} as
\begin{subequations}
    \begin{align}
        &e^T \zeta + a_0^T(\beta^p + r_\beta)  < b_0  \label{eq:robust_betap}\\ 
        &G^T \zeta = 0 \\ 
        &A^T \zeta = b_\bullet - a_\bullet^T (\beta^p + r_\beta)
    \end{align}
\end{subequations} 

Given that $a_0(y)^T r_\beta \leq \epsilon_\beta \norm{a_0(y)}_1$ and letting $a_0^* = \sup_{y \in Y} \  \norm{a_0(y)}_1$, a choice of $\epsilon_\beta <  \tau / (4 a_0^*)$ will preserve the strict inequality for \eqref{eq:robust_betap} to produce 
\begin{equation}
        e^T \zeta + a_0^T\beta^p +  \epsilon_\beta a_0^* < b_0  \label{eq:robust_betap_2}\\ 
\end{equation}
\subsection{Continuous Parameterization}


Let us define the matrix $\Phi$ as
\begin{equation}
    \Phi = [G; A]^T \\
\end{equation}
By assumption in Theorem \ref{thm:multipliers_cont_poly}, the matrix $\Phi$ is constant in $y$. 
Define $H$ as a constant matrix whose columns span the nullspace of $\Phi$, in which $N$ is the nullity of $\Phi$. The following least-squares solutions can be taken (ignoring the conic constraint $\zeta \in K^*$):
\begin{subequations}
\begin{align}
    \theta(y) &= \Phi^+ [\mathbf{0}; b_\bullet(y) - a_\bullet^T(y) \beta^p(y)] \\
    \phi(y) &= \Phi^+ [\mathbf{0};  - a_\bullet^T(y) r_\beta(y)].
\end{align}
\end{subequations}

The vectors $\theta(y)$ and $\phi(y)$  are continuous functions of $y$ given that $a_\bullet$ and $b_\bullet$ are continuous (A3'), $\Phi$ is constant,  $\beta^p$ is polynomial, and $r_\beta$ is continuous 

The multipler $\zeta^c(y)$ can be presented using a continuous function $\psi(y): Y \rightarrow  \R^N$ as 
\begin{equation}
    \zeta^c(y) = \theta(y) + H \psi(y) + \phi(y), \label{eq:zeta_decompose}
\end{equation}

We will partition $(\theta, H, \phi)$ according to the resident cones $K_s$ by
\begin{align}
    \zeta_s &= \theta_s + H_s \psi + \phi_s& & \forall s=1..N_s.
\end{align}


\subsection{Polynomial Approximation}
We will use the Stone-Weierstrass theorem to approximate the functions $\psi_c: Y \rightarrow \R^N$, by a polynomial vector $\psi_p \in \R[y]^N$ in the compact space $Y$ up to a tolerance $\epsilon_\psi>0$ with
\begin{align}
\label{eq:robust_sw}
    \sup_{y \in Y} &\ \norm{\psi_c(y) - \psi_p(y)}_\infty \leq \epsilon_\psi.
\end{align}
A similar approximation will take place for $\phi$ with 
\begin{align}
\label{eq:robust_sw_phi}
    \sup_{y \in Y} &\ \norm{\phi_c(y) - \phi_p(y)}_\infty \leq \epsilon_\phi.
\end{align}
In order to pose a valid approximation $(\zeta^p, \beta^p)$, we need to use a notion of centers of cones. We will choose the incenter:

\begin{defn}[Def. 2.1 of \cite{henrion2010properties}] 
Let $(X, \norm{\cdot})$ be a reflexive Banach space with distance $\text{dist}(x_1, x_2) = \norm{x_1-x_2}$, and let $S_X$ be the unit sphere in $X$.
Given a cone $K \subset X$, let $K \cap S_X$ be the set of unit-norm elements of the cone $K$. The \textbf{incenter} of $K$ is the unique solution to
\begin{align}
    \varsigma(K) = \sup_{x \in K \cap S_X} \textrm{dist}(x; \partial K).
\end{align}
\end{defn}

\begin{rmk}
The following equation lists common cones and their incenters \cite{henrion2010inradius}:
\begin{subequations}
\label{eq:incenters}
    \begin{align}
        \R_{\geq 0}: \ & 1 & Q^n: \ & (\0_n; 1)         & \psd_+^n: \ & I_n / \sqrt{n}.  
    \end{align}
    \end{subequations}
\end{rmk}

For a given cone $K_s$, we define $c_s$ as the incenter of $K_s$. In the semidefinite case, the incenter will be appropriately vectorized following the vectorial convention of cone containment $K_s$.

Our approximation $(\zeta^p, \beta^p)$ will be defined using tolerances $\delta_s > 0$ for $s=1..N_s$:
\begin{align}
\label{eq:robust_mult_poly}
    \zeta^p &= \theta_s + H_s \psi^p + \phi^c_s + \delta_s c_s & & \forall s=1..N_s 
\end{align}

The tolerance terms $\delta_s c_s$ will encourage conic containment in $K^*$. 

The approximator $\zeta^p$ is related to $\zeta^c$ by
    \begin{align}
        \zeta^p_s &= \zeta^c_s + \delta_s c_s + H_s(\psi^p - \psi^c) + (\phi^p - \phi^c) \label{eq:zeta_p_rel}.
    \end{align}

The term in \eqref{eq:zeta_p_rel} dominates the worst-case bound
\begin{align}
    \zeta^c_s + c_s\delta_s + H_s(\psi^p - \psi^c) + (\phi^p - \phi^c)\geq_{K^*_s} \zeta^c_s + c^*_s(\delta_s - \norm{H_s}_\infty \epsilon_\psi - n_s \epsilon_\phi)
    \label{eq:zeta_p_bound}
\end{align}
using the Stone-Weierstrass approximation \eqref{eq:robust_sw}.

A sufficient condition for $\zeta^p_s \in K^*_s$ through \eqref{eq:zeta_p_bound} is that 
\begin{equation}
\delta_s \geq \norm{H_s}_\infty \epsilon_\psi + n_s \epsilon_\phi. 
   \label{eq:robust_cond_sw}    
\end{equation}

We now move to the strict inequality constraint in \eqref{eq:robust_betap}
\begin{equation}
\label{eq:sol_zeta_strict_ineq}
    e^T \zeta + a_0^T\beta^p +  \epsilon_\beta a_0^* < b_0 .
\end{equation}

Substitution of $\zeta^p$ from \eqref{eq:robust_mult_poly} into \eqref{eq:sol_zeta_strict_ineq} leads to
\begin{align}\label{eq:sol_zeta_strict_ineq_subs} 
   \textstyle \sum_{s=1}^{N_s} e^T_s (\zeta^c_s + c_s \delta_s + H_s(\psi^p - \psi^c) + (\phi^p - \phi^c)) + a_0^T(\beta^p) + \epsilon_\beta a_0^* < b_0.
\end{align}

Recalling that each cone $K_s$ is a subset of the finite-dimensional $\R^{n_s}$, the left-hand term of \eqref{eq:sol_zeta_strict_ineq_subs} is upper-bounded using \eqref{eq:robust_sw} and \eqref{eq:robust_sw_phi} by
\begin{subequations}
    \label{eq:sol_zeta_strict_ineq_subs_bound} 
\begin{align}
    &\textstyle\sum_{s=1}^{N_s} (e^T_s \zeta^c_s + e^T_s c_s \delta_s + \norm{\diag{e_s} H_s}_\infty \epsilon_\psi n_s + \epsilon_\phi n_s) + a_0^T(\beta^p) + \epsilon_\beta a_0^*\\
    =&\textstyle\left[ \sum_{s=1}^{N_s}e^T_s \zeta^c_s + a_0^T\beta^p \right] + \left[\sum_{s=1}^{N_s}(\delta_s e^T_sc_s  + \norm{\diag{e_s} H_s}_\infty n_s \epsilon_\psi + \epsilon_\phi n_s) +  \epsilon_\beta a_0^* \right].
\end{align}
\end{subequations}

Define $Q^*$ as the finite and positive value
\begin{align}
\label{lem:max_value_q}
    Q^* = \min_{y \in Y} b_0 - \left[ \textstyle \sum_{s=1}^{N_s}e^T_s \zeta^c_s + a_0^T\beta^p \right] > 0.
\end{align}
The minimum of \eqref{lem:max_value_q} is attained because all functions $(b_0, e, a_0, \zeta, \beta)$ are continuous in the compact region $Y$. 

Successful polynomial-based approximation with $(\zeta^p(y), \beta^p(y)) \in S_0(y)$ will occur if $(\{\delta_s\}, \epsilon)$ are chosen with
\begin{subequations}
\label{eq:poly_suff_both}
\begin{align}
& \epsilon_\beta, \ \epsilon_\psi, \ \epsilon_\phi > 0 \\
& \epsilon_\beta <  \tau / (4 a_0^*) \\
    &\forall s = 1..N_s: \qquad \delta_s\geq \norm{H_s}_\infty \epsilon_\psi + n_s \epsilon_\phi \\
    &\textstyle\sum_{s=1}^{N_s}(\delta_s e^T_s c_s + \norm{\diag{e_s} H_s}_\infty n_s \epsilon_\psi + n_s \epsilon_\phi ) \label{eq:poly_suff_both_delta} +  \epsilon_\beta a_0^* \epsilon <  Q^*.
\end{align}
\end{subequations}

Shrinking the tolerances $(\{\delta_s\}, \epsilon_\beta, \epsilon_\phi, \epsilon_\psi)$ towards zero will result in approximations of increasing quality. 
This approximation quality is directly relevant towards establishing convergent bounds in Lie problems, such as in the suboptimal peak estimation task discussed in Appendix \ref{app:poly_robust_aux}.

\section{Robust Duality and Recovery}
\label{app:robust_duality}

This appendix dualizes programs formed by robust Lie constraints \eqref{eq:lie_robust_counterpart} and forms an interpretation based on occupation measures. It also reviews a technique from \cite{majumdar2014convex, korda2015controller} to extract approximate polynomial control laws from moment-\ac{SOS} solutions.

\subsection{Measure Theory}

The dual \acp{LP} of the function programs described in this paper are \acp{LP} in measures. We briefly review concepts in measure theory in this subsection. Refer to \cite{tao2011introduction} for a complete reference, and to \cite{ pivato2003analysis} for a visual introduction to measure theory.

\subsubsection{Nonnegative Measures}
Let $S$ be a Banach space, and let $\textrm{Set}(S) = 2^S$ be the set of sets (power set) of a space $S \in \R^n$. A $\sigma$-algebra $\Sigma$ over $S$ is a subset of $\textrm{Set}(S)$ such that $\Sigma$ contains $S$ and is closed under countable unions and complements. Examples of $\sigma$-algebras over the real line $\R$ include the countable unions of the set of intervals $[a, b]$ with $a,b \in \R$ and when $a, b \in \Z$.


A nonnegative Borel measure $\mu: \Sigma \rightarrow \R_+$ is a function that assigns a size (measure) to each set $A \subset S$ in a $\sigma$-algebra $\Sigma$ under the following rules:
    \begin{enumerate}
        \item $\mu(A) \geq 0 \qquad \forall A \in \Sigma$
        \item $\mu(\varnothing) = 0$ \label{enum:meas_2}
        \item $\mu(\bigcup_{k=1}^\infty A_k) = \sum_{k=1}^\infty \mu(A_k)\qquad $ sets $A_k$ are disjoint. \label{enum:meas_3}
    \end{enumerate}
The set of all nonnegative Borel measures over $S$ is $\Mp{S}$. The mass of a nonnegative measure $\mu \in \Mp{S}$ is $\mu(S)$, and $\mu$ is a probability measure if $\mu(S)=1$.  The Dirac Delta $\delta_{s'}$ at a base point $s' \in S$ is the unique probability measure satisfying $\mu(A) = 1$ whenever $s' \in A$ and $\mu(A) = 0$ otherwise. The support of a measure $\mu$ is the locus of all points $s$ where every open neighborhood $Nb \ni s$ satisfies $\mu(Nb)>0$.

\subsubsection{Absolute Continuity and Domination}
\label{sec:prelim_abscont_dom}

Let $\mu, \nu \in \Mp{S}$ be nonnegative Borel measures. The measure $\nu$ is absolutely continuous to $\mu$ ($\nu \ll \mu$) if, for every $A \in \Sigma$, $\mu(A)=0$ implies that $\nu(A) = 0$. Equivalently, there exists a unique  nonnegative and measurable (density) function $\rho(s)$ such that $\inp{f(s)}{\mu(s)} = \inp{f(s) \rho(s)}{\nu(s)}$ for all $f \in C(S)$. The function $\rho$ is the Radon-Nikod\'ym derivative $\rho = d \nu / d \mu$.

The measure $\nu$ is dominated by $\mu$ if $\nu(A) \leq \mu(A)$ for all subsets $A \subset S$ (elements $A$ in a $\sigma$-algebra).
There exists a unique nonnegative slack measure $\hat{\nu} \in \Mp{S}$ such that $\nu(A) + \hat{\nu}(A) = \mu(A), \ \forall A \subset S$, which may be equivalently written as $\nu + \hat{\nu} = \mu$. Domination ($\nu \leq \mu$) is a stronger condition than absolute continuity ($\nu \ll \mu$).

A pair of measures $\omega, \nu \in \Mp{S}$ are orthogonal ($\omega \perp \nu$) if $\forall A \in \supp{\omega}: \nu(A) =0$ and $\forall B \in \supp{\nu}: \mu(B) =0$. While the sum $\nu + \omega$ dominates $\nu$ and $\omega$ individually, it does not hold that every $\mu$ with $\mu \geq \nu$ and $\mu = \nu + \hat{\nu}$ produces an orthogonal pair $\nu \perp \hat{\nu}$. 


\subsubsection{Signed Measures}
A signed measure is a function $\mu: \Sigma \rightarrow \R$ that only satisfies conditions \ref{enum:meas_2} and \ref{enum:meas_3} of a nonnegative measure (sets may have negative measure). The set of signed measures over a space $S$ is $\mathcal{M}(S)$. 
The Hahn-Jordan decomposition is a unique method to split a signed measure $\mu \in \mathcal{M}$ into the difference of two orthogonal nonnegative measures $\mu = \mu^+ - \mu^-$. 

The \ac{TV} norm of a signed measure $\mu \in \mathcal{M}(S)$ is
\begin{subequations}
\label{eq:tv_norm}
\begin{align}
    \norm{\mu}_{TV} &= \sup_{v \in C(S)} \inp{v}{\mu}: \ -1 \leq v(s) \leq 1 \ \forall s \in S \\
    &= \inf_{\mu^+, \ \mu^- \in \Mp{S}} \inp{1}{\mu^+} + \inp{1}{\mu^-}: \ \mu^+ - \mu^- = \mu.
\end{align} 
\end{subequations}

\subsubsection{Pairings and Moments}
The sets $\mathcal{M}(S)$ and $C(S)$ are topological duals when $S$ is compact under the  pairing $\inp{f}{\mu} = \int_S f d\mu \ \forall f \in C(S), \ \mu \in \mathcal{M}(S)$. This pairing forms a  duality pairing when restricted to the nonnegative subcones $\Mp{S}$ and $C_+(S)$, in which $\inp{f}{\mu} \geq 0 \ \forall f \in C(S), \mu \in \Mp{S}$. The pairing between a $f \in C(S)$ and the Dirac Delta $\delta_{s'}$ is $\inp{f}{\delta_{s'}} = f(s')$. Every linear operator $\mathcal{P}: S \rightarrow Y$ possesses an adjoint $\mathcal{P}^\dagger$ such that $\forall f \in C(Y), \ \mu \in \mathcal{S},  \ \inp{f(\mathcal{P}(s))}{\mu(y)} = \inp{f(y)}{(\mathcal{P}^\dagger \mu)(s)}$.

Let $S$ be an $n$-dimensional space and let $\alpha \in \N^n$ be a multi-index. The $\alpha$-moment of a measure $\mu \in \Mp{S}$ is $\bbm_\alpha = \inp{s^\alpha}{\mu}$. The mass is the $0$-moment $\bbm_0 = \inp{1}{\mu}$. A moment sequence is an infinite collection of moments $\{\bbm_\alpha\}_{\alpha \in \N^n}$. The moment matrix of a sequence $\bbm$ is the infinite-dimensional matrix indexed by multi-indices 
$\alpha, \beta \in \N^n$ with $(\M[\bbm])_{\alpha+\beta} = \bbm_{\alpha+\beta}$. The degree-$d$ truncation $\M_d[\bbm]$ is the $\binom{n+d}{d}$-size symmetric matrix involving moments up to degree $2d$. A sufficient conditions for a sequence of numbers (pseudo-moments) $\{\tilde{\bbm}\}_{\alpha \in \N^n, \ \abs{\alpha} \leq 2d}$ to have at least one measure $\mu \in \Mp{\R^n}$ satisfying $\inp{s^\alpha}{\mu} = \tilde{\bbm}_\alpha$ is that $\M_d[\bbm]$ is \ac{PSD} \cite{lasserre2009moments}. This possibly non-unique $\mu$ is called a representing measure of $\tilde{\bbm}.$

Let $\K = \{s \mid \forall j=1..N_c: g_j(s) \geq 0\}$ be a \ac{BSA} set with scalar polynomial constraints $g_j(s) \geq 0$.
The localizing matrix associated with a polynomial $g(s) = \sum_{\gamma \in \N^n} g_\gamma s^\gamma$ and a moment sequence $\bbm$ is $(\M[g \bbm])_{\alpha \beta} = \sum_{\gamma \in \N^n} g_\gamma \bbm_{\alpha+\beta+\gamma}$. If $\K$ is Archimedean, a necessary and sufficient condition for the pseudo-moments $\tilde{\bbm}$ to have a representing measure supported on $\K$ is that $\M_d[\tilde{\bbm}]$ and all $\M_{d - \floor{\deg g_j/2}}[\tilde{\bbm}]$ are \ac{PSD} for every $j=1..N_c$.


\subsubsection{Occupation Measures}

Let $[0, T]$ be a time range and let $s: [0, T] \rightarrow S$ be a curve on $S$. The \textit{occupation measure} of $t \mapsto (t, s(t))$ in the time range $[0, T]$ is the unique measure $\mu$ such that $\forall v \in C([0, T] \times S): \ \inp{v}{\mu} = \int_{0}^T v(t, s(t)) dt$.  

The dual of the peak estimation problem \eqref{eq:poly_peak_w} involving an initial measure $\mu_0$, a peak measure $\mu_p$, and a (relaxed) occupation measure $\mu$ is \cite{miller2021uncertain, lewis1980relaxation}
\begin{subequations}
\label{eq:peak_meas}
    \begin{align}
        p^* =&\quad  \sup \ \inp{p(x)}{\mu_p} \label{eq:peak_meas_obj}\\
        & \inp{v(t, x)}{\mu_p(t, x)} = \inp{v(0, x)}{\mu_0(x)} + \inp{\Lie_f v(t, x, w)}{\mu(t, x, w)} \label{eq:peak_meas_liou}\\        
        & \inp{1}{\mu_0} = 1\label{eq:eq_meas_prob} \\
        & \mu_0 \in \Mp{X_0}, \ \mu_p \in \Mp{[0, T] \times X} \\
        & \mu \in \Mp{[0, T] \times X \times W}.\label{eq:eq_meas_occ}
    \end{align}
\end{subequations}
Constraint \eqref{eq:peak_meas_liou} is a Liouville equation that ensures $\mu_0$ and $\mu_p$ are connected together by the dynamical system $f$, whose trajectories are represented by $\mu$. Constraint \eqref{eq:eq_meas_prob} ensures that the initial measure $\mu_0$ is a probability measure. Letting $t^*$ be a stopping time, $x_0 \in X_0$ be an initial condition, and $w(\cdot)$ be an admissible input process (constrained to lie in $W$ with no assumption of continuity), the measures $\mu_0 = \delta_{x = x_0}, \ \mu_p = \delta_{t = t^*, \ x=(t^* \mid x_0, w(\cdot))},$ and $\mu$ as the occupation measure $t \mapsto (t, x=(t^* \mid x_0, w(\cdot)), w(t))$ in times $[0, t^*]$ are feasible solutions to the constraints of \eqref{eq:peak_meas}.
Under assumptions A1-A6, program \eqref{eq:peak_meas} achieves strong duality with \eqref{eq:poly_peak_w}. In this instance, $\mu$ from \eqref{eq:eq_meas_occ} is a controlled (Young) measure \cite{young1942generalized}.

\subsection{Duality}

To simplify explanations, we will consider a polytope-constrained peak estimation problem from \eqref{eq:poly_peak_w_robust} with $G=0$ and an uncertainty set $W = \{w \in \R^L \mid e - A w \geq 0\}$. The Lie-robustified peak estimation \ac{LP} under polytopic uncertainty considered in this appendix is
\begin{subequations}
\label{eq:poly_peak_w_robust_sp}
\begin{align}
    d^* = & \ \inf_{v(t,x), \gamma, \zeta} \quad \gamma & \\
    & \gamma \geq v(0, x)  & &  \forall x \in X_0 \\
& \Lie_{f_0} v(t, x) + e^T \zeta(t, x) \leq 0 & & \forall (t, x) \in [0, T] \times X  \label{eq:poly_peak_w_robust_sp_lie}\\
    &  -(A^T)_{\ell}\zeta(t,x) + f_\ell \cdot \nabla_x v(t,x) =0 & & \forall \ell=1..L \label{eq:poly_peak_w_robust_sp_dual} \\ 
    & v(t, x) \geq p(x) & & \forall (t, x) \in [0, T] \times X  \\
    & v(t, x)\in C^1([0, T]\times X) \\
    & \zeta_j(t, x) \in C_+([0, T] \times X) & & \forall j = 1..m.
\end{align}
\end{subequations}

We will derive a dual to \eqref{eq:poly_peak_w_robust_sp}.
Define the following measures as multipliers to constraints in \eqref{eq:poly_peak_w_robust_sp}:
\begin{subequations}
\label{eq:robust_measures}
    \begin{align}
        &\textrm{Initial}  & \mu_0 &\in \Mp{X_0} \\
        &\textrm{Occupation} & \mu &\in \Mp{[0, T] \times X} \\
        &\textrm{Peak}  & \mu_p &\in \Mp{[0, T] \times X} \\
        &\textrm{Controlled}  & \nu &\in \mathcal{M}([0, T] \times X)  & & \forall \ell=1..L \\
        &\textrm{Constraint-Slack}  & \hat{\mu}_j &\in \Mp{[0, T] \times X}  & & \forall j=1..m.
    \end{align}
\end{subequations}

The Lagrangian $\scL$ associated with \eqref{eq:poly_peak_w_robust_sp} is
\begin{align}
    \scL =& \gamma + \inp{v(0, x)-\gamma}{\mu_0} + \inp{\Lie_{f_0} v(t, x) + e^T \zeta(t,x)}{\mu} + \inp{-v(t, x) + p(x)}{\mu_p} \\
    &+ \textstyle \sum_{j=1}^m \inp{-\zeta_j}{\hat{\mu}_j} + \sum_{\ell=1}^L \inp{f_\ell \cdot \nabla_x v(t, x) - (A^T)_\ell \zeta(t, x)}{\nu_\ell}\nonumber \\
    =& \textstyle \gamma(1 - \inp{1}{\mu_0}) + \inp{v(t, x)}{\delta_0 \otimes \mu_0 \Lie_{f_0}^\dagger\mu + \sum_{\ell=1}^L (f_\ell \cdot \nabla_x)^\dagger \nu_\ell - \mu_p} \\
    &+\textstyle \sum_{j=1}^m \inp{\zeta_j(t, x)}{ e_j \mu-(\sum_{\ell=1}^L A_{j\ell}\nu_j) - \hat{\mu}_j } + \inp{p(x)}{\mu_p}.
\end{align}

The dual measure \ac{LP} of \eqref{eq:poly_peak_w_robust_sp} is
\begin{subequations}
\label{eq:robust_meas}
    \begin{align}
        p^* =& \quad \sup_{\textrm{\eqref{eq:robust_measures}}} \  \inf_{\gamma, v, \zeta} \scL \\
        =&\quad  \sup \ \inp{p(x)}{\mu_p} \label{eq:robust_meas_obj}\\
        & \mu_p = \delta_0 \otimes \mu_0 + \Lie_{f_0}^\dagger\mu + \textstyle \sum_{\ell=1}^L (f_\ell \cdot \nabla_x)^\dagger \nu_\ell \label{eq:robust_meas_liou}\\
        &  e_j \mu= (\textstyle \sum_{\ell=1}^L A_{j\ell}\nu_j) + \hat{\mu}_j  & & \forall j = 1..m \label{eq:robust_meas_dom}\\
        & \inp{1}{\mu_0} = 1\label{eq:robust_meas_prob} \\
        & \textrm{Measures from \eqref{eq:robust_measures}}.           
    \end{align}
\end{subequations}

\begin{rmk}
Program \eqref{eq:robust_meas} should be compared against the standard peak estimation program \eqref{eq:peak_meas}. Constraint \eqref{eq:robust_meas_liou} is a robustified Liouville equation. Constraint \eqref{eq:robust_meas_dom} is a sequence of domination conditions, as detailed in Section \ref{sec:prelim_abscont_dom}. Constraint \eqref{eq:robust_meas_prob} enforces that $\mu_0$ is a probability measure.
\end{rmk}

\begin{lem}
\label{lem:robust_meas_upper}
    The measure program \ref{eq:robust_meas} upper-bounds on \eqref{eq:peak_traj_poly} with $p^* \geq P^*$.
\end{lem}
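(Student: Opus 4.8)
The plan is to prove $p^* \ge P^*$ by a direct ``trajectory-to-measure'' argument: I would exhibit, for every admissible trajectory of \eqref{eq:peak_traj_poly}, a feasible point of the dual \ac{LP} \eqref{eq:robust_meas} whose objective equals the peak value realized along that trajectory, and then take a supremum over trajectories. I would deliberately \emph{not} route this through weak duality with \eqref{eq:poly_peak_w_robust_sp}, since the Lagrangian computation in this appendix only delivers $p^* \le d^*$, which is the opposite inequality; the bound we want comes from primal feasibility of the occupation-measure construction.

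First I would fix a stopping time $t^* \in [0, T]$, an initial condition $x_0 \in X_0$, and an admissible (measurable, $W$-valued) input process $w(\cdot)$, writing $x(t) = x(t \mid x_0, w(\cdot))$; by the non-return assumption A5 it is enough to treat trajectories with $x(t) \in X$ for all $t \in [0, t^*]$, and the supremum of $p(x(t^*))$ over such triples is exactly $P^*$. Next I would set $\mu_0 = \delta_{x_0}$, $\mu_p = \delta_{(t^*, x(t^*))}$, define the relaxed occupation measure $\mu$ by $\inp{g}{\mu} = \int_0^{t^*} g(t, x(t))\, dt$ and the controlled measures $\nu_\ell$ by $\inp{g}{\nu_\ell} = \int_0^{t^*} w_\ell(t)\, g(t, x(t))\, dt$ for $g \in C([0,T] \times X)$, and take the slack measures $\hat{\mu}_j := e_j \mu - \textstyle\sum_{\ell = 1}^L A_{j\ell}\, \nu_\ell$. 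Then I would verify feasibility of \eqref{eq:robust_meas} clause by clause: $\inp{1}{\mu_0} = 1$ gives \eqref{eq:robust_meas_prob}; the domination identities \eqref{eq:robust_meas_dom} hold by the definition of $\hat{\mu}_j$; each $\hat{\mu}_j$ is nonnegative since for $g \ge 0$ one has $\inp{g}{\hat{\mu}_j} = \int_0^{t^*} g(t, x(t))\,\big(e_j - \textstyle\sum_\ell A_{j\ell} w_\ell(t)\big)\, dt \ge 0$ using $w(t) \in W = \{w : e - Aw \ge 0\}$; and the robustified Liouville equation \eqref{eq:robust_meas_liou} follows by pairing both sides against an arbitrary $v \in C^1([0,T] \times X)$ and applying the fundamental theorem of calculus to $t \mapsto v(t, x(t))$ together with $\dot{x}(t) = f_0(t,x(t)) + \sum_\ell w_\ell(t) f_\ell(t,x(t))$, which collapses the right-hand side of \eqref{eq:robust_meas_liou} to $v(t^*, x(t^*)) = \inp{v}{\mu_p}$. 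Since the objective at this feasible point is $\inp{p(x)}{\mu_p} = p(x(t^*))$, this yields $p^* \ge p(x(t^*))$, and a supremum over $(t^*, x_0, w(\cdot))$ finishes the argument.

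I expect the delicate part to be the measure-theoretic bookkeeping rather than any inequality: one must confirm that $\mu$ and the $\nu_\ell$ are bona fide finite (signed) Borel measures supported on the compact set $[0,T] \times X$ when $w(\cdot)$ is merely measurable (boundedness of $w$ over the compact $W$, plus A5, supply this), and, conceptually, that the $w$-dependence that sits explicitly inside the occupation measure of the \emph{undecomposed} program \eqref{eq:peak_meas} can be faithfully split off into the signed measures $\nu_\ell$ and the nonnegative slacks $\hat{\mu}_j$ without breaking either the Liouville balance \eqref{eq:robust_meas_liou} or the membership $\hat{\mu}_j \in \Mp{[0,T] \times X}$. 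Checking that this split respects \emph{both} constraints at once — which is precisely where the polytope description $W = \{w : e - Aw \ge 0\}$ enters — is the crux; the remaining steps are routine.
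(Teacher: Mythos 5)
Your proposal is correct and follows essentially the same route as the paper: the paper also proves $p^* \geq P^*$ by constructing, from each admissible triple $(t^*, x_0, w(\cdot))$, the Diracs $\mu_0 = \delta_{x_0}$, $\mu_p = \delta_{(t^*, x(t^*))}$ together with the $w_\ell(t)$-weighted and $(e_j - A_j w(t))$-weighted occupation measures for $\nu_\ell$ and $\hat{\mu}_j$, and concludes by feasibility of this representation. Your write-up is simply more explicit about verifying the Liouville balance and the nonnegativity of the slacks $\hat{\mu}_j$ via $e - Aw \geq 0$, which the paper leaves implicit.
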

\begin{proof}
    Let $t^* \in (0, T]$ be a stopping time of a trajectory of \eqref{eq:disturbance_affine} with applied control input $w(t)$ starting from an initial condition $x_0 \in X_0$. Measures from \eqref{eq:robust_meas} may be constructed from the trajectory $x(t \mid x_0, w(\cdot))$. 

The probability measures are $\mu_0= \delta_{x=x_0}$ and $
    \mu_p = \delta_{t=t^*, x=x(t^* \mid x_0, w(\cdot))}$. Relaxed occupation measures may be chosen as the occupation measures of the following evaluation maps in the times $t \in [0, t^*]$:
    \begin{subequations}
\begin{align}
    \mu: \quad & t \mapsto (t, x(t \mid x_0, w(\cdot)) \\
    \nu_\ell:\quad  & t \mapsto (t, w_\ell(t) x(t \mid x_0, w(\cdot)) & & \forall \ell=1..L \\
    \hat{\mu}_j:\quad  & t \mapsto (t, (e_j - A_j w(t)) x(t \mid x_0, w(\cdot)) & & \forall j=1..m.
\end{align}
    \end{subequations}
Every trajectory $x(t \mid x_0, w(\cdot))$ has a feasible measure representation, proving the upper-bounding theorem.
\end{proof}

\begin{thm}
\label{thm:robust_upper_bound}
Strong duality holds between  \eqref{eq:robust_meas} with \eqref{eq:poly_peak_w_robust_sp} $d^*=p^* =P^*$ under assumptions A1-A6.
\end{thm}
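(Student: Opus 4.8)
The plan is to obtain $d^*=p^*=P^*$ by a sandwiching argument rather than by proving strong duality directly. First I would collect the three inequalities that together pin down the common value. Lemma~\ref{lem:robust_meas_upper} already supplies $p^*\ge P^*$. Next, the feasibility equivalence of the robust counterpart (Lemma~\ref{lem:lie_robust}, specialized to the polytope $W=\{w: e-Aw\ge 0\}$) shows that, after projecting out $\zeta$, the $(v,\gamma)$-feasible set of \eqref{eq:poly_peak_w_robust_sp} coincides with that of \eqref{eq:poly_peak_w}; since the objective $\gamma$ is unchanged, the two programs share the same optimal value. Combined with the equality $P^*=d^*$ between the trajectory program \eqref{eq:peak_traj_poly} and \eqref{eq:poly_peak_w} (valid under A1--A6, as recorded after \eqref{eq:poly_peak_w} and underpinned by Appendix~\ref{app:poly_robust_aux}), this gives $d^*=P^*$. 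Hence it remains only to prove weak duality $p^*\le d^*$.

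Second, I would establish $p^*\le d^*$ by a direct pairing. Fix any primal-feasible $(v,\gamma,\zeta)$ for \eqref{eq:poly_peak_w_robust_sp} and any dual-feasible $(\mu_0,\mu,\mu_p,\{\nu_\ell\},\{\hat\mu_j\})$ for \eqref{eq:robust_meas}. Pairing $v$ against the robustified Liouville identity \eqref{eq:robust_meas_liou} and using the adjoint relations yields $\inp{v}{\mu_p}=\inp{v(0,\cdot)}{\mu_0}+\inp{\Lie_{f_0}v}{\mu}+\sum_\ell\inp{f_\ell\cdot\nabla_x v}{\nu_\ell}$. Pairing $\zeta_j\ge 0$ against the domination constraints \eqref{eq:robust_meas_dom}, summing over $j$, and using the column identity $\sum_j A_{j\ell}\zeta_j=(A^\top\zeta)_\ell$ gives $\inp{e^\top\zeta}{\mu}=\sum_\ell\inp{(A^\top\zeta)_\ell}{\nu_\ell}+\sum_j\inp{\zeta_j}{\hat\mu_j}$. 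Now insert the primal constraints: $\Lie_{f_0}v+e^\top\zeta\le 0$ on $[0,T]\times X$ with $\mu\ge 0$ bounds $\inp{\Lie_{f_0}v}{\mu}\le-\inp{e^\top\zeta}{\mu}$; the equality $f_\ell\cdot\nabla_x v=(A^\top\zeta)_\ell$ on $[0,T]\times X$ converts $\sum_\ell\inp{f_\ell\cdot\nabla_x v}{\nu_\ell}$ into $\sum_\ell\inp{(A^\top\zeta)_\ell}{\nu_\ell}$ (legitimate even for signed $\nu_\ell$, since the two continuous integrands coincide on $\supp{\nu_\ell}\subseteq[0,T]\times X$); and $\zeta_j,\hat\mu_j\ge 0$ gives $\sum_j\inp{\zeta_j}{\hat\mu_j}\ge 0$. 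Chaining these reductions yields $\inp{v}{\mu_p}\le\inp{v(0,\cdot)}{\mu_0}$. Finally $\gamma\ge v(0,\cdot)$ on $X_0$ together with $\inp{1}{\mu_0}=1$ gives $\inp{v(0,\cdot)}{\mu_0}\le\gamma$, and $v\ge p$ on $[0,T]\times X$ with $\mu_p\ge 0$ gives $\inp{p}{\mu_p}\le\inp{v}{\mu_p}$. Altogether $\inp{p}{\mu_p}\le\gamma$ for every admissible pair; taking the supremum over dual variables and the infimum over primal variables gives $p^*\le d^*$.

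Third, I would assemble the chain $P^*\le p^*\le d^*=P^*$, which forces equality throughout and hence strong duality. Finiteness (ruling out $\pm\infty$ degeneracies) is immediate: $P^*$ is bounded above by A(Peak), and \eqref{eq:poly_peak_w_robust_sp} is feasible --- e.g.\ take $v$ a constant dominating $p$ on $X$ together with a feasible multiplier $\zeta$, or the polynomial auxiliary function of Appendix~\ref{app:poly_robust_aux} --- while \eqref{eq:robust_meas} is feasible by the occupation-measure construction of Lemma~\ref{lem:robust_meas_upper}.

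The main obstacle is not the weak-duality computation, which is routine once the adjoint bookkeeping is arranged, but the transfer step $d^*=P^*$: this relies on Lemma~\ref{lem:lie_robust} (so that replacing the semi-infinite Lie constraint by its robust counterpart, with continuous or merely bounded multipliers $\zeta$, introduces no gap) and on the no-relaxation-gap result for peak estimation under uncertainty recalled in Appendix~\ref{app:poly_robust_aux}. One should also reconcile the index bookkeeping in \eqref{eq:robust_meas_dom}: the controlled measures entering the $j$-th domination constraint are $\{\nu_\ell\}_{\ell=1}^L$ weighted by $A_{j\ell}$ (not a single $\nu_j$), since the pairing in the weak-duality step uses exactly the column identity $\sum_j A_{j\ell}\zeta_j=(A^\top\zeta)_\ell$.
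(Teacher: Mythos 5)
Your proof is correct and follows essentially the same route as the paper: the sandwich $d^*\ge p^*\ge P^*$ combined with $d^*=P^*$ via Lemma~\ref{lem:lie_robust} and the exactness of \eqref{eq:poly_peak_w}. The only difference is that you spell out the weak-duality pairing explicitly (the paper simply cites it), and your computation — including the observation that the domination constraint \eqref{eq:robust_meas_dom} should read $\sum_\ell A_{j\ell}\nu_\ell$ — is sound.
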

\begin{proof}
The bound $d^* \geq p^*$ holds by weak duality \cite{boyd2004convex}. Lemma 
\ref{lem:robust_meas_upper} proves that $p^* \geq P^*$, together forming the chain $d^* \geq p^* \geq P^*$. Corollary \ref{cor:peak} proves that the optimal value $d^*$ from the robust \eqref{eq:poly_peak_w_robust} equals the optimal value of the non-robust \eqref{eq:poly_peak_w}, which is in turn equal to $P^*$ from \eqref{eq:peak_traj_poly} by Theorem 2.1 of \cite{lewis1980relaxation}. Since $d^*$ and $P^*$ are equal, it holds that the sandwiched $p^*$ satisfies $d^*=p^* =P^*$.
\end{proof}

\begin{lem}
\label{lem:robust_mass_bound}
Under A1-A6 and the further assumption that the polytope $W$ is compact all of the nonnegative measures in \eqref{eq:robust_meas} are bounded.
\end{lem}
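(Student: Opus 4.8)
The plan is to bound the mass (equivalently the total variation) of every nonnegative measure appearing in \eqref{eq:robust_meas} by two ingredients: (i) testing the robustified Liouville equation \eqref{eq:robust_meas_liou} against a couple of elementary $C^1$ functions, and (ii) using the domination constraints \eqref{eq:robust_meas_dom} together with compactness of the polytope $W$ to control the signed controlled measures $\nu_\ell$, whose total variation in turn bounds the slack measures $\hat\mu_j$. Throughout, set $R \doteq \sup_{w\in W}\norm{w}_\infty$, which is finite precisely because $W$ is compact, and recall $\inp{1}{\mu_0}=1$ from \eqref{eq:robust_meas_prob}. The measures $\mu,\mu_p,\{\hat\mu_j\},\mu_0$ are exactly the nonnegative ones in \eqref{eq:robust_measures} (the $\nu_\ell$ being signed), so these four families are what must be shown bounded.

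I would first dispatch $\mu_0,\mu_p,\mu$. The measure $\mu_0$ has mass $1$. Pairing \eqref{eq:robust_meas_liou} with the constant function $v\equiv 1$ annihilates the $\Lie_{f_0}^\dagger$ and $(f_\ell\cdot\nabla_x)^\dagger$ terms and yields $\inp{1}{\mu_p}=\inp{1}{\mu_0}=1$, so $\mu_p$ has mass $1$. Pairing \eqref{eq:robust_meas_liou} with $v(t,x)=T-t\ge 0$, and noting $\Lie_{f_0}(T-t)=-1$ while $(f_\ell\cdot\nabla_x)(T-t)=0$, gives $\inp{T-t}{\mu_p}=T\inp{1}{\mu_0}-\inp{1}{\mu}=T-\mu([0,T]\times X)$; nonnegativity of the left-hand side forces $\mu([0,T]\times X)\le T$.

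The substantive step bounds $\nu_\ell$ and hence $\hat\mu_j$. Fix a Borel set $E\subseteq[0,T]\times X$ and a finite measurable partition $\{E_i\}$ of $E$, and for each $i$ put $v_i\doteq(\nu_1(E_i),\dots,\nu_L(E_i))\in\R^L$. Evaluating \eqref{eq:robust_meas_dom} on $E_i$ gives $0\le\hat\mu_j(E_i)=e_j\,\mu(E_i)-\sum_{\ell=1}^L A_{j\ell}\,\nu_\ell(E_i)$ for every $j$, i.e. the vector inequality $A v_i\le e\,\mu(E_i)$. If $\mu(E_i)>0$ this says $v_i/\mu(E_i)\in W=\{w\mid Aw\le e\}$, hence $\norm{v_i}_\infty\le R\,\mu(E_i)$; if $\mu(E_i)=0$ it says $v_i$ lies in $\{v\mid Av\le 0\}$, which is the recession cone of the nonempty compact polytope $W$ and therefore equals $\{0\}$, so $v_i=0$. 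In either case $\abs{\nu_\ell(E_i)}\le R\,\mu(E_i)$; summing over $i$ and taking the supremum over partitions yields $\abs{\nu_\ell}(E)\le R\,\mu(E)$ for every Borel $E$, and in particular $\norm{\nu_\ell}_{TV}\le R\,\mu([0,T]\times X)\le RT$. Finally, nonnegativity of $\hat\mu_j$ together with $\inp{1}{\hat\mu_j}=e_j\,\mu([0,T]\times X)-\sum_\ell A_{j\ell}\,\nu_\ell([0,T]\times X)$ gives $0\le\inp{1}{\hat\mu_j}\le\bigl(\abs{e_j}+R\sum_\ell\abs{A_{j\ell}}\bigr)T<\infty$, which completes the bounds on all nonnegative measures in \eqref{eq:robust_measures}.

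The only nonroutine ingredient is the partition/recession-cone argument in the third step: one must read the measure-valued domination constraints \eqref{eq:robust_meas_dom} pointwise on partition pieces and invoke that a nonempty bounded polyhedron has trivial recession cone. This is exactly where compactness of $W$, rather than mere nonemptiness, is used, and it is the dual-measure analogue of the role compactness plays on the function side through Stiemke's alternative (Lemma \ref{lem:stiemke}); the same scheme, with the recession cone replaced by $\{v\mid \mathcal{A}(v)\in K\}=\{0\}$, would extend the statement to the general \ac{SDR} uncertainty sets of \eqref{eq:w_sdr}. Everything else is a direct test-function computation, so I expect no further difficulty.
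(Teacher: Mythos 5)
Your proof is correct and follows essentially the same route as the paper's: mass bounds for $\mu_0,\mu_p,\mu$ from the probability constraint and from testing the robustified Liouville equation with $v=1$ and $v=T-t$, then the domination constraints together with compactness of the polytope $W$ to control the $\nu_\ell$ and hence the slack measures $\hat{\mu}_j$. The only difference is in the last step: the paper works purely with total masses (observing that the vector $\inp{1}{\nu}$ lies in the $\inp{1}{\mu}$-scaled polytope $W$), whereas your partition/recession-cone argument gives the stronger setwise bound $\abs{\nu_\ell}(E)\le R\,\mu(E)$ and hence a total-variation bound on the signed $\nu_\ell$ --- more than the lemma needs, but a clean strengthening that makes explicit where compactness (trivial recession cone) enters.
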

\begin{proof}
Boundedness of a nonnegative measure will be demonstrated by showing that the measure has finite mass and it is supported on a compact set. Assumptions A1-A2 posit compactness of $[0, T] \times X$.
The probability measures are $\inp{1}{\mu_0} = 1$ (by \eqref{eq:robust_meas_prob}), $\inp{1}{\mu_p} = 1$ (by \eqref{eq:robust_meas_liou} with $v(t,x)=1$).
The relaxed occupation measure $\mu$ is bounded with $\inp{1}{\mu} \leq T$ (by \eqref{eq:robust_meas_liou} under A1). 

Applying a test function $\zeta_j=1$ to the domination constraint \eqref{eq:robust_meas_dom} leads to
\begin{align}
    e \inp{1}{\mu} = A \inp{1}{\nu} + \inp{1}{\hat{\mu}} \Rightarrow e \inp{1}{\mu} \geq A \inp{1}{\nu}.
\end{align}
where the measure pairings are vectorized for convenience. The pairings $\inp{1}{\nu}$ are members of the $\inp{1}{\mu}$-scaled compact polytope $W$, proving that the constraint-slack measures $\hat{\mu}_j$ is bounded for each $j\in 1..m$.
\end{proof}

\begin{rmk}
    The signed measures $\nu_\ell$ in \eqref{eq:robust_meas} have unbounded \ac{TV} norm. Each  signed measure $\nu_\ell \in \mathcal{M}([0, T] \times X)$ can be decomposed into nonnegative measures by a Hahn-Jordan decomposition:
    \begin{align}
    \label{eq:hahn_jordan_robust}
        \nu_\ell &= \nu_\ell^+ - \nu_\ell^-, & & \nu_\ell^+, \ \nu_\ell^- \in \Mp{[0, T] \times X} & & \forall \ell=1..L \\
        & \nu_\ell^+ \perp \nu^-_\ell & & \forall \ell=1..L.     \label{eq:hahn_jordan_robust_perp}
    \end{align}

    A direct substitution of \eqref{eq:hahn_jordan_robust} into \eqref{eq:robust_meas} will leave the \ac{TV} norm $\norm{\nu_\ell}_{TV} = \inp{1}{\nu^+_\ell+\nu^-_\ell}$ as a possibly unbounded degree of freedom, because only the mass of the difference $\inp{1}{\nu^+_\ell-\nu^-_\ell}$ is constrained in \eqref{eq:robust_meas_dom}. Under the assumption that the \ac{SDR} set $W$ is compact, the measures $\nu^+_\ell$ and $\nu^-_\ell$ may be bounded by adding new mass constraints to \eqref{eq:robust_meas}:
    \begin{subequations}
        \label{eq:robust_new_mass}
    \begin{align}
        M_\ell^+ &= \max_{w \in W, \ w_\ell \geq 0} w_\ell, & 
        \inp{1}{\nu^+_\ell} &\leq \inp{1}{\mu} M_\ell^+ & & \forall \ell=1..L \\
        M_\ell^-&= \min_{w \in W, \ w_\ell \leq 0} w_\ell,& \inp{1}{\nu^-_\ell} &\leq \inp{1}{\mu}M^-_\ell  & & \forall \ell=1..L.
    \end{align}
    \end{subequations}
    The addition of \eqref{eq:robust_new_mass} will not change the optimum value $p^*$ of \eqref{eq:robust_meas}. However, the dual of \eqref{eq:robust_meas} with constraints in \eqref{eq:robust_new_mass} will be different from \eqref{eq:poly_peak_w_robust_sp}, and will no longer feature equality constraints in \eqref{eq:poly_peak_w_robust_sp_dual}.
\end{rmk}

\begin{rmk}
    The duality results of this appendix subsection may be extended to other compact \ac{SDR} sets $W$. Special caution must be taken notationally when referring to the adjoints of affine maps (from \eqref{eq:w_sdr}) and the dual spaces of cone-valued continuous functions $C([0, T] \times X \rightarrow K_s^*)'$. 
\end{rmk}

\begin{rmk}
    The work in \cite{majumdar2014convex} performs optimal control in the unit box $W = [-1, 1]^L$, resulting in measure programs that have the form of     \eqref{eq:hahn_jordan_robust} (excluding the orthogonality constraint \eqref{eq:hahn_jordan_robust_perp}) 
    with an additional bounded-mass constraint:
    \begin{align}
        \nu^+_\ell + \nu^-_\ell + \hat{\mu}_\ell &= \mu_\ell&  \nu_\ell^+ - \nu^-_\ell &= \nu_\ell & \forall \ell=1..L.
    \end{align}
    The work in \cite{korda2015controller} rescales the dynamics to ensure that  $W = [0, 1]^L$. The measure $\nu^-_\ell$ can be set to zero in the nonnegative box case, and the problem involves only nonnegative measures with $\nu_\ell = \nu_\ell^+$ for each $\ell$.
\end{rmk}

\subsection{Recovery}

Let $(v, \zeta)$ be a degree-$2k$ solution to the \ac{SOS} program \eqref{eq:robust_lie_polytope_sos} associated with \eqref{eq:poly_peak_w_robust_sp}.
Let $Q_0$ be the solved Gram matrix associated with the Lie constraint \eqref{eq:poly_peak_w_robust_sp_lie} (\ac{SOS} constraint \eqref{eq:robust_lie_polytope_sos_crit}), and let $\sigma_\ell$ be the vector of dual variables corresponding to the equality constraint \eqref{eq:poly_peak_w_robust_sp_dual} (finite-degree \eqref{eq:robust_lie_polytope_sos_eq}). By strong duality in the hierarchy (\cite{lasserre2009moments} and extensions from \cite[Theorem 4 and Lemma 4]{korda2014convex}), the \ac{SDP} dual variable to $Q_0$ is the moment matrix $\M_d[\bbm]$ in which $\bbm$ is a moment sequence of $\mu$. Similarly, the dual variables of $\sigma_\ell$ are moment sequences $\bbm_\ell$ of a signed measure $\mu_\ell$ for each $\ell=1..L$ (because every symmetric matrix may be expressed as the difference between two \ac{PSD} matrices).
An approximate control law $\hat{w}_\ell(t, x)$ for all $\ell=1..L$ may be recovered from the degree-$\leq 2k$ moments of $\bbm$ and the degree-$\leq k$ moments of $\bbm_\ell$ (written as $\bbm_\ell^{\leq d}$) by \cite[Equation 41]{majumdar2014convex}
\begin{subequations}
\label{eq:recover_law}
\begin{align}
    \mathbf{y}_\ell &= \M_d[\bbm]^{-1} (\bbm_\ell^{\leq d}) & 
    \hat{w}_\ell(t, x) &= \sum_{(\alpha, \beta)\in \N^{L+1}, \ \abs{\alpha} + \beta  \leq d} \mathbf{y}_{\ell \alpha \beta} x^\alpha t^\beta.
\end{align}
\end{subequations}

Controllers from \eqref{eq:recover_law} will converge in an $L_1$ sense to the optimal control law by \cite[Theorem 8]{majumdar2014convex} as the degree increases. It remains an 
open problem to quantify performance indices when deploying finite-degree recovered controls on system \eqref{eq:disturbance_affine}. 


\section{Analysis and Control Linear Programs}
\label{app:analysis_control_problem}

This appendix lists infinite-dimensional \acp{LP} in auxiliary functions for the distance estimation, reachable set estimation, and \ac{ROA} maximization problems. Assumptions A1-A4 are shared in all problems.
\subsection{Distance Estimation}
The Distance Estimation program under uncertainty $w(\cdot)$ is (up to a difference in signs from \cite{miller2021distance})
\begin{subequations}
\label{eq:poly_dist_w}
\begin{align}
    d^* = & \ \inf_{v(t,x), \gamma} \quad \gamma & \\
    & \gamma \geq v(0, x)  & &  \forall x \in X_0 \\
    & \Lie_f v(t, x, w) \leq 0 & & \forall (t, x, w) \in [0, T] \times X \times W \label{eq:poly_dist_w_lie}\\
    & v(t, x) \geq \phi(x) & & \forall (t, x) \in [0, T] \times X  \label{eq:poly_dist_w_cost} \\
    & \phi(x) \geq -c(x, y) & & \forall (x, y) \in  X \times Y.  \label{eq:poly_dist_w_closest} \\
    & \phi \in C(X), \ v \in C^1([0, T] \times X).
\end{align}
\end{subequations}

The distance of closest approach is $c^* = -d^*$.

\subsection{Reachable Set Estimation}






An infinite-dimensional \ac{LP} in continuous auxiliary functions $v(t, x)$ and $w(x)$ may be developed to outer-approximate the reachable set $X_T$  from \eqref{eq:reach_traj} \cite{henrion2013convex} as in
\begin{subequations}\label{eq:reach_cont}
\begin{align}
    d^* = & \inf_{v(t, x), \phi(x)} \textstyle\int_X \phi(x) dx & \\
    & v(0, x)  \leq 0 & &  \forall x \in X_0  \\ 
    & \phi(x) + v(T, x) \geq 1 & & \forall x \in X\\
    & \Lie_f v(t,x, w) \leq 0   & & \forall (t,x, w) \in [0, T] \times X \times W
 \label{eq:reach_cont_lie} \\
    & v(t,x) \in C^1([0, T] \times X) & \\
    & \phi(x) \in C_+(X).
\end{align}
\end{subequations}

At a degree-$d$ LMI relaxation, the set $\{x\in X \mid \phi(x) \geq 1\}$ is an outer approximation to the reachable set with volume bounds yielding the bounds $d_d^* \geq d_{d+1}^* \geq P^* = \textrm{vol}(X_T)$.
This sublevel set will converge in volume to the region of attractions (excluding sets of measure zero) as $d \rightarrow \infty$. 
The superlevel set approximations will be valid in containing $X_T$, except for possibly a set with Lebesgue measure zero (e.g. points, planes).
Inner approximations to the region of attraction can be performed through the methods in \cite{korda2013inner}.

\subsection{Region of Attraction Maximization}
The \ac{LP} in functions $v, \phi$ to perform \ac{ROA} maximization is \cite{henrion2013convex}
\begin{subequations}
\label{eq:roa_cont}
\begin{align}
    d^* = & \inf \textstyle\int_X \phi(x) dx & \\
    & v(T, x)  \geq 0 & &  \forall x \in X_T  \\ 
    & \phi(x) \geq 1 + v(0, x) & &\forall x \in X\\
    &\Lie_f v(t,x, w) \leq 0 & \label{eq:roa_cont_lie} & \forall (t,x, w) \in [0, T] \times X \times W\\
    & v(t,x) \in C^1([0, T] \times X) & \\
    & \phi(x) \in C_+(X). & 
\end{align}
\end{subequations}

The roles of $t=\{0, T\}$ and some signs are swapped in \eqref{eq:roa_cont} as compared to \eqref{eq:reach_cont}.
\section{Integral Costs and Robust Counterparts}
\label{app:integral_cost}

This appendix discusses Lie nonnegativity constraints with a cost $J(t, x, w)$:
\begin{align}
    \Lie_f v(t, x, w) + J(t, x, w) &\geq 0  & \forall (t, x, w) \in [0, T] \times X \times W. \label{eq:lie_control}
\end{align}

Constraints with \eqref{eq:lie_control} appear in \acp{OCP} involving integral costs:\begin{align}
    \inf_{w} \textstyle \int_0^T J(t, x(t), w(t)) dt & & \dot{x}(t) = f(t, x(t), w(t)), \ x(0) = x_0.
\end{align}
The expression in \eqref{eq:lie_control} is an inequality relaxation \cite{lewis1980relaxation} of the Hamilton-Jacobi-Bellman constraint \cite{liberzon2011calculus}:
\begin{align}
    \min_{w \in W} \Lie_f v(t, x, w) + J(t, x, w) = 0 & & \forall (t, x) \in [0, T] \times X. 
\end{align}

Integral costs with peak estimation are discussed in Equation (5.2) of \cite{fantuzzi2020bounding}. 


This appendix will assume that Assumptions A1-A4 are active. 

Representations of \eqref{eq:lie_control} in standard robust form \eqref{eq:lin_robust} will be worked out for the specific cases of 
$L_\infty$, $L_1$, and quadratic running costs. Results will be reported as the combination of an extended \ac{SDR} uncertainty set $\tilde{W}$ (such that $\pi^w \tilde{W} = w$) and terms $(a, b)$ to form \eqref{eq:lin_robust}.



\subsection{L-infinity Running Cost}

This subsection will involve a running cost $J(t, x, w) = \norm{C w}_\infty$ for a matrix $C \in \R^{c \times L}$ with $c \geq L$ and $\rank{C}=L$. A new term $\tilde{w} \in \R$ may be  introduced to form the lifted \ac{SDR} uncertainty set
\begin{align}
    \tilde{W}_{\infty} = \{w \in W, \ \tilde{w} \in \R: \1_c \tilde{w} - Cw \geq 0, \ \1_c \tilde{w} + C w \geq 0\}. \label{eq:w_infinity_set}
\end{align}
The weighted $L_\infty$-running cost Lie term from \eqref{eq:lie_control} is
\begin{align}
    & \Lie_f v(t, x, w) + \norm{C w}_\infty \geq 0 & & \forall (t, x, w) \in [0, T] \times X \times W. \nonumber \\
     =& \Lie_f v(t, x, w) + \tilde{w} \geq 0 & & \forall (t, x, w, \tilde{w}) \in [0, T] \times X \times \tilde{W}_{\infty}.
     \label{eq:lie_control_infty}
\end{align}
The correspondence in \eqref{eq:lin_robust} for the $L_\infty$ running cost is  \eqref{eq:lie_control_infty} $\tilde{w}$ by
\begin{subequations}
\begin{align}
    b_0 &= \Lie_{f_0} v(t, x, w) & a_0 &= 0 \\
    b_\ell &= f_\ell \cdot \nabla_x v(t, x, w) & a_\ell &= 0 & & \forall \ell\in1..L \\
    b_{L+1} &= 1 & a_{L+1} &= 0.
\end{align}
\end{subequations}

\subsection{L1 Running Cost}

This subsection has a running cost of $J(t, x, w) = \norm{w}_1$, as performed by \cite{majumdar2013control}. The standard $L_1$ lift as reported in \cite{gouveia2013lifts} introduces $\tilde{w} \in \R^L$ under the constraint 
\begin{align}
    \tilde{W}_{1} = \{w \in W, \ \tilde{w} \in \R^L: \tilde{w} - w \geq 0, \ \tilde{w} + w \geq 0\}.
\end{align}

The $L_1$-running cost Lie term from \eqref{eq:lie_control} is
\begin{align}
    & \Lie_f v(t, x, w) + \norm{w}_1 \geq 0 & & \forall (t, x, w) \in [0, T] \times X \times W. \nonumber \\
     =& \Lie_f v(t, x, w) + \1_L^T \tilde{w} \geq 0 & & \forall (t, x, w, \tilde{w}) \in [0, T] \times X \times \tilde{W}_{1}.
     \label{eq:lie_control_1}
\end{align}
The correspondence in \eqref{eq:lin_robust} for the $L_1$ case \eqref{eq:lie_control_1} is 
\begin{subequations}
\begin{align}
    b_0 &= \Lie_{f_0} v(t, x, w) & a_0 &= 0 \\
    b_\ell &= f_\ell \cdot \nabla_x v(t, x, w) & a_\ell &= 0 & & \forall \ell\in1..L \\
    b_{\ell'} &= 1 & a_{\ell'} &= 0 & & \forall \ell' \in L+1..2L.
\end{align}
\end{subequations}

\subsection{Quadratic Running Cost}

This subsection will discuss the standard convex quadratic cost:
\begin{align}
    J(t,x,w) &= x^T P x + w^T R w + 2 w^T N x& P \in \psd_{+}^n, \ R \in \psd^L_{+}, \ N \in \R^{L \times n}.
\end{align}

 Let $\Xi = [Q, N^T; \; N, R]$ be a matrix with factorization $(\Xi^{1/2})^T \Xi^{1/2} = \Xi$. 
The cone description with mixed quadratic uncertainty is \cite{alizadeh2003second}
\begin{align}
    \tilde{W}_2 = \{w \in W, \tilde{w} \in \R: (\Xi^{1/2} [x; w], \tilde{w}, 1/2) \in Q^L_{r} \}. \label{eq:cone_mix_w2}
\end{align}

The quadratic-cost Lie expression from \eqref{eq:lie_control} is
\begin{subequations}
\begin{align}
    & \Lie_f v(t, x, w) + x^T P x + w^T R w + 2 w^T N x \geq 0 & & \forall (t, x, w) \in [0, T] \times X \times W \nonumber \\
     =& \Lie_f v(t, x, w) + \tilde{w} \geq 0 & & \forall (t, x, w, \tilde{w}) \in [0, T] \times X \times \tilde{W}_{2}.
     \label{eq:lie_control_quad}
\end{align}
\end{subequations}
The correspondence in \eqref{eq:lin_robust} for the mixed quadratic case case \eqref{eq:lie_control_quad} is
\begin{subequations}
\begin{align}
    b_0 &= \Lie_{f_0} v(t, x, w) & a_0 &= 0 \\
    b_\ell &= f_\ell \cdot \nabla_x v(t, x, w) & a_\ell &= 0 & & \forall \ell\in1..L \\
    b_{L+1} &= 1 & a_{L+1} &= 0.
\end{align}
\end{subequations}

\begin{lem}
Continuity and approximability (Theorem \ref{thm:multipliers_cont_poly}) is preserved in the mixed set \eqref{eq:cone_mix_w2} (after accounting for the sign changes with the strict $\Lie_f v > 0$). 
\end{lem}
\begin{proof}
Let $[\Xi^{1/2}_x, \; \Xi_w^{1/2}]$ be a column-wise partition of $\Xi^{1/2}$ corresponding to the $x$ and $w$ multiplications. The rotated  \ac{SOC} constraint in \eqref{eq:cone_mix_w2} may be expressed with parameters
\begin{subequations}
\begin{align}
    A_{\text{mix}} &= \begin{bmatrix} \Xi^{1/2}_w & 0 \\ \0_{L \times 1} & 1 \\ \0_{L \times 1} & 0\end{bmatrix} & e_{\text{mix}} &= \begin{bmatrix}  \Xi^{1/2}_x x \\0 \\1/2
    \end{bmatrix} \\
    G_{\text{mix}} &= \varnothing & K_{\text{mix}} &= Q^L_r   
\end{align}
\end{subequations}
forming the conic constraint
\begin{align}
    A_{\text{mix}}[w; \tilde{w}] + e_{\text{mix}} \in K_{\text{mix}}.
\end{align}
Now consider the assumptions in Section \ref{sec:robust_continuity}. A3' is satisfied because $e_{\text{mix}}$ is a continuous (affine) function of $x$ and does not involve $t$. A4' is also satisfied because $A_{\text{mix}}$ is constant in $(t, x)$ and $ G_{\text{mix}} = \varnothing$. Assumptions A1-A4 ensure that A1' and A2' are fulfilled, completing the proof.
\end{proof}
\end{document}